\theoremstyle{plain}
\newtheorem{thm}{\protect\theoremname}[section]
\theoremstyle{plain}
\newtheorem{conjecture}{\protect\conjecturename}[section]
\theoremstyle{plain}
\newtheorem*{conjecture*}{\protect\conjecturename}
\theoremstyle{remark}
\newtheorem*{notation*}{\protect\notationname}
\theoremstyle{plain}
\newtheorem{prop}{\protect\propositionname}[section]
\theoremstyle{remark}
\newtheorem*{rem*}{\protect\remarkname}
\theoremstyle{plain}
\newtheorem{cor}{\protect\corollaryname}[section]
\theoremstyle{plain}
\newtheorem*{fact*}{\protect\factname}
\theoremstyle{plain}
\newtheorem*{thm*}{\protect\theoremname}
\theoremstyle{plain}
\newtheorem*{lem*}{\protect\lemmaname}
\theoremstyle{definition}
\newtheorem*{example*}{\protect\examplename}
\theoremstyle{plain}
\newtheorem{lem}{\protect\lemmaname}[section]
\newcommand\cyr{%
\renewcommand\rmdefault{wncyr}%
\renewcommand\sfdefault{wncyss}%
\renewcommand\encodingdefault{OT2}%
\normalfont
\selectfont}
\DeclareTextFontCommand{\textcyr}{\cyr}
\DeclareMathOperator{\lcm}{lcm}
\DeclareMathOperator{\rank}{rank}
\DeclareMathOperator{\tors}{tors}
\def\ltriangle{\mbox{
\begin{picture}(3,4) 
\put(0,0){\line(1,0){6}} 
\put(6,0){\line(0,1){8}} 
\put(0,0){\line(3,4){6}} 
\end{picture}
}}
\newcommand{\sha}{\textrm{{\cyr SH}}}
\providecommand{\conjecturename}{Conjecture}
\providecommand{\corollaryname}{Corollary}
\providecommand{\examplename}{Example}
\providecommand{\factname}{Fact}
\providecommand{\lemmaname}{Lemma}
\providecommand{\notationname}{Notation}
\providecommand{\propositionname}{Proposition}
\providecommand{\remarkname}{Remark}
\providecommand{\theoremname}{Theorem}
\begin{document}
\title{Reflecting Numbers of Various Types, I}
\author{Ya-Qing Hu}
\address{Morningside Center of Mathematics\\
Chinese Academy of Sciences}
\email{\href{mailto:yaqinghu@amss.ac.cn}{yaqinghu@amss.ac.cn}}
\keywords{Reflecting Numbers, Diophantine equations, Congruent Numbers}
\subjclass[2010]{11D72, 11G05}
\thanks{This work is partially supported by the Postdoctoral International
Exchange Program of the China Postdoctoral Council with grant number
YJ20210319.}
\begin{abstract}
The purpose of this paper is to introduce the concept of reflecting
numbers to the realm of number theory and to classify reflecting numbers
of certain types.  For us, reflecting numbers are coming from congruent
numbers, above congruent numbers, and away from congruent numbers.

Explicitly speaking, a reflecting number of type $(k,m)$ is the average
of two distinct rational $k$th powers,  between which the distance
is twice another nonzero rational $m$th power. In particular, reflecting
numbers of type $(2,2)$ are all congruent numbers and thus will be
called reflecting congruent numbers in this paper. We can show that
all prime numbers $p\equiv5\mod8$ are reflecting congruent and in
general for any integer $k\ge0$ there are infinitely many square-free
reflecting congruent numbers in the residue class of $5$ modulo $8$
with exactly $k+1$ prime divisors. Moreover, we conjecture that all
prime congruent numbers $p\equiv1\mod8$ are reflecting congruent.
In addition, we show that there are no reflecting numbers of type
$(k,m)$ if $\gcd(k,m)\ge3$. 
\end{abstract}

\date{\today}

\maketitle
\tableofcontents{}

\section{Introduction}

Let $\mathcal{P}$ be a property about rational numbers, such as being
rational squares or cubes, and denote also by $\mathcal{P}$ the subset
of all rational numbers with this property.

A nonzero integer $n$ is called a \emph{reflecting number of type
$(\mathcal{P}_{1},\mathcal{P}_{2})$}, if there exist $u,v\in\mathcal{P}_{1}$
and $t\in\mathcal{P}_{2}\setminus\{0\}$ such that $n-t=u$ and $n+t=v$,
where $\mathcal{P}_{1}$ and $\mathcal{P}_{2}$ are two properties
about rational numbers. 

On a number line, the integer $n$ behaves like a mirror and reflects
two rational numbers $u,v$ with the same property $\mathcal{P}_{1}$
to each other, while the distance from $u$ or $v$ to $n$ is a nonzero
rational number $t$ with property $\mathcal{P}_{2}$, hence the name.
Since the meaning behind $t$ is distance, we require it to be positive.
Also, we exclude $n=0$ from the definition for it is in general less
interesting or can be easily dealt with.

In this work, we consider the property $\mathcal{P}(i)=\{x^{i}:x\in\mathbb{Q}\}$
consisting of rational $i$th powers. For any ordered pair of positive
integers $(k,m)$, reflecting numbers of type $(\mathcal{P}(k),\mathcal{P}(m))$
are also called $(k,m)$-reflecting numbers for short. So a \emph{$(k,m)$-reflecting
number} is the average of two distinct rational $k$th powers, between
which the distance is twice another nonzero rational $m$th power. 

A basic problem is to classify reflecting numbers $n$ of various
types $(k,m)$ and for each such number $n$ find all positive rational
numbers $t$ as in the definition. For example, if $k=1$ then $n\pm t^{m}$
are always rational numbers for any $t\in\mathbb{Q}^{*}$ and thus
all nonzero integers are $(1,m)$-reflecting for all natural number
$m$. 

Since $2n=v^{k}+u^{k}$ and $2t^{m}=v^{k}-u^{k}$, we observe that: 

1) If $k$ is odd, then $n$ is $(k,m)$-reflecting if and only if
$-n$ is. 

2) If $k$ is even, then negative integers cannot be $(k,m)$-reflecting. 

3) If $n$ is $(k,m)$-reflecting, so is $nd^{\lcm(k,m)}$ for any
positive integer $d$. So it suffices to study the \emph{primitive
$(k,m)$-reflecting numbers}, i.e., the ones that are positive and
free of $\lcm(k,m)$th power divisors. Denote by $\mathscr{R}(k,m)$
the set of all $(k,m)$-reflecting numbers and by $\mathscr{R}'(k,m)$
the subset of all primitive ones. 

4) If $n$ is $(k',m')$-reflecting, then it is $(k,m)$-reflecting
for all $k\mid k'$ and $m\mid m'$. So we have a filtration of reflecting
numbers of various types $(k,m)$, i.e,
\[
\mathscr{R}(k,m)\supset\mathscr{R}(k',m'),\ \forall k\mid k',\ \forall m\mid m'.
\]
In particular, if $\mathscr{R}(k,m)=\emptyset$ then $\mathscr{R}(k',m')=\emptyset$.

5) The set of $(k,m)$-reflecting numbers is nonempty if and only
if the ternary Diophantine equation $v^{k}-u^{k}=2t^{m}$ has a rational
solution such that $v^{k}\ne\pm u^{k}$.

6) Reflecting numbers of type $(k,1)$ are those nonzero integers,
whose double can be written as sums of two distinct rational $k$th
powers. 

The motivation comes from a sudden insight into the definition of
congruent numbers: a positive integer $n$ is call a \emph{congruent
number}, if it is the area of a right triangle with rational sides,
or equivalently, if it is the common difference of an arithmetic progression
of three rational squares, i.e., if there exists a positive rational
number $t$ such that $t^{2}\pm n$ are both rational squares. But
what happens if $t^{2}$ and $n$ are swapped in the latter definition?
Well, if there exists a positive rational number $t$ such that $n\pm t^{2}$
are rational squares, then $n$ is by our definition a\emph{ $(2,2)$-}reflecting
number, which turns out to be a congruent number. 

To emphasize that $(2,2)$-reflecting numbers are in fact congruent,
such numbers are also called \emph{reflecting congruent numbers}.
For example, $5$ is $(2,2)$-reflecting since $5-2^{2}=1^{2}$ and
$5+2^{2}=3^{2}$, and congruent since $(41/12)^{2}-5=(31/12)^{2}$
and $(41/12)^{2}+5=(49/12)^{2}$. But there exist congruent numbers,
such as $6$ and $7$, that are not reflecting congruent. Hence, we
have a dichotomy of congruent numbers, according to whether they are
reflecting congruent or not.

In this work, we focus on the primitive reflecting congruent numbers.
It is easy to see that such numbers can only have prime divisors congruent
to $1$ modulo $4$. Among all such numbers, the prime ones are certainly
interesting. Heegner \cite{Heegner1952} asserts without proof that
prime numbers in the residue class $5$ modulo $8$ are congruent
numbers. This result is repeated by Stephens \cite{Stephens1975}
and finally proved by Monsky \cite{Monsky1990}. But we can say more
about these numbers. 
\begin{thm}
\label{thm:p5mod8} Prime numbers $p\equiv5\mod8$ are reflecting
congruent.
\end{thm}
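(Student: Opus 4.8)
The plan is to turn the statement into the existence of a rational point on a genus-one curve, to recognise that curve as a $2$-covering of the congruent number elliptic curve $E_{p}\colon Y^{2}=X^{3}-p^{2}X$, and then to feed in the arithmetic of $E_{p}$ for $p\equiv5\bmod8$ due to Heegner and Monsky. From $p-t^{2}=u^{2}$ we get $u^{2}+t^{2}=p$, and from $p+t^{2}=v^{2}$ we get $(v-t)(v+t)=p$. Writing $v-t=p\lambda$, $v+t=\lambda^{-1}$ with $\lambda\in\mathbb{Q}^{*}$ (so $2t=\lambda^{-1}-p\lambda$), the relation $u^{2}+t^{2}=p$ becomes $(2\lambda u)^{2}=8-(p\lambda^{2}-3)^{2}$; putting $\mu=p\lambda^{2}-3$ this is the pair of conditions $\mu^{2}+(\text{a square})=8$ and $\mu+3\in p\cdot(\mathbb{Q}^{*})^{2}$. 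Parametrising the conic $\mu^{2}+w^{2}=8$ through $(2,2)$ by $x\in\mathbb{P}^{1}(\mathbb{Q})$ yields $\mu+3=(5x^{2}-4x+1)/(x^{2}+1)$, so $p$ is reflecting congruent if and only if the curve
\[
\mathcal{C}_{p}\colon\quad y^{2}=p\,(5x^{2}-4x+1)(x^{2}+1)
\]
has a rational point; such a point automatically has $y\neq0$ (both quadratic factors are irreducible over $\mathbb{Q}$) and forces $t\neq0$ (as $p$ is not a square), and conversely it yields explicit $t,u,v$. For instance $x=-1$ handles $p=5$ (recovering $t=2$), whereas already $p=13$ needs $x=3/4$, giving $t=6/5$, so the solution is genuinely non-obvious in general.

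Next I would identify $\mathcal{C}_{p}$. Since $5x^{2}-4x+1=N_{\mathbb{Q}(i)/\mathbb{Q}}((2-i)x-1)$ and $x^{2}+1=N_{\mathbb{Q}(i)/\mathbb{Q}}(x-i)$, the curve $\mathcal{C}_{p}$ is the $2$-covering of $E_{p}$ coming from the complex multiplication of $E_{p}$ by $\mathbb{Z}[i]$; concretely, the classical invariants of the quartic $p(5x^{4}-4x^{3}+6x^{2}-4x+1)$ are $I=48p^{2}$ and $J=0$, so the Jacobian of $\mathcal{C}_{p}$ is $Y^{2}=X^{3}-1296\,p^{2}X$, which becomes $E_{p}$ after $X\mapsto36X$, $Y\mapsto216Y$. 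In particular the class of $\mathcal{C}_{p}$ in $H^{1}(\mathbb{Q},E_{p})$ is killed by $2$.

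Then I would verify that $\mathcal{C}_{p}$ is everywhere locally soluble, which is where the congruence enters. Over $\mathbb{R}$ and over $\mathbb{Q}_{\ell}$ with $\ell\nmid2p$ this is routine (the quartic is positive definite, and away from $2p$ the curve has good reduction, hence an $\mathbb{F}_{\ell}$-point that lifts by Hensel); over $\mathbb{Q}_{p}$ it is immediate since $p\equiv1\bmod4$ puts the roots of $x^{2}+1$ in $\mathbb{Q}_{p}$, giving $\mathbb{Q}_{p}$-points with $y=0$. Over $\mathbb{Q}_{2}$ I would take $x=-1$: then $y^{2}=20p=2^{2}\cdot5p$, and $5p\equiv25\equiv1\bmod8$ because $p\equiv5\bmod8$, so $5p$ — hence $20p$ — is a square in $\mathbb{Q}_{2}$. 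Thus $\mathcal{C}_{p}$ is everywhere locally soluble, and consequently its class lies in $\sha(E_{p})[2]$.

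Finally I would invoke the arithmetic of the congruent number curve for $p\equiv5\bmod8$: Heegner's mock Heegner point construction, completed by Monsky \cite{Monsky1990}, gives $\rank E_{p}(\mathbb{Q})\geq1$, while the classical $2$-descent on $E_{p}$ gives $\dim_{\mathbb{F}_{2}}\Sel^{(2)}(E_{p})=3$; since $E_{p}(\mathbb{Q})_{\mathrm{tors}}=E_{p}[2]\cong(\mathbb{Z}/2\mathbb{Z})^{2}$ and $E_{p}$ has no rational point of order $4$, the exact sequence $0\to E_{p}(\mathbb{Q})/2E_{p}(\mathbb{Q})\to\Sel^{(2)}(E_{p})\to\sha(E_{p})[2]\to0$ forces $\rank E_{p}(\mathbb{Q})=1$ and $\sha(E_{p})[2]=0$. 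Hence the class of $\mathcal{C}_{p}$ vanishes, $\mathcal{C}_{p}(\mathbb{Q})\neq\emptyset$, and unwinding the substitutions exhibits $p$ as reflecting congruent. The deep step is this last one — the vanishing of $\sha(E_{p})[2]$ — which is exactly why the theorem is confined to $p\equiv5\bmod8$ rather than all $p\equiv1\bmod4$: for $p\equiv1\bmod8$ the $2$-Selmer dimension can jump, the rank can be $2$, and $\sha(E_{p})[2]=0$ is not known. A more self-contained alternative I would keep in reserve is to bypass $\sha$ by computing the image of an explicit mock Heegner point under the $2$-descent map $E_{p}(\mathbb{Q})\to(\mathbb{Q}^{*}/(\mathbb{Q}^{*})^{2})^{2}$ and checking directly that it coincides with the descent class of $\mathcal{C}_{p}$; pinning down that class and matching the two would be the bulk of the work in that approach.
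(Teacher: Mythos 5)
Your proof is correct, and its decisive final step is exactly the paper's: Monsky's theorem gives $\rank E_{p}(\mathbb{Q})\ge1$, hence $\dim_{\mathbb{F}_{2}}E_{p}(\mathbb{Q})/2E_{p}(\mathbb{Q})\ge3$, while the $2$-Selmer group has $\mathbb{F}_{2}$-dimension $3$ for $p\equiv5\bmod8$, forcing $\sha(E_{p}/\mathbb{Q})[2]=0$ and hence a global point on your everywhere-locally-soluble $2$-covering. Where you differ is the front end of the descent. The paper works with the complete $2$-descent map $\kappa:E_{n}(\mathbb{Q})/2E_{n}(\mathbb{Q})\to\mathbb{Q}(S,2)^{2}$ and shows that $n$ is reflecting congruent iff $(1,-1)$ lies in $\Ima\kappa$, the relevant homogeneous space being the intersection of two quadrics $C_{(1,-1)}:n=y_{1}^{2}+y_{2}^{2},\ 2n=y_{1}^{2}+y_{3}^{2}$, whose local solubility is proved by sum-of-two-squares arguments valid for every square-free $n$ with all prime divisors $\equiv1\bmod4$ (this generality is reused for the Tian-type theorems later). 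You instead eliminate variables by parametrising a conic and land on a binary quartic model $\mathcal{C}_{p}:y^{2}=p(5x^{2}-4x+1)(x^{2}+1)$, identify its Jacobian as $E_{p}$ via the invariants $I=48p^{2}$, $J=0$, and check local points on that model directly; this is a perfectly legitimate alternative packaging of the same $2$-covering, and arguably more concrete. Two small caveats: you quote $\dim_{\mathbb{F}_{2}}\Sel^{(2)}(E_{p})=3$ as classical where the paper proves it (its Lemma 3.8), so a reference or the short local computation should be supplied; and your remark that the congruence $p\equiv5\bmod8$ enters at the $2$-adic solubility check is a little misleading --- the covering is everywhere locally soluble for $p\equiv1\bmod8$ as well (your point $x=-1$ just fails there and another $x$ is needed), and the congruence genuinely matters only in the Selmer dimension, as you yourself note at the end.
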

Most prime numbers congruent to $1$ modulo $8$ are not congruent.
But if they are congruent, then we have the following theorem and
conjecture.
\begin{thm}
\label{thm:p1mod8} Let $p$ be a prime congruent number $\equiv1\mod8$.
If $E_{p}/\mathbb{Q}$ has rank $2$ (or equivalently $\sha(E_{p}/\mathbb{Q})[2]$
is trivial), then $p$ is reflecting congruent.
\end{thm}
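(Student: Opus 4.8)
The plan is to translate ``$p$ is reflecting congruent'' into the solvability of an explicit genus-one curve $C_p$, to identify $C_p$ as a $2$-covering of the congruent-number curve $E_p\colon y^2=x^3-p^2x$, and then to use the hypothesis to force the class of $C_p$ in $\sha(E_p/\mathbb{Q})[2]$ to vanish. For the first step: $p$ is reflecting congruent if and only if there are $\alpha,\beta,\tau\in\mathbb{Q}$ with $\tau\neq0$, $p-\tau^{2}=\alpha^{2}$, $p+\tau^{2}=\beta^{2}$; dividing by $\tau^{2}$ and writing $u=\alpha/\tau$, $w=\beta/\tau$ gives $w^{2}-u^{2}=2$ and $p/\tau^{2}=u^{2}+1$, and parametrising the conic $w^{2}-u^{2}=2$ by $w=(\mu^{2}+2)/(2\mu)$, $u=(\mu^{2}-2)/(2\mu)$ yields $p/\tau^{2}=(\mu^{4}+4)/(4\mu^{2})$, so $\tau^{2}=4p\mu^{2}/(\mu^{4}+4)$, which is a rational square exactly when $p(\mu^{4}+4)$ is. Clearing denominators, $p$ is reflecting congruent if and only if the smooth projective genus-one curve
\[
C_p\colon\qquad W^{2}=p(\mu^{4}+4)
\]
has a rational point; and since $p$ is not a square and $-4$ is not a fourth power in $\mathbb{Q}$, such a point automatically has $\mu\in\mathbb{Q}^{\times}$ and $W\neq0$, so it really does produce an admissible $\tau$.

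Next I would compute $\mathrm{Jac}(C_p)$. The binary quartic $p\mu^{4}+4p$ has invariants $I=48p^{2}$, $J=0$, so its Jacobian is $Y^{2}=X^{3}-27\cdot 48p^{2}X=X^{3}-1296p^{2}X$, which becomes $E_p$ after $X=36x$, $Y=216y$. Moreover $C_p$, being presented by a quartic model, carries a $\mathbb{Q}$-rational divisor class of degree $2$ (a fibre of the function $\mu$), so its class $[C_p]\in H^{1}(\mathbb{Q},E_p)$ is killed by $2$; that is, $C_p$ is a $2$-covering of $E_p$.

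I would then verify that $C_p$ is everywhere locally soluble. Over $\mathbb{R}$ this is clear since $p(\mu^{4}+4)>0$. At a prime $\ell\nmid 2p$ the curve $C_p$ has good reduction, so it reduces to a smooth genus-one curve over $\mathbb{F}_\ell$, which has an $\mathbb{F}_\ell$-point by the Hasse bound, and this lifts by Hensel's lemma. Over $\mathbb{Q}_2$, the congruence $p\equiv1\bmod 8$ makes $p$ a square in $\mathbb{Q}_2$, so $\mu=0$ gives $W^{2}=4p$, which is solvable. Over $\mathbb{Q}_p$, since $p\equiv1\bmod 4$ the element $-4=(1+i)^{4}$ is a fourth power modulo $p$, so Hensel's lemma produces $\mu\in\mathbb{Z}_p$ with $\mu^{4}=p-4$, and then $p(\mu^{4}+4)=p^{2}$. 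Hence $C_p$ is everywhere locally soluble, and therefore $[C_p]$ lies in $\sha(E_p/\mathbb{Q})[2]$.

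To conclude: by hypothesis $E_p/\mathbb{Q}$ has rank $2$, equivalently $\sha(E_p/\mathbb{Q})[2]=0$, so $[C_p]=0$, hence $C_p(\mathbb{Q})\neq\emptyset$, and by the first step $p$ is reflecting congruent. I expect the main obstacle to be the middle portion: arriving at the precise curve $C_p$, confirming that its Jacobian is exactly $E_p$ rather than some quartic twist of it, and carrying out the local analysis --- in particular the $2$-adic solubility, which is the one place where the sharper congruence $p\equiv1\bmod 8$ (and not merely $p\equiv1\bmod 4$) is genuinely used. After that, the passage from $\sha(E_p/\mathbb{Q})[2]=0$ to a rational point on $C_p$, and hence to the reflecting-congruent property, is formal.
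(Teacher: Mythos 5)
Your proof is correct, and it reaches the same descent-theoretic core as the paper's argument while packaging it differently. The paper runs a complete $2$-descent on $E_p$: the reflecting condition $p\pm t^{2}\in\mathbb{Q}^{2}$ is shown (Theorem \ref{thm:criterionby2descent}) to be equivalent to the pair $(1,-1)$ lying in the image of the descent map $\kappa$, i.e.\ to the homogeneous space $C_{(1,-1)}$ (an intersection of two quadrics) having a rational point; Lemma \ref{lem:localsol} proves $C_{(1,-1)}$ is everywhere locally soluble using representations of $p$ as a sum of two squares, and triviality of $\sha(E_{p}/\mathbb{Q})[2]$ then forces a global point. You instead extract a single quartic $2$-covering $W^{2}=p(\mu^{4}+4)$ directly from the reflecting condition, identify its Jacobian as $E_{p}$ via the invariants $I,J$ of the binary quartic, and verify local solubility by exhibiting explicit local points ($\mu=0$ at $2$ using $p\equiv1\bmod 8$; $\mu^{4}=p-4$ at $p$ using $-4=(1+i)^{4}$; Hasse--Weil plus Hensel elsewhere). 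The two coverings are birational over $\mathbb{Q}$ --- both parametrize solutions of $p-\tau^{2}=\alpha^{2}$, $p+\tau^{2}=\beta^{2}$ --- so they represent the same Selmer element; your route trades the $\kappa$-machinery for the invariant-theoretic identification of the Jacobian, and your local computations are a clean alternative to Lemma \ref{lem:localsol}. One small caveat: as written you prove the theorem under the hypothesis that $\sha(E_{p}/\mathbb{Q})[2]$ is trivial, but you do not justify the asserted equivalence with $\rank E_{p}(\mathbb{Q})=2$; for that one still needs $\dim_{\mathbb{F}_{2}}S^{(2)}(E_{p}/\mathbb{Q})=4$ for congruent $p\equiv1\bmod 8$ (the paper's Lemma \ref{lem:2Selmer(p1mod4)}), which yields $\rank E_{p}(\mathbb{Q})+\dim_{\mathbb{F}_{2}}\sha(E_{p}/\mathbb{Q})[2]=2$.
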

\begin{conjecture}
\label{conj:p1mod8} Prime congruent numbers $p\equiv1\mod8$ are
reflecting congruent.
\end{conjecture}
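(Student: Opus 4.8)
The plan is to deduce the conjecture from Theorem~\ref{thm:p1mod8}. By that theorem it is enough to prove that for \emph{every} prime congruent number $p\equiv1\pmod{8}$ one has $\rank E_{p}(\mathbb{Q})=2$, equivalently $\sha(E_{p}/\mathbb{Q})[2]=0$, where $E_{p}\colon y^{2}=x^{3}-p^{2}x$. Two facts are free here. First, $p$ being a congruent number forces $\rank E_{p}(\mathbb{Q})\ge1$. Second, for $p\equiv1\pmod{8}$ the root number of $E_{p}$ is $+1$, so the analytic rank is even and, granting the parity conjecture for $E_{p}$ (available because $E_{p}$ carries a rational $2$-isogeny), the Mordell--Weil rank is even, hence $\ge2$. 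Feeding these into the descent exact sequence $0\to E_{p}(\mathbb{Q})/2E_{p}(\mathbb{Q})\to\Sel_{2}(E_{p})\to\sha(E_{p}/\mathbb{Q})[2]\to0$ together with $\dim_{\mathbb{F}_{2}}E_{p}(\mathbb{Q})/2E_{p}(\mathbb{Q})=\rank E_{p}(\mathbb{Q})+2$, the whole content of the conjecture boils down to the single Selmer bound
\[
\dim_{\mathbb{F}_{2}}\Sel_{2}(E_{p})=4\qquad\text{for all prime congruent numbers }p\equiv1\pmod{8}.
\]

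To get a handle on this I would run the full $2$-descent on $E_{p}$, equivalently the two $2$-isogeny descents attached to $\phi\colon E_{p}\to E'_{p}$, $E'_{p}\colon y^{2}=x^{3}+4p^{2}x$, and its dual $\hat{\phi}$. For $n=p$ prime the local solvability of the homogeneous spaces over $\mathbb{R}$, $\mathbb{Q}_{2}$ and $\mathbb{Q}_{p}$ collapses to congruence conditions modulo $8$ and to the value of the rational biquadratic residue symbol $\bigl(\tfrac{2}{p}\bigr)_{4}$. The expected picture is that $\dim_{\mathbb{F}_{2}}\Sel_{2}(E_{p})=4$ in the ``generic'' cases, but that one borderline class $\delta\in\Sel_{2}(E_{p})$ --- precisely the class of the reflecting-congruent torsor $\{\,p-t^{2}=\square,\ p+t^{2}=\square\,\}$ --- can survive the local tests whenever $p$ is congruent; this is consistent with Theorem~\ref{thm:p5mod8}, where the analogous class for $p\equiv5\pmod{8}$ is accounted for by a genuine rational point. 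Once the descent is worked out, the conjecture becomes the assertion that $\delta\notin\sha(E_{p}/\mathbb{Q})[2]$, i.e.\ that $\delta$ is represented by an honest point of $E_{p}(\mathbb{Q})$.

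I expect the vanishing of $\sha(E_{p}/\mathbb{Q})[2]$ --- the visibility of $\delta$ --- to be the real obstacle, and for a structural reason: since the analytic rank of $E_{p}$ is \emph{even} when $p\equiv1\pmod{8}$, it is never equal to $1$, so the basic Heegner point on $E_{p}$ is torsion and produces nothing over $\mathbb{Q}$; this is exactly why Monsky's argument behind Theorem~\ref{thm:p5mod8} (where the analytic rank is $1$) does not carry over. The natural substitutes all bottom out in a nonvanishing-and-integrality input that is not currently available uniformly in $p$: (i) a Kolyvagin Euler-system bound on $\#\sha(E_{p})$ from Heegner points over a well-chosen imaginary quadratic field $K$ in which $p$ splits and with $\mathrm{ord}_{s=1}L(E_{p}/K,s)=1$, followed by descent to $\mathbb{Q}$ --- the delicate part being $2$-adic control, squeezing out $\sha[2]=0$ rather than merely finiteness of $\sha$ up to controlled odd part; (ii) a Waldspurger/Tunnell-type analysis tying $\dim_{\mathbb{F}_{2}}\Sel_{2}(E_{p})$ to a theta coefficient or period and proving the requisite nonvanishing; or (iii) a main conjecture of Iwasawa theory (cyclotomic via Kato, or anticyclotomic) made sharp at the prime $2$. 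My plan would be to combine the explicit $2$-descent of the previous step with route (ii) or (iii), treating the uniform $2$-adic nonvanishing as the crux.
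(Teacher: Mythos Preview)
Your reduction is exactly the one the paper carries out: using Theorem~\ref{thm:p1mod8} together with the Selmer computation of Lemma~\ref{lem:2Selmer(p1mod4)} (which already gives $\dim_{\mathbb{F}_{2}}\Sel_{2}(E_{p})=4$ for $p\equiv1\pmod 8$, so your proposed descent is not needed), one gets $\rank E_{p}(\mathbb{Q})+\dim_{\mathbb{F}_{2}}\sha(E_{p}/\mathbb{Q})[2]=2$, and the conjecture becomes the assertion $\sha(E_{p}/\mathbb{Q})[2]=0$. The paper stops there: it states explicitly that this follows from \emph{either} the parity conjecture \emph{or} finiteness of $\sha$, and leaves Conjecture~\ref{conj:p1mod8} open.

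The genuine gap in your proposal is the parenthetical claim that the parity conjecture is ``available because $E_{p}$ carries a rational $2$-isogeny.'' What is known unconditionally (Birch--Stephens, Monsky, Dokchitser--Dokchitser) is the $2$-\emph{Selmer} parity: the parity of $\dim_{\mathbb{F}_{2}}\Sel_{2}(E_{p})$, or equivalently of the corank of $\Sel_{2^{\infty}}(E_{p})$, agrees with the root number. That is already visible in the equality $\dim_{\mathbb{F}_{2}}\Sel_{2}(E_{p})=4$ and adds nothing. Passing from Selmer parity to Mordell--Weil parity costs exactly $\dim_{\mathbb{F}_{2}}\sha(E_{p}/\mathbb{Q})[2]\pmod 2$, which is precisely the quantity you are trying to pin down; so the argument is circular. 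Your later paragraphs implicitly acknowledge this, correctly identifying the vanishing of $\sha[2]$ as the crux and sketching Heegner/Kolyvagin, Waldspurger, and Iwasawa-theoretic routes; these are reasonable directions but, as you yourself note, none currently yields the needed uniform $2$-adic control. This matches the paper's own stance: it suggests that constructing a rational point on $E_{p}$ landing in $(1,-1)\kappa(E_{p}[2])$ would be the honest way forward, and leaves that open.
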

This conjecture follows easily from our criterion of reflecting congruent
numbers and any one of the parity conjecture for Mordell-Weil group
and the finiteness conjecture for Shafarevich-Tate group. 
\begin{conjecture*}
For any elliptic curve $E$ defined over a number field $K$, one
has $(-1)^{\rank(E/K)}=w(E/K)$, where $w(E/K)$ is the global root
number of $E$ over $K$.
\end{conjecture*}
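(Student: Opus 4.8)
The plan is to attack the parity conjecture through its $p$-adic avatar, the $p$-parity conjecture, since that is the form in which the statement is actually accessible; the passage back to the Mordell--Weil rank is then governed by a single, well-identified input. Fix a prime $p$. The $p^{\infty}$-Selmer group sits in the descent exact sequence
\[
0 \to E(K)\otimes \mathbb{Q}_p/\mathbb{Z}_p \to \Sel_{p^{\infty}}(E/K) \to \sha(E/K)[p^{\infty}] \to 0,
\]
so its $\mathbb{Z}_p$-corank is $s_p = \rank(E/K) + \operatorname{corank}_{\mathbb{Z}_p}\sha(E/K)[p^{\infty}]$. If $\sha(E/K)$ is finite the correction term vanishes and $s_p = \rank(E/K)$; thus it suffices to establish the $p$-parity identity $(-1)^{s_p} = w(E/K)$ for a single prime $p$.

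The guiding principle for that identity is that both sides factor through local data. The global root number decomposes as $w(E/K) = \prod_v w(E/K_v)$ over all places of $K$, each local factor being $\pm 1$ and computable from the reduction type of $E$ at $v$ (and from the Weil--Deligne representation at $v \mid p\infty$), with $w(E/K_v) = -1$ at every archimedean place. I would produce a matching factorization of $(-1)^{s_p}$ and compare term by term. Two complementary routes make this precise. The Iwasawa-theoretic route, following Nekov\'a\v{r}, uses Poitou--Tate global duality together with the skew-Hermitian structure of a Selmer complex to pin down $s_p \bmod 2$ as a sum of local invariants; this is cleanest when $E$ is potentially ordinary at the primes above $p$. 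The regulator-constant route, following Dokchitser--Dokchitser, introduces an auxiliary Galois extension $F/K$ with $G = \Gal(F/K)$ and a Brauer relation $\Theta = \sum_i n_i[G/H_i]$ in $R_{\mathbb{Q}}(G)$; the associated regulator constant $\mathcal{C}_\Theta \in \mathbb{Q}^{*}/(\mathbb{Q}^{*})^2$ controls $\sum_i n_i\, s_p(E/F^{H_i})$ modulo $2$, while its factorization over places reproduces the corresponding product of local root numbers. Choosing enough relations isolates the single term $(-1)^{s_p(E/K)}$.

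With the factorization in hand, the remaining work is a place-by-place verification that each local Selmer contribution equals $w(E/K_v)$. This is a finite case analysis organized by reduction type---good, split or nonsplit multiplicative, additive and potentially good---in which Tamagawa numbers and component groups enter at the bad primes and the local Galois representation enters at $v \mid p$; the archimedean contributions match the value $-1$ recorded above. None of these local computations is conceptually deep, but they must be carried out uniformly enough that the two global factorizations can be identified.

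The main obstacle is not any of the above but the reduction performed in the first step: everything the method delivers is control of the $p^{\infty}$-Selmer corank $s_p$, never of $\rank(E/K)$ directly. The two differ by $\operatorname{corank}_{\mathbb{Z}_p}\sha(E/K)[p^{\infty}]$, the corank of the divisible part of the Shafarevich--Tate group, which is removed only by the finiteness conjecture for $\sha(E/K)$. This is precisely why the statement remains a conjecture rather than a theorem: the $p$-parity half is known in wide generality, but the finiteness of $\sha$---equivalently, the triviality of its divisible part---is open, and it is exactly the input the excerpt lists alongside the parity conjecture as sufficient for Conjecture~\ref{conj:p1mod8}. A secondary technical difficulty is extending the $p$-parity input past the potentially ordinary case, where supersingular primes above $p$ require the more delicate parts of both routes.
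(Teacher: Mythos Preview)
The statement you were asked to address is not a theorem in the paper but an unproved \emph{conjecture}: the parity conjecture for elliptic curves over number fields. The paper does not prove it and does not attempt to; it merely records the statement and uses it (together with the finiteness conjecture for $\sha$) as a hypothetical input to deduce Conjecture~\ref{conj:p1mod8}. So there is no ``paper's own proof'' to compare against.

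Your write-up is therefore not a proof of the statement, and to your credit you say so yourself in the final paragraph: everything you sketch controls the $p^{\infty}$-Selmer corank $s_p$, and the passage from $s_p$ to $\rank(E/K)$ is precisely the finiteness of $\sha(E/K)[p^{\infty}]$, which is open. What you have written is an accurate summary of the known landscape around $p$-parity (Nekov\'a\v{r}'s Selmer complexes, the Dokchitser--Dokchitser regulator-constant method, the local case analysis), together with a correct identification of the obstruction. That is a reasonable discussion, but it should be labelled as such and not as a proof; the statement remains conjectural, exactly as the paper presents it.
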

For any square-free congruent number $n$ and the associated elliptic
curves $E_{n}:y^{2}=x^{3}-n^{2}x$ over $\mathbb{Q}$, the global
root number is given by the following formula
\begin{equation}
w(E_{n}/\mathbb{Q})=\begin{cases}
+1, & \text{if }n\equiv1,2,3\mod8,\\
-1, & \text{if }n\equiv5,6,7\mod8.
\end{cases}\label{eq:root number}
\end{equation}

\begin{conjecture*}
For any elliptic curve $E$ defined over a number field $K$, the
Shafarevich-Tate group $\sha(E/K)$ is finite.
\end{conjecture*}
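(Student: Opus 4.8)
The plan is to establish finiteness of $\sha(E/K)$ one prime at a time: since $\sha(E/K)$ is a torsion abelian group (every element of the Weil--Ch\^atelet group $H^1(K,E)$ is killed by a restriction--corestriction argument over a splitting field), it suffices to show that the $p$-primary part $\sha(E/K)[p^\infty]$ is finite for every prime $p$ and vanishes for all but finitely many $p$. The finiteness of each $p$-part is the crux, and I would attack it through $p$-descent. For each $p$ one has the exact sequence
\[
0 \to E(K)/p^n E(K) \to \Sel_{p^n}(E/K) \to \sha(E/K)[p^n] \to 0,
\]
so controlling $\sha(E/K)[p^\infty]$ amounts to bounding the growth of the Selmer groups $\Sel_{p^n}(E/K)$ against the free rank of $E(K)$. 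The first step is therefore to produce an upper bound on the corank of the $p^\infty$-Selmer group and to match it exactly with $\rank(E/K)$; any gap between the two is precisely a positive-corank contribution to $\sha$.

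The natural machinery for such a bound is Iwasawa theory. I would pass to the cyclotomic $\mathbb{Z}_p$-extension $K_\infty/K$, form the Pontryagin dual $X$ of the $p^\infty$-Selmer group over $K_\infty$, and show it is a finitely generated torsion module over the Iwasawa algebra $\Lambda=\mathbb{Z}_p[[\Gal(K_\infty/K)]]$. A Mazur-type control theorem then descends its characteristic ideal back to the base field, where it governs $\sha(E/K)[p^\infty]$, and finiteness follows once one knows the relevant characteristic power series does not vanish at the trivial character. The Iwasawa Main Conjecture identifies this characteristic ideal with a $p$-adic $L$-function, and the missing analytic input is an Euler system bounding the Selmer corank from above: for $K=\mathbb{Q}$ this is supplied by Kato's system of Beilinson--Flach elements, and for imaginary quadratic fields of analytic rank at most one by Kolyvagin's Euler system of Heegner points. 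In those settings the nonvanishing of the $p$-adic $L$-value is read off from nonvanishing of the complex $L$-function, completing the argument for analytic rank $\le 1$ over $\mathbb{Q}$.

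The main obstacle---and the reason this statement remains a conjecture rather than a theorem---is everything outside the rank-zero-or-one range. For $E/K$ of rank $\ge 2$ no Euler system is known that bounds the Selmer corank, so the upper bound matching $\rank(E/K)$ is unavailable and the characteristic power series cannot be shown to be nonzero; over a general number field $K$ one additionally lacks both Heegner-type points and a proven Main Conjecture, and even the torsion-ness of $X$ over $\Lambda$ is open. In effect, finiteness of $\sha(E/K)$ in full generality is entangled with the Birch--Swinnerton-Dyer conjecture and with the still-missing construction of higher-rank Euler systems, so no unconditional proof is presently within reach. For the purposes of the present paper the statement is therefore invoked only as a hypothesis, to be combined with the parity conjecture and our criterion for reflecting congruence in deriving Conjecture~\ref{conj:p1mod8}.
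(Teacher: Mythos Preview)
The statement you were asked to prove is labeled in the paper as a \emph{Conjecture}, not a theorem; the paper gives no proof of it whatsoever, and simply quotes it as one of two standard hypotheses (alongside the parity conjecture) either of which would imply Conjecture~\ref{conj:p1mod8}. So there is no ``paper's own proof'' to compare against.

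Your proposal is not a proof either, and to your credit you say so explicitly in the final paragraph: the Euler-system/Iwasawa-theoretic machinery you outline only yields finiteness of $\sha(E/K)[p^\infty]$ in the analytic-rank $\le 1$ regime over $\mathbb{Q}$ (Kolyvagin, Kato), and breaks down for higher rank or general $K$. What you have written is a reasonable survey of the known partial results and the obstructions, but the opening two paragraphs are framed as if a proof were forthcoming, which is misleading. The honest statement is simply that the finiteness of $\sha(E/K)$ is the Shafarevich--Tate conjecture, open in general, and is being \emph{assumed} here rather than proved---exactly as the paper does.
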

A theorem of Tian \cite{Tian2012} says that for any given integer
$k\ge0$, there are infinitely many square-free congruent numbers
in each residue class of $5$, $6$, and $7$ modulo $8$ with exactly
$k+1$ odd prime divisors. With a bit of work, we can show 
\begin{thm}
\label{thm:TianThm1.1} For any given integer $k\ge0$, there are
infinitely many square-free reflecting congruent numbers in the residue
class of $5$ modulo $8$ with exactly $k+1$ prime divisors. 
\end{thm}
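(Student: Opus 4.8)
The plan is to reach inside Tian's argument rather than to quote its stated corollary verbatim, and to feed the numbers it produces into the $2$-descent criterion that already underlies Theorems~\ref{thm:p5mod8} and~\ref{thm:p1mod8}. Recall the translation of the reflecting-congruent condition: a square-free $n>0$ is reflecting congruent if and only if $E_n\colon y^2=x^3-n^2x$ carries a rational point $(x,y)$ with $-x\in\mathbb{Q}^{*2}$ and $x+n\in\mathbb{Q}^{*2}$ (set $x=-t$ in the defining relations $n-t=u$, $n+t=v$, so that $-x=t=u$-to-$n$ distance and $x+n=n-t=u$). In terms of the complete $2$-descent map $\delta\colon E_n(\mathbb{Q})\to(\mathbb{Q}^*/\mathbb{Q}^{*2})^2$, $(x,y)\mapsto(x,\,x-n)$, this says exactly that the class $v_0:=(-1,-1)$ lies in the image of $\delta$; and since $v_0$ is not in the subgroup spanned by the images of the three $2$-torsion points (one computes that subgroup to be $\{(1,1),(-1,-n),(n,2),(-n,-2n)\}$, which contains $(-1,-1)$ only when $n$ is a square), this is a genuine constraint going beyond congruence. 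Consequently $n$ is reflecting congruent as soon as $v_0\in\Sel^{(2)}(E_n/\mathbb{Q})$ and $\sha(E_n/\mathbb{Q})[2]=0$, since then $\Sel^{(2)}(E_n/\mathbb{Q})=\delta(E_n(\mathbb{Q}))$.

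The first ingredient is local and routine: for every square-free $n\equiv5\mod8$ all of whose prime divisors are $\equiv1\mod4$ (which, as already noted, is forced for any primitive reflecting congruent candidate) one has $v_0\in\Sel^{(2)}(E_n/\mathbb{Q})$. Indeed, over $\mathbb{R}$ one takes $0<t<n$; at a prime $\ell\mid n$ with $\ell\equiv1\mod4$ any $\ell$-adic unit square $t$ works, for then $n\pm t\equiv\pm t\pmod\ell$ and $-1$ is a square modulo $\ell$; at $\ell=2$ one may take $t=4$, so that $n\pm t$ are $2$-adic units $\equiv1\mod8$; and the remaining places are immediate. This is essentially the local computation already made in the treatment of the prime case.

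The second — and main — ingredient is to extract from Tian's construction more than the congruence statement quoted above. His induction on the number of prime divisors begins from a prime $p_0\equiv5\mod8$, for which Monsky's theorem gives $\rank E_{p_0}(\mathbb{Q})=1$ and $\sha(E_{p_0}/\mathbb{Q})[2]=0$, and adjoins auxiliary primes $p_1,\dots,p_k$ subject to splitting and congruence conditions chosen so that a certain induced Heegner point stays non-torsion while the $2$-Selmer group does not grow. Running this with the auxiliary primes taken, in addition, $\equiv1\mod8$, one obtains for each $k\ge0$ infinitely many square-free $n=p_0p_1\cdots p_k\equiv5\mod8$ with exactly $k+1$ prime divisors, all $\equiv1\mod4$, which are congruent and for which $\Sel^{(2)}(E_n/\mathbb{Q})$ has the smallest dimension compatible with $w(E_n/\mathbb{Q})=-1$ (from~\eqref{eq:root number}) and the $2$-parity theorem, namely $\dim_{\mathbb{F}_2}\Sel^{(2)}(E_n/\mathbb{Q})=3$. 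Since congruence gives $\rank E_n(\mathbb{Q})\ge1$, this forces $\rank E_n(\mathbb{Q})=1$ and $\sha(E_n/\mathbb{Q})[2]=0$. Checking that Tian's auxiliary primes can indeed be confined to the class $1\mod8$, and that the $2$-Selmer bound genuinely survives the induction, is the delicate point — this is the ``bit of work'' alluded to before the statement, and I expect it to be the real obstacle: the published form of Tian's theorem yields only congruent numbers, and upgrading to reflecting congruent numbers forces one to re-run the construction while tracking $2$-Selmer minimality (equivalently, triviality of $\sha[2]$) together with the congruence classes of the auxiliary primes.

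Finally one assembles the pieces. For the numbers $n$ just produced, the first ingredient gives $v_0\in\Sel^{(2)}(E_n/\mathbb{Q})$ and the second gives $\sha(E_n/\mathbb{Q})[2]=0$, hence $v_0\in\delta(E_n(\mathbb{Q}))$, so $n$ is reflecting congruent by the criterion recalled above. As $n$ ranges over Tian's family this yields, for every $k\ge0$, infinitely many square-free reflecting congruent numbers $\equiv5\mod8$ with exactly $k+1$ prime divisors, which is the assertion.
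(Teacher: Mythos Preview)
Your approach is essentially the paper's: place $(1,-1)$ (your $v_0=(-1,-1)$, under a different but equivalent descent convention) in $S^{(2)}(E_n/\mathbb{Q})$ via the local calculation (the paper's Corollary~\ref{cor:p1mod4:(1,-1)}), then invoke Tian to get $\sha(E_n/\mathbb{Q})[2]=0$ and conclude via the descent criterion (Proposition~\ref{prop:necessary condition} and Theorem~\ref{thm:criterionby2descent}). The only difference is that you anticipate having to re-run Tian's construction to force the auxiliary primes into the class $1\bmod8$ and to track $2$-Selmer minimality, whereas in fact Tian's own hypotheses (as recorded here in Theorem~\ref{thm:TianThm1.3}) already demand all prime divisors $\equiv1\bmod4$ with exactly one $\equiv5\bmod8$, and his Lemma~5.3 (the paper's Lemma~\ref{lem:2Selmer:Lem5.3:Tian2012}) already gives $\dim_{\mathbb{F}_2}S^{(2)}(E_n/\mathbb{Q})=3$ directly --- so no re-running is needed and the appeal to the $2$-parity theorem is superfluous.
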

Moreover, this result can be strengthened by \cite[Thm. 5.2]{Tian2012}
as follows. 
\begin{thm}
\label{thm:TianThm5.2} Let $p_{0}\equiv5\mod8$ be a prime number.
Then there exists an infinite set $\Sigma$ of primes congruent to
$1$ modulo $8$ such that the product of $p_{0}$ with any finitely
many primes in $\Sigma$ is a reflecting congruent number. 
\end{thm}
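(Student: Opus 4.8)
The plan is to combine our criterion for reflecting congruent numbers with Tian's construction \cite[Thm.~5.2]{Tian2012}. First I would record the criterion in the shape it takes in the proofs of Theorems~\ref{thm:p5mod8} and~\ref{thm:p1mod8}. For a square-free integer $n>1$ all of whose prime divisors are $\equiv1\bmod 4$, the substitution $v=(s^{2}+n)/(2s)$, $t=(s^{2}-n)/(2s)$ solves $v^{2}-t^{2}=n$, and the remaining requirement $u^{2}=n-t^{2}$ becomes $(2su)^{2}=-s^{4}+6ns^{2}-n^{2}$. So $n$ is reflecting congruent if and only if the genus-one curve
\[
C_{n}\colon\quad Y^{2}=-s^{4}+6ns^{2}-n^{2}
\]
has a rational point; no such point is degenerate, since $s^{2}=n$ would make $n$ a rational square, and one may negate $s$ or replace it by $n/s$ to arrange $t>0$. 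Computing the classical quartic invariants gives $(I,J)=(48n^{2},0)$, so the Jacobian of $C_{n}$ is $Y^{2}=X^{3}-1296\,n^{2}X$, which is isomorphic over $\mathbb{Q}$ to $E_{n}$. Hence $C_{n}$ is a $2$-covering of $E_{n}$, and once it is everywhere locally soluble its class lies in $\sha(E_{n}/\mathbb{Q})[2]$.

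Next I would check that $C_{n}$ is everywhere locally soluble for every square-free $n\equiv5\bmod 8$ whose prime divisors are all $\equiv1\bmod 4$. Over $\mathbb{R}$, take $s^{2}=3n$, so $Y^{2}=8n^{2}$. For $p\nmid2n$ the quartic has discriminant $2^{14}n^{6}$, so $C_{n}$ has good reduction at $p$ and therefore a smooth $\mathbb{F}_{p}$-point by the Hasse bound, which lifts by Hensel's lemma. Over $\mathbb{Q}_{2}$, take $s=1$, so $Y^{2}=8-(n-3)^{2}=4(2-m^{2})$ with $n-3=2m$ and $m$ odd; since $2-m^{2}\equiv1\bmod 8$, this lies in $(\mathbb{Q}_{2}^{\times})^{2}$. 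For $p\mid n$, take $s=1$ again, so $Y^{2}\equiv-1\bmod p$, which is soluble because $p\equiv1\bmod 4$, and Hensel applies. Consequently, if $\sha(E_{n}/\mathbb{Q})[2]=0$ then $C_{n}(\mathbb{Q})\neq\emptyset$, so $n$ is reflecting congruent.

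Finally I would invoke \cite[Thm.~5.2]{Tian2012}: for the given prime $p_{0}\equiv5\bmod 8$ it furnishes an infinite set $\Sigma$ of primes $\equiv1\bmod 8$ such that, for every finite $S\subseteq\Sigma$, the curve $E_{n}$ with $n=p_{0}\prod_{p\in S}p$ has Mordell--Weil rank $1$ and trivial $\sha(E_{n}/\mathbb{Q})[2]$; the induction on $|S|$ there, resting on Heegner points and a prescribed choice of split primes, keeps the $2$-Selmer group at its minimal size $\dim_{\mathbb{F}_{2}}\Sel_{2}(E_{n})=3$. Every such $n$ is square-free, satisfies $n\equiv p_{0}\equiv5\bmod 8$, and has all prime divisors $\equiv1\bmod 4$, so the second paragraph applies and $n$ is reflecting congruent --- which is the assertion of the theorem.

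I expect the last step to be the main obstacle: one must take from \cite{Tian2012} not merely that these $n$ are congruent, but that $\sha(E_{n}/\mathbb{Q})[2]$ itself remains trivial along the whole family --- exactly where the Heegner-point machinery of \cite{Tian2012} is essential --- and one may need to intersect $\Sigma$ with additional Chebotarev conditions to control the local behaviour at $p_{0}$ and at the adjoined primes so that the $2$-Selmer dimension stays equal to $3$. A secondary point to state with care is that $C_{n}$ is a $2$-covering of $E_{n}$ itself, not of the $2$-isogenous curve $y^{2}=x^{3}+4n^{2}x$, so that the relevant group is precisely $\sha(E_{n})[2]$; this is what the invariant computation confirms.
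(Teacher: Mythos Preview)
Your proposal is correct and follows essentially the same strategy as the paper: show that the relevant $2$-covering of $E_n$ is everywhere locally soluble whenever all prime divisors of $n$ are $\equiv 1\bmod 4$, and then invoke Tian's construction to guarantee $\sha(E_n/\mathbb{Q})[2]=0$ along the family, forcing a global point. The only difference is cosmetic: the paper uses its $2$-descent framework (Theorem~\ref{thm:criterionby2descent}, Proposition~\ref{prop:necessary condition}, Corollary~\ref{cor:p1mod4:(1,-1)}) with the covering presented as the intersection of quadrics $C_{(1,-1)}$, whereas you write down the isomorphic plane-quartic model $C_n:Y^2=-s^4+6ns^2-n^2$ and re-derive the local solubility checks by hand.
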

In fact, we will show by our criterion of reflecting congruent numbers
that these congruent numbers congruent to $5$ modulo $8$ constructed
by Tian as in his \cite[Thm. 1.3]{Tian2012} are actually reflecting
congruent.
\begin{thm}
\label{thm:TianThm1.3} Let $n\equiv5\mod8$ be a square-free positive
integer with all prime divisors $\equiv1\mod4$, exactly one of which
is $\equiv5\mod8$, such that the field $\mathbb{Q}(\sqrt{-n})$ has
no ideal classes of exact order $4$. Then, $n$ is a reflecting congruent
number. 
\end{thm}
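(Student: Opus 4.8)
The plan is to reduce the statement, via our criterion for reflecting congruent numbers, to a fact about the arithmetic of the congruent‑number curve $E_{n}\colon y^{2}=x^{3}-n^{2}x=x(x-n)(x+n)$, and then to feed in Tian's results. Recall what that criterion amounts to. If $n$ is reflecting congruent, witnessed by a positive rational $t$ with $n-t^{2}=x^{2}$ and $n+t^{2}=y^{2}$, then $(-t^{2},\,txy)$ is a point of $E_{n}(\mathbb{Q})$; it is non‑torsion since $n$ is squarefree $>1$, and its image under the complete $2$‑descent map $\delta\colon E_{n}(\mathbb{Q})/2E_{n}(\mathbb{Q})\hookrightarrow(\mathbb{Q}^{*}/\mathbb{Q}^{*2})^{2}$, $P=(X,Y)\mapsto(X,\,X-n)$, equals $(-1,-1)$. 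Conversely, any $P=(X,Y)\in E_{n}(\mathbb{Q})$ with $\delta(P)=(-1,-1)$ has $X=-t^{2}$ with $t\ne 0$, so $n-X=n+t^{2}$ is a square, and $X+n=n-t^{2}$ is a square because its class is $(-1)(-1)=1$ (as $X(X-n)(X+n)=Y^{2}$); hence $n$ is reflecting congruent. Equivalently, $n$ is reflecting congruent if and only if the genus‑one $2$‑covering $C\colon v^{2}=n+u^{2},\ w^{2}=n-u^{2}$ has a rational point (a point necessarily has $u\ne 0$, since $u=0$ would force $n$ to be a square).

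So it suffices to show $(-1,-1)$ lies in the image of $\delta$. First I would check that $(-1,-1)$ lies in the $2$‑Selmer group $\Sel_{2}(E_{n})$, i.e.\ that $C$ has points everywhere locally; this is the only place the congruence hypotheses on $n$ are used. Over $\mathbb{R}$ the point $(0,\sqrt n,\sqrt n)$ works. At a prime $p\nmid 2n$ the class $(-1,-1)$ is unramified, hence automatically satisfies the local condition. At $p\mid n$ one has $p\equiv 1\bmod 4$, so $-1$ is a square in $\mathbb{Q}_{p}$; taking $u=1$ gives $v^{2}=n+1\equiv 1$ and $w^{2}=n-1\equiv -1$ modulo $p$, both $p$‑adic unit squares by Hensel. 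At $p=2$, since $n\equiv 5\bmod 8$, taking $u=2$ gives $v^{2}=n+4\equiv 1$ and $w^{2}=n-4\equiv 1$ modulo $8$, again squares in $\mathbb{Z}_{2}^{*}$. Thus $(-1,-1)\in\Sel_{2}(E_{n})$ for every squarefree $n\equiv 5\bmod 8$ all of whose prime divisors are $\equiv 1\bmod 4$, in particular for the $n$ of the theorem; and $(-1,-1)$ is not the $\delta$‑image of $O$ or of a $2$‑torsion point, since $n$ is squarefree $>1$.

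It remains to pass from the Selmer group to the image of $\delta$, i.e.\ to show $\sha(E_{n}/\mathbb{Q})[2]=0$, and this is where the two deeper hypotheses enter. On one hand, Tian's \cite[Thm.~1.3]{Tian2012} (via Gross--Zagier, Kolyvagin, and the induction on the number of prime factors) produces a point of infinite order, so $\rank E_{n}(\mathbb{Q})\ge 1$. On the other hand, for $n$ as in the theorem a Monsky‑type $2$‑descent gives $\dim_{\mathbb{F}_{2}}\Sel_{2}(E_{n})=3$, the hypothesis that $\mathbb{Q}(\sqrt{-n})$ have no ideal class of exact order $4$ (together with "exactly one prime $\equiv 5\bmod 8$") being precisely what makes the relevant descent matrix of maximal rank. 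Since $E_{n}(\mathbb{Q})_{\mathrm{tors}}=(\mathbb{Z}/2)^{2}$ we have $\dim_{\mathbb{F}_{2}}(\mathrm{image\ of\ }\delta)=\rank E_{n}(\mathbb{Q})+2$, and from $0\to E_{n}(\mathbb{Q})/2E_{n}(\mathbb{Q})\to\Sel_{2}(E_{n})\to\sha(E_{n}/\mathbb{Q})[2]\to 0$ we get $3=\rank E_{n}(\mathbb{Q})+2+\dim\sha(E_{n}/\mathbb{Q})[2]$ with $\rank\ge 1$; hence $\rank E_{n}(\mathbb{Q})=1$, $\sha(E_{n}/\mathbb{Q})[2]=0$, and the image of $\delta$ is all of $\Sel_{2}(E_{n})$. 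Combining with the previous paragraph, $(-1,-1)$ lies in the image of $\delta$, so by the criterion $n$ is reflecting congruent.

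The main obstacle is exactly this last step. The local solubility of $C$ and the verification of the local Selmer conditions are elementary, and the bookkeeping with the $2$‑descent exact sequence is routine; what is genuinely deep is the conjunction of positive Mordell--Weil rank — Tian's Heegner‑point construction — with the computation that $\Sel_{2}(E_{n})$ is as small as the parity of its dimension permits, for which the hypothesis on ideal classes of exact order $4$ in $\mathbb{Q}(\sqrt{-n})$ is indispensable. Once those two inputs are in place, the rest is the local analysis above.
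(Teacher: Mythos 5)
Your proposal is correct and follows essentially the same route as the paper: it reduces the statement via the complete $2$-descent criterion to showing that the class of $(1,-1)$ (in your coordinates, $(-1,-1)$) lies in $S^{(2)}(E_{n}/\mathbb{Q})$ by an elementary local analysis at $\infty$, $2$, and $p\mid n$, and then to the triviality of $\sha(E_{n}/\mathbb{Q})[2]$, obtained by combining Tian's Heegner-point lower bound $\rank E_{n}(\mathbb{Q})\ge1$ with the computation $\dim_{\mathbb{F}_{2}}S^{(2)}(E_{n}/\mathbb{Q})=3$ coming from the class-group hypothesis (Tian's Lemma~5.3). The only differences are cosmetic: a different pairing of the roots in the descent map, and your proof spells out the exact-sequence bookkeeping that the paper delegates to the proof of Theorem~1.3 of \cite{Tian2012}.
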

Before closing the paper, we discuss reflecting numbers of type $(k,m)$
with $\gcd(k,m)\ge3$. In virtue of a deep result on rational solutions
to the ternary Diophantine equation $x^{k}+y^{k}=2z^{k}$ for any
$k\ge3$, we can show that 
\begin{thm}
\label{thm:gcd(k,m)>=00003D3} There exist no reflecting numbers of
type $(k,m)$ if $\gcd(k,m)\ge3$. 
\end{thm}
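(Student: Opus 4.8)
The plan is to reduce the statement, via the filtration of reflecting sets recorded in observation 4), to two base cases, and then to settle each of them by quoting a known theorem on ternary Diophantine equations. First I would isolate the elementary fact that every integer $d\ge3$ is divisible either by an odd prime or by $4$: if no odd prime divides $d$, then $d$ is a power of $2$, and being $\ge3$ it is then divisible by $4$. Applied to $d=\gcd(k,m)$, together with the containment $\mathscr{R}(k,m)\subseteq\mathscr{R}(\ell,\ell)$ valid whenever $\ell\mid k$ and $\ell\mid m$, this shows it suffices to prove that $\mathscr{R}(p,p)=\emptyset$ for every odd prime $p$ and that $\mathscr{R}(4,4)=\emptyset$.

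For an odd prime $p$, suppose $n\in\mathscr{R}(p,p)$ and write the two rational $p$th powers as $u^{p},v^{p}$ and the nonzero rational $p$th power as $t^{p}$, so that $n-t^{p}=u^{p}$ and $n+t^{p}=v^{p}$. Subtracting gives $v^{p}+(-u)^{p}=2t^{p}$, so $(v,-u,t)$ is a rational point on $x^{p}+y^{p}=2z^{p}$ with $z=t\ne0$. The essential input is the theorem of Darmon and Merel that, for every odd exponent $p\ge3$, the only rational solutions of $x^{p}+y^{p}=2z^{p}$ with $z\ne0$ satisfy $x=y=z$; no coordinate can vanish here, since $v=0$ or $u=0$ would exhibit $2$ as a rational $p$th power. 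Hence $v=-u=t$, and then $2n=u^{p}+v^{p}=(-t)^{p}+t^{p}=0$ because $p$ is odd, contradicting $n\ne0$.

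For $\mathscr{R}(4,4)$, suppose $n\in\mathscr{R}(4,4)$ and write $n-t^{4}=u^{4}$, $n+t^{4}=v^{4}$ with $t\ne0$. Multiplying the two equations yields $u^{4}v^{4}=n^{2}-t^{8}$, that is, $(uv)^{4}+(t^{2})^{4}=n^{2}$. Fermat's classical descent shows that $a^{4}+b^{4}=c^{2}$ has no solution in nonzero integers, hence none in nonzero rationals, and since $t\ne0$ this forces $uv=0$; but $u=0$ gives $v^{4}=2t^{4}$ while $v=0$ gives $u^{4}=-2t^{4}$, both impossible as $2$ is not a rational fourth power and rational fourth powers are nonnegative. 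This contradiction gives $\mathscr{R}(4,4)=\emptyset$, completing the proof.

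The only genuinely hard ingredient is the Darmon--Merel theorem invoked in the first base case; everything else is the filtration bookkeeping and a classical infinite descent. The point that needs care is the systematic exclusion of the degenerate solutions of the two ternary equations — a vanishing coordinate, and, for $x^{p}+y^{p}=2z^{p}$, the solutions with $z=0$ — which is exactly where the hypothesis $t\ne0$ and the irrationality of $\sqrt[p]{2}$ and $\sqrt[4]{2}$ enter.
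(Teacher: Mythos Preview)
Your proof is correct and follows the same overall strategy as the paper: reduce via the filtration $\mathscr{R}(k,m)\subseteq\mathscr{R}(\ell,\ell)$ to a handful of base exponents, then kill each base case with a known result on the ternary equation $x^{\ell}+y^{\ell}=2z^{\ell}$, the decisive input being the Darmon--Merel theorem.

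The differences are in the bookkeeping of the base cases. The paper treats $d=3$, $d=4$, and odd primes $d\ge5$ separately, citing Euler for $d=3$ and Euler's result that $2T^{4}+U^{4}$ is never a nontrivial square for $d=4$. You instead fold $d=3$ into the general odd-prime case (legitimately, since the D\'enes/Darmon--Merel statement covers all odd exponents $\ge3$), and for $d=4$ you multiply the two defining equations to obtain $(uv)^{4}+(t^{2})^{4}=n^{2}$ and invoke Fermat's classical $a^{4}+b^{4}=c^{2}$. Your $d=4$ argument is a pleasant variant: it trades a slightly less familiar Euler citation for the most standard descent result in the subject, at the cost of the small extra step of multiplying and then excluding $uv=0$.
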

Let $d$ be the greatest common divisor of $k$ and $m$. Since $\mathscr{R}(k,m)\subset\mathscr{R}(d,d)$,
it suffices to show that there exist no reflecting numbers of type
$(d,d)$ for any $d\ge3$. The cases in which $d=3,4$ can be easily
proved, due to classical results of Euler. The cases in which $d\ge5$
follow easily from the Lander, Parkin, and Selfridge conjecture \cite{LanderParkinSelfridge1967}
on equal sums of like powers or the D\'enes Conjecture \cite{Denes1952}
on arithmetic progressions of like powers:
\begin{conjecture*}
If the formula $\sum_{i=1}^{n}a_{i}^{d}=\sum_{j=1}^{m}b_{j}^{d}$
holds, where $a_{i}\neq b_{j}$ are positive integers for all $1\le i\le n$
and $1\le j\le m$, then $m+n\ge d$.
\end{conjecture*}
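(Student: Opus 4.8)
The plan is to follow the reduction indicated after the statement. By the filtration noted in item~(4) applied to $d\mid k$ and $d\mid m$, one has $\mathscr R(k,m)\subset\mathscr R(d,d)$ with $d=\gcd(k,m)$, so it suffices to prove $\mathscr R(d,d)=\emptyset$ for every $d\ge3$. I fix such a $d$ and suppose, for contradiction, that some $n$ is $(d,d)$-reflecting, witnessed by rationals $u,v,t$ with $t\ne0$, $v^d\ne\pm u^d$, and $v^d-u^d=2t^d$.

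The first step is to pass to integers. Writing $u=a/g$, $v=b/g$, $t=c/g$ over a common denominator $g>0$ gives $b^d=a^d+c^d+c^d$ with $a,b,c\in\mathbb Z$; the hypotheses $t\ne0$ and $v^d\ne\pm u^d$ force $a,b,c$ all nonzero (for instance $a=0$ would give $(b/c)^d=2$, which is impossible). Reading this as an equality of sums of like powers and sorting the four nonzero terms according to the sign of their base---harmless because $(-x)^d=-x^d$ when $d$ is odd and every $d$th power is nonnegative when $d$ is even---produces a genuine identity $\sum_{i=1}^{n}a_i^{d}=\sum_{j=1}^{m}b_j^{d}$ in positive integers with both sides nonempty and $m+n=4$. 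The delicate point is the hypothesis $a_i\ne b_j$ of the displayed conjecture: I would verify that each way this distinctness can fail forces either $t=0$ or $v^d=\pm u^d$, i.e.\ precisely the degenerate cases excluded by assumption, so that the identity legitimately satisfies $a_i\ne b_j$ for all $i,j$.

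For $d\ge5$ the conclusion is then immediate: the Lander--Parkin--Selfridge/D\'enes bound forces $m+n\ge d$, contradicting $m+n=4<d$, so $\mathscr R(d,d)=\emptyset$. The cases $d=3$ and $d=4$ fall outside this argument, since there the bound reads $4\ge d$ and is satisfied; these I would settle unconditionally using Euler. For $d=3$, oddness lets me rewrite $v^3-u^3=2t^3$ as $v^3+(-u)^3=2t^3$, an instance of $X^3+Y^3=2Z^3$, whose only rational solutions have $X=Y$ or $Z=0$; both amount to $v^3=-u^3$ or $t=0$, again excluded. For $d=4$ I would invoke Euler's descent to conclude that $v^4-u^4=2t^4$ (equivalently, after dividing by $t^4$, that the quartic $X^4-Y^4=2$ carries no nontrivial rational point) has no rational solution with $t\ne0$ and $v^4\ne\pm u^4$.

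The main obstacle is precisely the low-degree cases $d=3,4$: the bound $m+n\ge d$ is vacuous there because our representation already has $m+n=4$, so the conjecture gives nothing and one must fall back on the specific arithmetic of $X^3+Y^3=2Z^3$ and $X^4-Y^4=2$. The subsidiary technical care lies in the passage from the rational relation $v^d-u^d=2t^d$ to a positive-integer equal-sum identity: the sign and denominator bookkeeping must be arranged so that the only failures of the distinctness condition $a_i\ne b_j$ are exactly the configurations $t=0$ and $v^d=\pm u^d$ that the definition rules out.
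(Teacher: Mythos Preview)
The statement you were given is the Lander--Parkin--Selfridge conjecture itself. This is a famous open problem; the paper does not prove it and presents it explicitly as a \emph{conjecture}. There is therefore no ``paper's own proof'' of this statement to compare against, and your proposal does not (and could not reasonably) prove it either.

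What you have actually written is an argument for a different result, namely Theorem~\ref{thm:gcd(k,m)>=00003D3} (that $\mathscr R(k,m)=\emptyset$ whenever $\gcd(k,m)\ge3$), \emph{assuming} the displayed conjecture for $d\ge5$. This is essentially the content of the paper's closing remark in Section~\ref{sec:gcd(k,m)>=00003D3}, where it is noted that Theorem~\ref{thm:gcd(k,m)>=00003D3} would also follow from the Lander--Parkin--Selfridge conjecture. But the paper's \emph{actual} proof of Theorem~\ref{thm:gcd(k,m)>=00003D3} is unconditional: for odd prime $d\ge5$ it invokes the D\'enes conjecture, which is now a theorem of Ribet and Darmon--Merel stating that $X^{d}+Y^{d}=2Z^{d}$ has only the trivial rational solutions. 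That is a statement about a specific ternary equation, not an inequality of the shape $m+n\ge d$, so your phrase ``the Lander--Parkin--Selfridge/D\'enes bound forces $m+n\ge d$'' conflates two genuinely different inputs. Your route through $m+n=4<d$ remains conditional on an open conjecture, whereas the paper's route through D\'enes is a complete proof.

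In short: you have misidentified the target. The displayed statement is an unproved conjecture, and your text is a (conditional) argument for a neighbouring theorem rather than a proof of the statement itself.
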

\begin{conjecture*}
For any $d\ge3$, if the ternary Diophantine equation $x^{d}+y^{d}=2z^{d}$
has a rational solution $(x,y,z)$, then $xyz=0$ or $|x|=|y|=|z|$. 
\end{conjecture*}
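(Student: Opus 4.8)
The plan is to prove the equivalent statement that $\mathscr{R}(d,d)=\emptyset$ for every $d\ge3$, which, via the inclusion $\mathscr{R}(k,m)\subset\mathscr{R}(d,d)$ for $d=\gcd(k,m)$ recorded in the introduction, immediately yields the theorem. By the criterion noted there, $\mathscr{R}(d,d)\ne\emptyset$ is equivalent to the ternary equation $v^{d}-u^{d}=2t^{d}$ admitting a rational solution with $t\ne0$ and $v^{d}\ne\pm u^{d}$. First I would normalize signs and clear denominators. For even $d$ one may replace $u,v,t$ by their absolute values without changing the $d$th powers, so, after discarding the degenerate possibility $u=0$ (which would force $(v/t)^{d}=2$), the problem becomes the existence of positive integers $U,V,T$ with $V^{d}=U^{d}+2T^{d}=U^{d}+T^{d}+T^{d}$; for odd $d$ I will instead feed the solution directly into the D\'enes equation, as explained below.

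For $d\ge5$ I would invoke the Lander--Parkin--Selfridge conjecture applied to $V^{d}=U^{d}+T^{d}+T^{d}$, written as $\sum_{i=1}^{3}a_{i}^{d}=\sum_{j=1}^{1}b_{j}^{d}$ with $(a_{1},a_{2},a_{3})=(U,T,T)$ and $b_{1}=V$. The cross-distinctness hypothesis $a_{i}\ne b_{j}$ holds because $U<V$ and because $T=V$ would give $-V^{d}=U^{d}$, impossible for positive integers; the conjecture then forces $m+n=4\ge d$, contradicting $d\ge5$. This argument is insensitive to the parity of $d$, so it settles all $d\ge5$ at once. As an alternative route, for odd $d$ the substitution $(x,y,z)=(v,-u,t)$ converts $v^{d}-u^{d}=2t^{d}$ into $x^{d}+y^{d}=2z^{d}$, whence the D\'enes conjecture forces either $|v|=|u|$ (so $v^{d}=\pm u^{d}$, excluded) or $uvt=0$; since $t\ne0$ the surviving cases $u=0$ or $v=0$ would make $2$ or $-2$ a rational $d$th power, which it is not. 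Even exponents $d\ge6$ can likewise be absorbed into this route by the filtration, since any such $d$ is divisible by an odd prime or by $4$, reducing $\mathscr{R}(d,d)$ to a prime or quartic case. Either conjecture therefore disposes of every $d\ge5$.

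The remaining cases $d=3,4$ are unconditional. For $d=3$ the equation $x^{3}+y^{3}=2z^{3}$ was resolved by Euler (its rational solutions all satisfy $xyz=0$ or $|x|=|y|=|z|$), so the odd-case argument applies verbatim. The case $d=4$ is where the sign genuinely matters: one cannot recast $v^{4}-u^{4}=2t^{4}$ as $x^{4}+y^{4}=2z^{4}$, so I would attack it directly by setting $s=t^{2}$ to obtain $v^{4}-u^{4}=2s^{2}$ and applying Fermat's method of infinite descent, a classical result of Euler, to conclude that there are no solutions in positive integers. I expect this $d=4$ descent to be the main obstacle, since it is the one step that cannot be imported from either displayed conjecture and must be executed (or cited) as a bona fide quadratic--quartic descent; the only other delicate points are the bookkeeping separating $v^{d}\ne u^{d}$ from $v^{d}\ne -u^{d}$ and the routine verification that $\pm2$ is never a rational $d$th power.
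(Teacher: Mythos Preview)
You have misidentified the target statement. The displayed statement is the D\'enes conjecture itself --- that for every $d\ge3$ the only rational solutions of $x^{d}+y^{d}=2z^{d}$ satisfy $xyz=0$ or $|x|=|y|=|z|$ --- whereas your proposal is a proof of Theorem~\ref{thm:gcd(k,m)>=00003D3} (the nonexistence of $(k,m)$-reflecting numbers when $\gcd(k,m)\ge3$). Your opening line even says the argument ``yields the theorem,'' and the reduction via $\mathscr{R}(k,m)\subset\mathscr{R}(d,d)$ is exactly the paper's route to Theorem~\ref{thm:gcd(k,m)>=00003D3}, not to D\'enes. The paper does \emph{not} prove the D\'enes conjecture at all; it quotes it and attributes its proof to Ribet~\cite{Ribet1997} and Darmon--Merel~\cite{DarmonMerel1997}, then uses it as a black box in Section~\ref{sec:gcd(k,m)>=00003D3}.

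Worse, your argument is circular with respect to the actual target: in your ``alternative route'' for odd $d$ you explicitly invoke the D\'enes conjecture to conclude, and your primary route for $d\ge5$ rests on the Lander--Parkin--Selfridge conjecture, which remains open. So neither branch furnishes an unconditional proof of the stated conjecture. Your claim that $\mathscr{R}(d,d)=\emptyset$ is ``equivalent'' to D\'enes is also incorrect for even $d$: as you yourself observe at $d=4$, the reflecting-number equation is $v^{d}-u^{d}=2t^{d}$, which for even $d$ cannot be rewritten as $x^{d}+y^{d}=2z^{d}$, so emptiness of $\mathscr{R}(4,4)$ says nothing about rational solutions of $x^{4}+y^{4}=2z^{4}$. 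If the intended target was Theorem~\ref{thm:gcd(k,m)>=00003D3}, then your outline matches the paper's proof (Euler for $d=3,4$, D\'enes for prime $d\ge5$, with the LPS remark as an aside); but as a proof of the D\'enes conjecture it is vacuous.
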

Fortunately, the latter D\'enes conjecture becomes a theorem by the
work of Ribet \cite{Ribet1997} and Darmon and Merel \cite{DarmonMerel1997},
based on which we prove Theorem \ref{thm:gcd(k,m)>=00003D3}. 

This work is organized as follows. Section \ref{sec:(k,m)} gives
a general description of $(k,m)$-reflecting numbers and picks out
some special ones, whose existence depends on whether $k$ and $m$
are coprime or not. Moreover, reflecting numbers of type $(k,1)$,
in particular, $(2,1)$ and $(3,1)$, are also discussed in this section.
Section \ref{sec:(2,2)} is devoted to reflecting congruent numbers
and occupies the major part of this work. In Section \ref{sec:gcd(k,m)>=00003D3},
we discuss reflecting numbers of type $(k,m)$ with $\gcd(k,m)\ge3$
and disprove their existence. 
\begin{notation*}
Throughout this paper, for any prime number $p$, we denote by $v_{p}$
the normalized $p$-adic valuation such that $v_{p}(\mathbb{Q}_{p}^{*})=\mathbb{Z}$
and $v_{p}(0)=+\infty$. 
\end{notation*}

\section{\label{sec:(k,m)}$(k,m)$-Reflecting Numbers}

In this section, we give a general discription of $(k,m)$-reflecting
numbers and then focus on $(k,1)$-reflecting numbers for smaller
$k=2,3$. 

\subsection{Homogeneous Equations}

By definition, a nonzero integer $n$ is $(k,m)$-reflecting if and
only if the following system of homogeneous equations 
\[
\begin{cases}
nS^{m}-T^{m}=S^{m-k}U^{k},\\
nS^{m}+T^{m}=S^{m-k}V^{k},
\end{cases}\text{if }k\le m,\text{ or }\begin{cases}
nS^{k}-S^{k-m}T^{m}=U^{k},\\
nS^{k}+S^{k-m}T^{m}=V^{k},
\end{cases}\text{if }k\ge m,
\]
has a nontrivial solution $(S,T,U,V)$ in integers such that $S,T>0$.
Such a solution is called \emph{primitive} if the greatest common
divisor of $S,T,U,V$ is $1$.\footnote{However, if $k\ne m$, then the greatest common divisor of $S$ and
$T$ might be greater than $1$.}  

In either case, we have $n=\frac{V^{k}+U^{k}}{2S^{k}}$ and $\frac{T^{m}}{S^{m}}=\frac{V^{k}-U^{k}}{2S^{k}}$.
Since $n$ and $T$ are nonzero, $V$ and $U$ must have the same
parity and satisfy $V^{k}\ne\pm U^{k}$.

\subsection{Inhomogeneous Equations}

If $k\ne m$, then it is more convenient to work with inhomogeneous
equations. Again we assume $t>0$ and write $t=T/S$ with $\gcd(S,T)=1$
and $S,T>0$. Then, we have 
\[
nS^{m}-T^{m}=S^{m}u^{k},\quad nS^{m}+T^{m}=S^{m}v^{k}.
\]
It follows that denominators of $u^{k}$ and $v^{k}$ are divisors
of $S^{m}$. If $p$ is a prime divisor of $S\ne1$, then $p$ does
not divide $T$. If $uv\ne0$, then we compare the $p$-adic valuations
of both sides of the above equations 
\[
mv_{p}(S)+kv_{p}(u)=v_{p}(nS^{m}-T^{m})=0=v_{p}(nS^{m}+T^{m})=mv_{p}(S)+kv_{p}(v).
\]
This implies that $S^{m}$ is a $k$th power and thus a $\lcm(k,m)$th
power, which is exactly the denominator of $u^{k}$ and $v^{k}$.
Then, for some nonzero integer $S_{0}$, we can write
\[
S=S_{0}^{k'},\ U^{k}=S^{m}u^{k}=(S_{0}^{m'}u)^{k},\ V^{k}=S^{m}v^{k}=(S_{0}^{m'}v)^{k}
\]
where $U,V\in\mathbb{Z}$ and $k'=k/\gcd(k,m),m'=m/\gcd(k,m)$. Therefore,
we have the following inhomogeneous equations, 
\begin{equation}
nS_{0}^{\lcm(k,m)}-T^{m}=U^{k},\quad nS_{0}^{\lcm(k,m)}+T^{m}=V^{k},\label{eq:inhomo}
\end{equation}
which become homogeneous when $k=m$. If exactly one of $u$ and $v$
is $0$ or $S=1$, then $t=T$, $S=S_{0}=1$, $u=U$, and $v=V$ are
all integers, so we will obtain the same equations as above. So we
can always write 
\[
n=\frac{V^{k}+U^{k}}{2S_{0}^{\lcm(k,m)}},\ T=\sqrt[m]{\frac{V^{k}-U^{k}}{2}},\text{ where }V^{k}\ne\pm U^{k}\text{ and }V\equiv U\mod2.
\]

\begin{prop}
\label{prop:R(k,m)} The set of $(k,m)$-reflecting numbers is given
by 
\[
\mathscr{R}(k,m)=\left\{ \frac{V^{k}+U^{k}}{2S_{0}^{\lcm(k,m)}}\in\mathbb{Z}^{*}:(S_{0},U,V)\in\mathbb{N}\times\mathbb{Z}^{2},\sqrt[m]{\frac{V^{k}-U^{k}}{2}}\in\mathbb{N}\right\} 
\]
and it is nonempty if and only if the following ternary Diophantine
equation 
\begin{equation}
2T^{m}+U^{k}=V^{k}\label{eq:2Tm+Uk=00003DVk}
\end{equation}
has an integer solution $(T,U,V)$ such that $U^{k}\ne\pm V^{k}$.
\end{prop}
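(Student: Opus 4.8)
The plan is to establish the two assertions in turn, bridging the two sides in each case by the passage to the inhomogeneous equations \eqref{eq:inhomo} already carried out above. For the displayed formula for $\mathscr{R}(k,m)$ I would argue by double inclusion. The inclusion ``$\subseteq$'' is essentially the content of the preceding discussion: if $n$ is $(k,m)$-reflecting, with $u,v\in\mathbb{Q}$ and $t\in\mathbb{Q}_{>0}$ such that $n-t^{m}=u^{k}$ and $n+t^{m}=v^{k}$, one writes $t=T/S$ in lowest terms with $S,T>0$, clears denominators, and --- in the generic case $uv\neq 0$ --- compares $p$-adic valuations at each prime $p\mid S$ to see that $S^{m}$ is a $k$th power, whence $S=S_{0}^{k'}$ with $k'=k/\gcd(k,m)$; the degenerate case $uv=0$ forces $S=1$ directly, since then $nS^{m}=T^{m}$ with $\gcd(S,T)=1$. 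Setting $U=S_{0}^{m'}u$ and $V=S_{0}^{m'}v$, where $m'=m/\gcd(k,m)$, produces integers satisfying \eqref{eq:inhomo}, from which one reads off $n=(V^{k}+U^{k})/(2S_{0}^{\lcm(k,m)})\in\mathbb{Z}^{*}$ and $\sqrt[m]{(V^{k}-U^{k})/2}=T\in\mathbb{N}$.

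For the reverse inclusion ``$\supseteq$'' I would run this construction backwards: given a triple $(S_{0},U,V)$ lying in the right-hand set, put $T=\sqrt[m]{(V^{k}-U^{k})/2}$, $t=T/S_{0}^{k'}$, $u=U/S_{0}^{m'}$, $v=V/S_{0}^{m'}$, and check directly --- using $2T^{m}=V^{k}-U^{k}$, $2nS_{0}^{\lcm(k,m)}=V^{k}+U^{k}$, and $k'm=m'k=\lcm(k,m)$ --- that $n-t^{m}=u^{k}$ and $n+t^{m}=v^{k}$, with $t>0$ and $v^{k}-u^{k}=2t^{m}>0$, so that $u^{k}\neq v^{k}$; hence $n\in\mathscr{R}(k,m)$.

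The nonemptiness criterion then follows quickly from the formula. If $\mathscr{R}(k,m)\neq\emptyset$, choose $(S_{0},U,V)$ as above and set $T=\sqrt[m]{(V^{k}-U^{k})/2}\in\mathbb{N}$: then $(T,U,V)$ is an integer solution of \eqref{eq:2Tm+Uk=00003DVk}, with $U^{k}\neq V^{k}$ since $T\geq 1$ and $U^{k}\neq -V^{k}$ since $V^{k}+U^{k}=2nS_{0}^{\lcm(k,m)}\neq 0$. Conversely, from an integer solution $(T,U,V)$ of \eqref{eq:2Tm+Uk=00003DVk} with $U^{k}\neq\pm V^{k}$, I would first normalize to $T\geq 1$ --- replacing $T$ by $|T|$ when $m$ is even, and $(T,U,V)$ by $(-T,V,U)$ when $m$ is odd and $T<0$, the hypothesis $U^{k}\neq V^{k}$ excluding $T=0$ --- and then take $S_{0}=1$: since $V^{k}+U^{k}=2T^{m}+2U^{k}$, the number $n:=(V^{k}+U^{k})/2=T^{m}+U^{k}$ is an integer, nonzero because $U^{k}\neq -V^{k}$, and $\sqrt[m]{(V^{k}-U^{k})/2}=T\in\mathbb{N}$, so $n\in\mathscr{R}(k,m)$.

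If there is an obstacle at all it is the valuation argument forcing $S$ to be a perfect $k'$th power; but this has already been dispatched in the lines immediately above the statement, so the proof largely amounts to reassembling those observations and checking the two easy converse directions. The residual bookkeeping --- the degenerate cases with $uv=0$, the parity remark $V\equiv U\bmod 2$ that makes $n$ an integer, the sign normalization of $T$ in the second half, and keeping track of the identity $k'm=m'k=\lcm(k,m)$ when converting between $S$ and $S_{0}$ --- is routine.
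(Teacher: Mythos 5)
Your proof is correct and follows essentially the same route as the paper: the formula for $\mathscr{R}(k,m)$ is obtained by reassembling the reduction to the inhomogeneous equations (\ref{eq:inhomo}) carried out immediately before the statement, and the nonemptiness criterion reduces to taking $S_{0}=1$ and $n=(V^{k}+U^{k})/2$ after normalizing the sign of $T$. The only cosmetic difference is that you verify integrality of $n$ via $n=T^{m}+U^{k}$, whereas the paper notes $U\equiv V\bmod 2$; both are immediate.
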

\begin{proof}
It remains to show the ``if'' part. If (\ref{eq:2Tm+Uk=00003DVk})
has an integer solution $(T,U,V)$ such that $U^{k}\ne\pm V^{k}$,
then $U\equiv V\mod2$ and $n=(V^{k}+U^{k})/2$ is $(k,m)$-reflecting. 
\end{proof}

\subsection{Special $(k,m)$-Reflecting Numbers}

If $\gcd(k,m)=1$ and $i\in[0,k)$ is the least integer such that
$k\mid(im+1)$, then $n=2^{im}T_{0}^{km}$ is $(k,m)$-reflecting
for any $T_{0}\in\mathbb{N}$. Indeed, $T=2^{i}T_{0}^{k}$ and $V=2^{(im+1)/k}T_{0}^{m}$
are solutions to $n-T^{m}=0$ and $n+T^{m}=V^{k}$. This shows that
$\mathscr{R}(k,m)$ is not empty whenever $\gcd(k,m)=1$. We will
refer to such numbers (and their opposites if $k$ is odd) as the
\emph{special $(k,m)$-reflecting numbers}. 

It turns out that special $(k,m)$-reflecting numbers exist only when
$\gcd(k,m)=1$.
\begin{prop}
\label{prop:special} If $n=T^{m}$ is a positive $(k,m)$-reflecting
number such that $2T^{m}=V^{k}$ for some positive integers $T$ and
$V$, then $\gcd(k,m)=1$ and $n=2^{im}T_{0}^{km}$, where $T_{0}$
is a positive integer and $i$ is the least nonnegative integer such
that $k\mid(im+1)$.
\end{prop}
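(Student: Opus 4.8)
The plan is to extract everything from the single Diophantine relation $2T^{m}=V^{k}$ by comparing $p$-adic valuations one prime at a time; once this equation is in hand, the hypothesis that $n=T^{m}$ is $(k,m)$-reflecting plays no further role (indeed it is then automatic, as in the construction of special reflecting numbers above). First I would apply $v_{2}$ to both sides of $2T^{m}=V^{k}$, which gives $1+m\,v_{2}(T)=k\,v_{2}(V)$, that is,
\[
k\,v_{2}(V)-m\,v_{2}(T)=1 .
\]
In particular $\gcd(k,m)$ divides $1$, so $\gcd(k,m)=1$; this first assertion comes essentially for free from the presence of the prime factor $2$.

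Next, using $\gcd(k,m)=1$, I would run the analogous comparison at each odd prime $p$: since $v_{p}(2T^{m})=m\,v_{p}(T)$ and $v_{p}(V^{k})=k\,v_{p}(V)$, we get $m\,v_{p}(T)=k\,v_{p}(V)$, and coprimality forces $k\mid v_{p}(T)$ (and $m\mid v_{p}(V)$) for every odd $p$. Writing $a_{p}:=v_{p}(T)/k$, a nonnegative integer which vanishes for all but finitely many $p$, and $T_{0}':=\prod_{p\ \mathrm{odd}}p^{a_{p}}$, this says $T=2^{\,j}(T_{0}')^{k}$ with $j:=v_{2}(T)\ge 0$, and hence $n=T^{m}=2^{\,jm}(T_{0}')^{km}$.

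It remains to reconcile the exponent of $2$ with the one in the statement. From $k\,v_{2}(V)-mj=1$ we have $k\mid(jm+1)$, so $j$ is a nonnegative integer satisfying the congruence that defines $i$; as $i$ is the least such, $i\le j$, and $k\mid(j-i)m$ together with $\gcd(k,m)=1$ gives $k\mid(j-i)$, say $j=i+kc$ with $c\ge 0$ an integer. Substituting,
\[
n=2^{(i+kc)m}(T_{0}')^{km}=2^{im}\bigl(2^{c}T_{0}'\bigr)^{km}=2^{im}T_{0}^{km},\qquad T_{0}:=2^{c}T_{0}' ,
\]
a positive integer, which is exactly the asserted form.

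I do not expect a genuine obstacle here; the argument is a short but careful valuation computation. The one point that needs attention is the final bookkeeping: one must check that $T_{0}'$ is an honest integer — which is precisely why the exact divisibility $k\mid v_{p}(T)$ at every odd prime is needed, not merely a relation among valuations — and that $c=(j-i)/k$ is nonnegative, so that $T_{0}$ is indeed a positive integer.
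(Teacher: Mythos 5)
Your proposal is correct and follows essentially the same route as the paper: the $2$-adic valuation identity $1+m\,v_{2}(T)=k\,v_{2}(V)$ forces $\gcd(k,m)=1$, the odd-prime valuations give $k\mid v_{p}(T)$, and the surplus power of $2$ (your $2^{kc}$) is absorbed into $T_{0}^{k}$. The only difference is that you spell out the final reduction $j=i+kc$ explicitly, whereas the paper compresses it into the statement $i=v_{2}(T)\bmod k$; this is a matter of exposition, not of substance.
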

\begin{proof}
Indeed, the $2$-adic valuation $1+mv_{2}(T)=kv_{2}(V)$ of $2T^{m}=V^{k}$
shows that $\gcd(k,m)=1$. If $p$ is an odd prime divisor of $T$,
then we have $mv_{p}(T)=kv_{p}(V)$. Since $m\mid v_{p}(V)$ and $k\mid v_{p}(T)$,
we can write $T=2^{i}T_{0}^{k}$ where $T_{0}$ is a positive integer
and $i=v_{2}(T)\mod k\in[0,k)$ is the least integer such that $k\mid(im+1)$.
\end{proof}
\begin{rem*}
Special $(k,m)$-reflecting numbers give an abundant but less interesting
supply of solutions to the problem. One could avoid them by setting
$uv\ne0$ in the definition.
\end{rem*}

\subsection{$(k,1)$-Reflecting Numbers}

If $m=1$, then (\ref{eq:2Tm+Uk=00003DVk}) has integer solutions
if and only if $V$ and $U$ have the same parity. So we obtain that 
\begin{cor}
The set of $(k,1)$-reflecting numbers is given by 
\[
\mathscr{R}(k,1)=\left\{ \frac{V^{k}+U^{k}}{2S_{0}^{k}}\in\mathbb{Z}^{*}:(S_{0},U,V)\in\mathbb{N}\times\mathbb{Z}^{2},V^{k}\ne\pm U^{k},V\equiv U\mod2\right\} .
\]
\end{cor}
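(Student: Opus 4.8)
The plan is to read this off Proposition \ref{prop:R(k,m)} by setting $m=1$. First I would observe that $\lcm(k,1)=k$ and that the $m$th root in the description of $\mathscr{R}(k,m)$ disappears, since $\sqrt[1]{(V^{k}-U^{k})/2}=(V^{k}-U^{k})/2$. Thus Proposition \ref{prop:R(k,m)} already yields
\[
\mathscr{R}(k,1)=\left\{\frac{V^{k}+U^{k}}{2S_{0}^{k}}\in\mathbb{Z}^{*}:(S_{0},U,V)\in\mathbb{N}\times\mathbb{Z}^{2},\ \frac{V^{k}-U^{k}}{2}\in\mathbb{N}\right\},
\]
and the only thing left to check is that the condition ``$(V^{k}-U^{k})/2\in\mathbb{N}$'' may be traded for ``$V^{k}\ne\pm U^{k}$ and $V\equiv U\bmod 2$'' without altering the resulting set of integers.

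To do this I would first record the elementary fact that $V^{k}\equiv V\bmod 2$ for every $k\ge 1$, so that $(V^{k}-U^{k})/2$ is an integer precisely when $V\equiv U\bmod 2$. The inclusion $\subseteq$ is then immediate: if $(V^{k}-U^{k})/2\in\mathbb{N}$ then $V^{k}-U^{k}>0$, hence $V^{k}\ne U^{k}$, while membership of $(V^{k}+U^{k})/(2S_{0}^{k})$ in $\mathbb{Z}^{*}$ forces $V^{k}+U^{k}\ne 0$, hence $V^{k}\ne -U^{k}$; and the parity condition holds by the displayed congruence. For the reverse inclusion I would exploit that the value $(V^{k}+U^{k})/(2S_{0}^{k})$ is symmetric in $U$ and $V$: given a triple $(S_{0},U,V)$ with $V^{k}\ne\pm U^{k}$, $V\equiv U\bmod 2$, and $(V^{k}+U^{k})/(2S_{0}^{k})\in\mathbb{Z}^{*}$, exactly one of $V^{k}>U^{k}$ or $U^{k}>V^{k}$ holds; after possibly interchanging $U$ and $V$ --- which leaves the value unchanged --- we may assume $V^{k}>U^{k}$, and then $(V^{k}-U^{k})/2$ is a positive integer, so the (possibly swapped) triple witnesses membership in the set displayed above.

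I do not expect any genuine obstacle: the statement is a repackaging of Proposition \ref{prop:R(k,m)} in the case $m=1$, and the only delicate point is the bookkeeping of signs and parities needed to see that dropping the positivity requirement on $(V^{k}-U^{k})/2$ in favour of $V^{k}\ne\pm U^{k}$ is harmless, which it is thanks to the $U\leftrightarrow V$ symmetry of the value and the built-in nonvanishing coming from the constraint ``$\in\mathbb{Z}^{*}$''.
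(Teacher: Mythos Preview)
Your proposal is correct and follows the same approach as the paper: specialize Proposition~\ref{prop:R(k,m)} to $m=1$ and observe that the condition $\sqrt[1]{(V^{k}-U^{k})/2}\in\mathbb{N}$ reduces to the parity condition $V\equiv U\bmod 2$ (together with $V^{k}\ne\pm U^{k}$). In fact your write-up is more careful than the paper's one-line justification, which simply notes that for $m=1$ equation~(\ref{eq:2Tm+Uk=00003DVk}) has integer solutions iff $U$ and $V$ have the same parity, leaving the positivity bookkeeping (your $U\leftrightarrow V$ swap) implicit.
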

\begin{notation*}
For each $i\in\mathbb{N}$, the $i$th power free part of any nonzero
rational number $t$ is temporarily denoted by $\llbracket t\rrbracket_{i}$,
i.e., an integer such that $t/\llbracket t\rrbracket_{i}\in(\mathbb{Q}^{+})^{i}$
is a rational $i$th power. So $n=\llbracket n\rrbracket_{i}$ means
that $n$ is an integer having no $i$th power divisors.
\end{notation*}
The following fact is useful in our analysis of $(2,m)$-reflecting
numbers. 
\begin{fact*}
$-1$ is a quadratic residue modulo a positive square-free number
$n$ if and only if each odd prime divisor of $n$ is congruent to
$1$ modulo $4$. 
\end{fact*}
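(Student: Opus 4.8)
The plan is to reduce to the case of a single prime modulus and then apply Euler's criterion. First I would write $n = 2^{\varepsilon} p_1 \cdots p_r$ with $\varepsilon \in \{0,1\}$ and $p_1, \dots, p_r$ distinct odd primes, which is legitimate since $n$ is square-free. Next I would note that any $x$ with $x^2 \equiv -1 \pmod{n}$ is automatically coprime to $n$, so that $-1$ being a quadratic residue modulo $n$ means precisely that $-1$ lies in the image of the squaring map on $(\mathbb{Z}/n\mathbb{Z})^{*}$.

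By the Chinese Remainder Theorem, the ring isomorphism $\mathbb{Z}/n\mathbb{Z} \cong (\mathbb{Z}/2\mathbb{Z})^{\varepsilon} \times \prod_{i=1}^{r} \mathbb{Z}/p_i\mathbb{Z}$ reduces the problem to asking whether $-1$ is a square in each factor. Modulo $2$ one has $-1 \equiv 1 = 1^2$, so this factor imposes no condition and the parity of $n$ is irrelevant. For each odd prime $p_i$, Euler's criterion gives that $-1$ is a quadratic residue modulo $p_i$ if and only if $(-1)^{(p_i-1)/2} = 1$, i.e. if and only if $p_i \equiv 1 \pmod{4}$. Assembling these equivalences yields the Fact.

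There is no genuine obstacle here; the only step deserving a word of care is the converse implication — that $p_i \equiv 1 \pmod{4}$ for every $i$ forces $-1$ to be a quadratic residue modulo $n$ — which is exactly where the Chinese Remainder Theorem is needed to glue square roots of $-1$ modulo the individual $p_i$ (and modulo $2$) into a single square root modulo $n$. Square-freeness is what keeps this painless, since we never have to lift a square root from a prime $p$ to a higher prime power.
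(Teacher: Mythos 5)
Your argument is correct and complete: the reduction via the Chinese Remainder Theorem to the prime factors of the square-free modulus, the observation that the factor $2$ imposes no condition since $-1\equiv 1^{2}\pmod 2$, and Euler's criterion at each odd prime together give exactly the stated equivalence. The paper states this as a standard fact without proof, and your write-up is the standard argument one would supply; the only point worth checking, that a solution of $x^{2}\equiv-1\pmod n$ is automatically coprime to $n$, you have handled correctly.
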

It is easy to classify primitive $(2,1)$-reflecting numbers, among
which $n=1$ is the first one and $1\cdot5^{2}-2^{3}\cdot3=1^{2}$
and $1\cdot5^{2}+2^{3}\cdot3=7^{2}$, where $S_{0}=5$ is the smallest,
and $n=2$ is the special one for which $u=0$ and $2-2=0^{2}$, $2+2=2^{2}$.
\begin{prop}
\label{prop:R'(2,1)} The set $\mathscr{R}'(2,1)$ consists of positive
square-free integers $n$, such that $n$ has no prime divisors congruent
to $3$ modulo $4$, or equivalently, $-1$ is a quadratic residue
modulo $n$. 
\end{prop}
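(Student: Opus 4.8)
The plan is to reduce everything to the classical description of integers that are sums of two squares. By the Corollary above (with $k=2$), and since every element of $\mathscr{R}(2,1)$ is automatically positive while primitivity for $(2,1)$ means precisely square-freeness, a square-free positive integer $n$ belongs to $\mathscr{R}'(2,1)$ if and only if there exist $S_0\in\mathbb{N}$ and $U,V\in\mathbb{Z}$ with $V^2\ne U^2$ and $2nS_0^2=V^2+U^2$; the parity condition $V\equiv U\bmod 2$ is automatic because $V^2+U^2=2nS_0^2$ is even. Dividing through by $S_0^2$, this is the same as saying that $2n$ is a sum of two rational squares $X^2+Y^2$ with $X^2\ne Y^2$. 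So I must characterise the square-free $n$ for which $2n$ admits such a representation.

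Next I would invoke two standard facts. First, a positive integer is a sum of two squares of rational numbers if and only if it is a sum of two squares of integers (this follows, for instance, from the Hasse--Minkowski theorem applied to the ternary form $X^2+Y^2-2nZ^2$, or from an elementary descent on the circle $X^2+Y^2=2n$). Second, a positive integer is a sum of two integer squares if and only if every prime $\equiv 3\bmod 4$ occurs to an even exponent in its factorisation. Applying these to $2n$ with $n$ square-free: the exponent of $2$ is irrelevant to the criterion, and for an odd prime $p$ one has $v_p(2n)=v_p(n)\in\{0,1\}$, so the condition becomes exactly that no prime $\equiv 3\bmod 4$ divides $n$.

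Finally I would dispose of the distinctness requirement $X^2\ne Y^2$, which is the only point where some care is needed, since a priori it could further shrink the set. If $2n$ is a sum of two rational squares, then the smooth plane conic $X^2+Y^2=2nZ^2$ has a rational point, hence is isomorphic to $\mathbb{P}^1$ and carries infinitely many rational points, of which only finitely many (those with $X=\pm Y$) fail the distinctness condition; clearing denominators in any of the others yields the required triple $(S_0,U,V)$. Conversely, if $n$ is $(2,1)$-reflecting then in particular $2n$ is a sum of two rational squares, so by the above no prime $\equiv 3\bmod 4$ divides $n$. This establishes the first description of $\mathscr{R}'(2,1)$, and the equivalence with ``$-1$ is a quadratic residue modulo $n$'' is then immediate from the Fact recorded just before the statement, since $n$ having no prime divisor $\equiv 3\bmod 4$ is the same as all of its odd prime divisors being $\equiv 1\bmod 4$. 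The main obstacle, such as it is, is the passage between rational and integral sums of two squares, together with the parallel conic argument handling distinctness; everything else is bookkeeping with $p$-adic valuations.
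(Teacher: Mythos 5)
Your proof is correct, and its backbone --- the classical two-squares theorem combined with the Fact about $-1$ being a quadratic residue --- is the same as the paper's, but the packaging differs in two places worth noting. First, you pass to rational representations of $2n$ and then invoke the equivalence of rational and integral representability (Davenport--Cassels or Hasse--Minkowski); the paper avoids this step entirely, applying the two-squares theorem directly to the integer $2nS_0^{2}$ in the forward direction and producing an integral representation with $S_0=1$ in the converse, so the rational-to-integral lemma is dead weight in your argument (though harmless). Second, your treatment of the distinctness condition $V^{2}\ne U^{2}$ is genuinely different and arguably cleaner: you note that the conic $X^{2}+Y^{2}=2nZ^{2}$, once it has a rational point, has infinitely many, of which only finitely many satisfy $X=\pm Y$ (indeed none when $n>1$ is square-free, and only four when $n=1$), which handles all $n$ uniformly. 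The paper instead splits cases: for square-free $n>1$, two equal squares summing to $2n$ would force $n$ to be a perfect square, a contradiction; for $n=1$ it exhibits the explicit representation $2\cdot 5^{2}=7^{2}+1^{2}$ with $S_0=5$. Both routes are valid; yours trades an explicit example and an elementary parity observation for a general-position argument on a rational conic, at the cost of slightly heavier machinery.
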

\begin{proof}
If $n\in\mathscr{R}'(2,1)$, then $2nS_{0}^{2}=V^{2}+U^{2}$ is a
sum of two distinct squares with the same parity. Since the square-free
part of a sum of two squares cannot have a prime divisor $\equiv3\mod4$,
$n$ has no prime divisors $\equiv3\mod4$. Conversely, we have seen
that $1\in\mathscr{R}'(2,1)$ and if $n>1$ is a square-free integer
having no prime divisors $\equiv3\mod4$, then $2n$ can be written
as a sum of two squares with same parity. These two squares must be
distinct; otherwise, $n$ will not be square-free. 
\end{proof}
As a $(2,1)$-reflecting number, $n=1$ is closely related to the
congruent numbers.
\begin{prop}
There is a one-to-one correspondence between the set of square-free
congruent numbers and the set $\{\llbracket t\rrbracket_{2}\mid t\in\mathbb{Q}^{+},1\pm t\in\mathbb{Q}^{2}\}$.
\end{prop}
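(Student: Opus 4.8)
The plan is to show that the two sets in fact \emph{coincide} as subsets of the square-free positive integers, so that the asserted one-to-one correspondence is just the identity map. First note that, by the very definition of $\llbracket\ \rrbracket_{2}$, every member $\llbracket t\rrbracket_{2}$ of the set on the right is a square-free positive integer, and of course the set on the left consists of square-free positive integers as well. So it suffices to prove the two inclusions, each of which is a direct unwinding of the respective definition.

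For ``$\subseteq$'', I would start from a square-free congruent number $n$, realized as the area of a rational right triangle with legs $p,q$ and hypotenuse $h$. Then $((p-q)/2)^{2},\,(h/2)^{2},\,((p+q)/2)^{2}$ is an arithmetic progression of rational squares with common difference $n$; taking $y=h/2\in\mathbb{Q}^{+}$ gives $y^{2}\pm n\in\mathbb{Q}^{2}$, and $y^{2}-n=((p-q)/2)^{2}>0$ because a rational right triangle is never isosceles (otherwise $\sqrt{2}\in\mathbb{Q}$). Put $t=n/y^{2}$. Then $t\in\mathbb{Q}^{+}$, one has $1\pm t=(y^{2}\pm n)/y^{2}\in\mathbb{Q}^{2}$, and since $t/n=(1/y)^{2}\in(\mathbb{Q}^{+})^{2}$ with $n$ square-free, we get $\llbracket t\rrbracket_{2}=n$. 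Hence $n$ lies in the right-hand set.

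For ``$\supseteq$'', take $t\in\mathbb{Q}^{+}$ with $1\pm t\in\mathbb{Q}^{2}$ and set $n=\llbracket t\rrbracket_{2}$. Since $t/n$ is a positive rational square we may write $t=nr^{2}$ with $r\in\mathbb{Q}^{+}$. With $y=1/r$ one computes $y^{2}+n=(1+t)/r^{2}\in\mathbb{Q}^{2}$ and $y^{2}-n=(1-t)/r^{2}\in\mathbb{Q}^{2}$; moreover $1-t\in\mathbb{Q}^{2}$ forces $t\le1$, while $t=1$ is impossible because then $1+t=2\notin\mathbb{Q}^{2}$, so $y^{2}-n>0$. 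Thus $y^{2}-n,\,y^{2},\,y^{2}+n$ are three distinct positive rational squares in arithmetic progression with common difference $n>0$, so $n$ is a square-free congruent number. Combining the two inclusions finishes the proof.

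There is no genuine obstacle here; the result is essentially a change of variables $y\leftrightarrow t=n/y^{2}$. The only points demanding care are the behaviour of the square-free normalization $\llbracket\ \rrbracket_{2}$ under multiplication by rational squares (immediate from its definition) and the exclusion of the degenerate value $t=1$ on the reflecting side. As a consistency check, since $1$ is not a congruent number, the ``$\supseteq$'' inclusion shows that there is no $t\in\mathbb{Q}^{+}$ with $1\pm t\in\mathbb{Q}^{2}$ and $\llbracket t\rrbracket_{2}=1$.
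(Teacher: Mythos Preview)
Your proof is correct and follows essentially the same approach as the paper: both directions are proved by the change of variables $t\leftrightarrow n/y^{2}$ between the condition ``$1\pm t\in\mathbb{Q}^{2}$'' and the arithmetic-progression-of-squares characterization of congruent numbers, showing that the two sets actually coincide. Your version is in fact more careful than the paper's, as you explicitly exclude the degenerate cases $p=q$ and $t=1$.
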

\begin{proof}
If $m$ is a square-free congruent number, then there exists $t\in\mathbb{Q}^{+}$
such that $t^{2}\pm m$ or $1\pm mt^{-2}$ are rational squares. Hence,
$1$ is $(2,1)$-reflecting and $m=\llbracket mt^{-2}\rrbracket_{2}$.
 Conversely, if $t\in\mathbb{Q}^{+}$ is such that $1-t=u^{2}$ and
$1+t=v^{2}$ for $u,v\in\mathbb{Q}^{+}$. Clearing denominators of
$t,u,v$, we have $S^{2}-S^{2}t=U^{2}$ and $S^{2}+S^{2}t=V^{2}$,
i.e., $S^{2}t$ is the common difference of the arithmetic progression
$U^{2},S^{2},V^{2}$ of perfect squares, and thus $\llbracket t\rrbracket_{2}=\llbracket S^{2}t\rrbracket_{2}$
 is a congruent number. 
\end{proof}

\begin{rem*}
Searching all rational numbers $t$ such that $1\pm t$ are rational
squares is equivalent to searching all congruent numbers $n$ and
arithmetic progressions of three rational squares with common difference
$n$. So in this sense the reflecting number problem generalizes the
congruent number problem.
\end{rem*}
It is not easy to classify all primitive $(3,1)$-reflecting numbers,
among which $3$ is the first one and $3\cdot21^{3}-22870=17^{3}$,
$3\cdot21^{3}+22870=37^{3}$, where $S_{0}=21$ is the smallest, and
$4$ is the special one and $4-4=0$, $4+4=2^{3}$. 

Note that $(3,1)$-reflecting numbers are closely related to sums
of two distinct rational cubes. Indeed, if $n$ is $(3,1)$-reflecting,
then there exists $u,v\in\mathbb{Q}$ such that $2n=u^{3}+v^{3}$
where $v\ne\pm u$, i.e., $2n$ can be written as sums of two distinct
rational cubes. Conversely, if $N\ne0$ can be written as sums of
two distinct rational cubes, so does $8N$ and thus $n=4N$ is $(3,1)$-reflecting.
The problem of deciding whether an integer can be written as sums
of two rational cubes has a long history; cf. \cite[Ch. XXI, pp. 572--578]{Dickson2005history}.
In particular, we recall the following theorem of Euler, cf. \cite[Ch. XV, Thm. 247, pp. 456--458]{Euler1822elements},
which proves that $n=1$ is not $(3,1)$-reflecting. 
\begin{thm*}
Neither the sum nor the difference of two cubes can become equal to
the double of another cube; or, in other words, the formula, $x^{3}\pm y^{3}=2z^{3}$,
is always impossible, except in the evident case of $x=y$.\footnote{ Here, $x$, $y$, and $z$ are assumed to be non-negative. }
\end{thm*}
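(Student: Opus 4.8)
The plan is a classical infinite descent. First I would merge the two signs: since $x^{3}-y^{3}=2z^{3}$ is the same as $x^{3}+(-y)^{3}=2z^{3}$, it suffices to prove that $a^{3}+b^{3}=2c^{3}$ has no solution in integers with $abc\neq0$ and $a\neq\pm b$; Euler's statement then follows by inspecting the degenerate cases. Suppose such a solution existed and pick one with $|c|$ minimal. Cancelling common prime factors (an odd prime dividing both $a$ and $b$ also divides $c$, and the same holds for $2$ after a single reduction) we may assume $\gcd(a,b)=1$; the equation then forces $a$ and $b$ to be both odd, so $s:=(a+b)/2$ and $t:=(a-b)/2$ are integers with $\gcd(s,t)=1$, of opposite parity, and with $st\neq0$. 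The identity $(s+t)^{3}+(s-t)^{3}=2s(s^{2}+3t^{2})$ turns the equation into $c^{3}=s\,(s^{2}+3t^{2})$, where $\gcd(s,\,s^{2}+3t^{2})=\gcd(s,3)$.

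The heart of the matter is the following representation lemma: if $X^{2}+3Y^{2}$ is a perfect cube with $\gcd(X,Y)=1$ and $3\nmid X$, then there are coprime integers $a_{1},b_{1}$ of opposite parity with
\[
X=a_{1}(a_{1}-3b_{1})(a_{1}+3b_{1}),\qquad Y=3b_{1}(a_{1}-b_{1})(a_{1}+b_{1});
\]
in particular $X^{2}+3Y^{2}=(a_{1}^{2}+3b_{1}^{2})^{3}$. I would prove it via the arithmetic of the Eisenstein integers $\mathbb{Z}[\omega]$, $\omega=e^{2\pi i/3}$, which is a principal ideal domain: factor $X^{2}+3Y^{2}=(X+Y\sqrt{-3})(X-Y\sqrt{-3})$; show the two conjugate factors are coprime in $\mathbb{Z}[\omega]$ (a common divisor divides $2X$ and $2Y\sqrt{-3}$, hence has norm dividing $4$, hence — the cube $X^{2}+3Y^{2}$ being necessarily odd — is a unit); deduce that each factor is a unit times a cube; and finally dispose of the six units of $\mathbb{Z}[\omega]$, whose cubes are only $\pm1$, to arrive at the identities above. (Alternatively, this is provable by elementary manipulations with the composition law $(p^{2}+3q^{2})(r^{2}+3s^{2})=(pr+3qs)^{2}+3(ps-qr)^{2}$.)

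Granting the lemma, the descent runs as follows. If $3\nmid s$, then $s$ and $s^{2}+3t^{2}$ are coprime, so each is $\pm$ a cube; applying the lemma to the positive cube $s^{2}+3t^{2}$ gives coprime $a_{1},b_{1}$ with $s=a_{1}(a_{1}-3b_{1})(a_{1}+3b_{1})$. The factors $a_{1},\ a_{1}-3b_{1},\ a_{1}+3b_{1}$ are pairwise coprime (using $3\nmid a_{1}$ and the parity) and their product is $\pm$ a cube, so each of them is $\pm$ a cube; the relation $(a_{1}-3b_{1})+(a_{1}+3b_{1})=2a_{1}$ then yields, after absorbing signs into the cube roots, a new integral solution of $a^{3}+b^{3}=2c^{3}$ whose $c$-coordinate $p$ satisfies $|p|^{3}=|a_{1}|\le a_{1}^{2}+3b_{1}^{2}=(s^{2}+3t^{2})^{1/3}\le|c|$; this new solution is again nontrivial ($a_{1}\neq0$ since $s\neq0$, and $b_{1}\neq0$ since $t\neq0$) and has $|p|<|c|$, contradicting minimality. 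If instead $3\mid s$, then $3\nmid t$ and $v_{3}(s^{2}+3t^{2})=1$, so $v_{3}(c)=:j\ge1$ and $v_{3}(s)=3j-1$; stripping off the powers of $3$ rewrites the equation as $c_{0}^{3}=s_{0}\bigl(t^{2}+3(3^{3j-2}s_{0})^{2}\bigr)$ with $3\nmid s_{0}c_{0}$ and $\gcd(s_{0},t)=1$. Now the lemma applies to the cube $t^{2}+3Q^{2}$ with $Q=3^{3j-2}s_{0}$ (here $3\nmid t$), giving $t=a_{1}(a_{1}-3b_{1})(a_{1}+3b_{1})$ and $3^{3(j-1)}s_{0}=b_{1}(a_{1}-b_{1})(a_{1}+b_{1})$; the right-hand side is $\pm$ a cube with pairwise coprime factors, and $(a_{1}+b_{1})-(a_{1}-b_{1})=2b_{1}$ produces a strictly smaller nontrivial solution just as before. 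Either way we contradict the minimality, so no nontrivial solution exists; equivalently, $x^{3}\pm y^{3}=2z^{3}$ forces $x=y$.

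The only real obstacle I anticipate is the representation lemma. The descent steps proper — the pairwise coprimality, reading off the new triple, the estimate that drives $|c|$ strictly down, and the nontriviality check — are mechanical once one is careful with signs and parity; but justifying the cube-factorization of $X^{2}+3Y^{2}$ cleanly, in particular controlling the units of $\mathbb{Z}[\omega]$ and the ramified prime above $3$, is where the genuine work lies. Since this is precisely the classical theorem of Euler quoted above, one may of course also simply invoke \cite{Euler1822elements}.
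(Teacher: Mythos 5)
The paper does not actually prove this statement: it is quoted verbatim from Euler's \emph{Elements of Algebra} \cite[Ch.~XV, Thm.~247]{Euler1822elements} and used as a black box, the accompanying remark noting only that Euler's argument is an infinite descent in the style of his proof of Fermat's Last Theorem for exponent $3$. Your proposal supplies exactly that missing descent in its standard modern form: reduction to a coprime solution of $a^{3}+b^{3}=2c^{3}$, the substitution $s=(a+b)/2$, $t=(a-b)/2$ giving $c^{3}=s(s^{2}+3t^{2})$, the cube-representation lemma for $X^{2}+3Y^{2}$ proved in $\mathbb{Z}[\omega]$ (which repairs the well-known gap in Euler's own use of the non-maximal order $\mathbb{Z}[\sqrt{-3}]$), and the two descent cases according to whether $3\mid s$. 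I checked the coprimality claims, the $3$-adic bookkeeping in the second case, the nontriviality of the new triple, and the bound $|p|^{3}\le|c|$; all are correct, and strictness of the descent holds because $|c|=1$ would force $t=0$. Two small repairs are needed. First, your representation lemma is stated too strongly: $\gcd(X,Y)=1$ and $3\nmid X$ do not make $X^{2}+3Y^{2}$ odd (take $X,Y$ both odd), and oddness is used both for the coprimality of the conjugate factors and in the conclusion, whose formulas force $X\not\equiv Y\bmod 2$; add the hypothesis $X\not\equiv Y\bmod 2$, which does hold in both of your applications. Second, showing that the unit multiplying the cube in $\mathbb{Z}[\omega]$ is $\pm1$ rather than $\pm\omega^{\pm1}$ is only gestured at; it is standard (compare coefficients in the basis $1,\omega$, or reduce modulo $\sqrt{-3}$) but it is genuine content, as you yourself acknowledge. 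With those two points attended to, the argument is complete and is, in substance, the classical proof that the paper delegates to Euler.
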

\begin{rem*}
Euler's proof is based on Fermat's infinite descent method  and similar
to his proof of Fermat's Last Theorem of degree $3$. 
\end{rem*}
Let $C^{N}:u^{3}+v^{3}=N$ and $C_{N}:y^{2}=x^{3}+N$, where $N\ne0$,
be elliptic curves defined over rational numbers. The Weierstrass
form of the elliptic curve $C^{N}$ is given by the elliptic curve
$C_{-432N^{2}}$. Indeed, the homogeneous form of $C^{N}$ is given
by $U^{3}+V^{3}=NW^{3}$, which has a rational point $[1,-1,0]$.
One checks that 
\[
x=12N\frac{1}{v+u},\quad y=36N\frac{v-u}{v+u}
\]
satisfy the Weierstrass equation $C_{-432N^{2}}:y^{2}=x^{3}-432N^{2}$,
whose homogeneous form is given by $Y^{2}Z=X^{3}-432N^{2}Z^{3}$,
which contains a rational point $[0,1,0]$. Conversely, $u$ and $v$
can be expressed in terms of $x$ and $y$ by 
\[
u=\frac{36N-y}{6x},\quad v=\frac{36N+y}{6x}.
\]
Hence, there exists a one-to-one correspondence between rational points
$(u,v)\in C^{N}(\mathbb{Q})$ and rational points $(x,y)\in C_{-432N^{2}}(\mathbb{Q})$
(except the case when $v=-u$ we manually make the correspondence
between $[1,-1,0]$ and $[0,1,0]$). 

Let $N=2n$. Notice that $C_{-1728n^{2}}:y^{2}=x^{3}-1728n^{2}$ is
isomorphic to $C_{-27n^{2}}:y'^{2}=x'^{2}-27n^{2}$ via $y'=2^{-3}y$
and $x'=2^{-2}x$. 
\begin{prop}
\label{prop:R'(3,1):order} The set $\mathscr{R}'(3,1)$ consists
of positive cube-free integers $n$ such that there exists a nontrivial
rational point $(x,y)$ of order other than $2$ on $C_{-27n^{2}}$.
\end{prop}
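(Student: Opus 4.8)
The plan is to combine two things already in the excerpt: the observation that $n$ is $(3,1)$-reflecting exactly when $2n$ is a sum of two \emph{distinct} rational cubes, and the explicit correspondence recorded above between rational points of $C^{N}$ and of $C_{-432N^{2}}$, applied with $N=2n$ (so $-432N^{2}=-1728n^{2}$) together with the isomorphism $C_{-1728n^{2}}\cong C_{-27n^{2}}$.

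First I would unwind the defining condition. Directly from the definition of $(3,1)$-reflecting numbers, $n\in\mathscr{R}'(3,1)$ if and only if $n$ is positive, cube-free, and $2n=u^{3}+v^{3}$ for some $u,v\in\mathbb{Q}$ with $u\ne v$: distinctness of the two rational cubes $u^{3},v^{3}$ is equivalent to $u\ne v$, the ordering needed to make $t=(v^{3}-u^{3})/2$ positive is arranged by swapping $u$ and $v$, and the possibility $u=-v$ is excluded automatically since it would force $n=0$.

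Next I would push such a pair through the maps $x=12N/(u+v)$, $y=36N(v-u)/(u+v)$ with $N=2n$, producing a rational point $(x,y)$ on $C_{-1728n^{2}}$: the denominator $u+v$ is nonzero because $u^{3}+v^{3}=2n\ne 0$, so $x$ is well defined and nonzero, whence $(x,y)\ne O$; and $u\ne v$ forces $y\ne 0$. Transporting along $(x,y)\mapsto(x/4,y/8)$ to $C_{-27n^{2}}$ — an isomorphism of elliptic curves, hence order-preserving — yields a nontrivial rational point of order other than $2$, using that a rational point on a short Weierstrass model has order dividing $2$ precisely when its $y$-coordinate vanishes. For the converse I would run the inverse formulas $u=(36N-y)/(6x)$, $v=(36N+y)/(6x)$: from a nontrivial rational point of order $\ne 2$ on $C_{-27n^{2}}$, equivalently on $C_{-1728n^{2}}$, one has $y\ne 0$ (order $\ne 2$) and $x\ne 0$ (a point with $x=0$ would need $y^{2}=-1728n^{2}<0$), so $u,v$ are well-defined rationals with $u\ne v$, $u+v\ne 0$, and $u^{3}+v^{3}=2n$; hence $2n$ is a sum of two distinct rational cubes and $n$ is $(3,1)$-reflecting. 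Since $n$ is assumed positive and cube-free, $n\in\mathscr{R}'(3,1)$.

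The only delicate parts are bookkeeping: confirming that every affine rational point of $C^{2n}$ has $u+v\ne 0$ while $C_{-1728n^{2}}$ has no real point with $x=0$, so the stated correspondence is an honest bijection between the affine points on one side and the points with nonzero $x$-coordinate on the other (the remaining point $[1,-1,0]$ of $C^{2n}$ matching the identity $O$); verifying the chain of equivalences "distinct cubes $\leftrightarrow$ $y\ne 0$ $\leftrightarrow$ order $\ne 2$"; and checking that passage to $C_{-27n^{2}}$ is a group isomorphism so that point orders are preserved. None of this needs input beyond what precedes the statement, so I do not expect a genuine obstacle.
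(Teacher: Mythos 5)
Your proposal is correct and follows essentially the same route as the paper: unwind the definition to the condition $2n=u^{3}+v^{3}$ with $v\ne\pm u$, push through the correspondence $C^{2n}\to C_{-1728n^{2}}\cong C_{-27n^{2}}$ (the paper's composite map is $(u,v)\mapsto(6n/(v+u),9n(v-u)/(v+u))$, which is exactly your $x/4,y/8$ rescaling), and use $v\ne u\Leftrightarrow y\ne0\Leftrightarrow$ order $\ne2$. You are in fact slightly more thorough than the paper, which only writes out the forward inclusion and leaves the converse (via the inverse formulas and the observation that $x\ne0$) implicit.
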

\begin{proof}
 If $n\in\mathscr{R}'(3,1)$, then for some $v\neq\pm u\in\mathbb{Q}$
we have $v^{3}+u^{3}=2n$. Hence, $(u,v)$ is a rational point on
$C^{2n}$. Since $v\ne-u$, $(u,v)$ corresponds to $(x,y)=(6n\frac{1}{v+u},9n\frac{v-u}{v+u})$.
Since $v\ne u$ if and only if $y\ne0$, $(x,y)$ is a nontrivial
rational point on $C_{-27n^{2}}$ of order other than $2$.
\end{proof}
\begin{rem*}
Among all primitive $(3,1)$-reflecting numbers, $4$ is the special
one. The elliptic curve $C_{-432}$ has rank $0$ and its torsion
subgroup is isomorphic to $\mathbb{Z}/3$, in which the nontrivial
torsion points are $(12,\pm36)$, corresponding to the only rational
solutions $(v,u)=(0,2),(2,0)$ to $v^{3}+u^{3}=8$. In fact, $4$
is the only one with this property, as shown by the following lemma
and corollary.
\end{rem*}
\begin{lem*}
Let $N\ne0$ be an integer having no $6$th power divisors. Then,
a complete description of the torsion subgroup $(C_{N})_{\tors}(\mathbb{Q})$
is given by 
\[
(C_{N})_{\tors}(\mathbb{Q})=\begin{cases}
\{O,(-1,0),(0,\pm1),(2,\pm3)\}\cong\mathbb{Z}/6\mathbb{Z}, & \text{if }N=1,\\
\{O,(12,\pm36)\}\cong\mathbb{Z}/3\mathbb{Z}, & \text{if }N=-432,\\
\{O,(0,\pm\sqrt{N})\}\cong\mathbb{Z}/3\mathbb{Z}, & \text{if }N\ne1\text{ is a square},\\
\{O,(-\sqrt[3]{N},0)\}\cong\mathbb{Z}/2\mathbb{Z}, & \text{if }N\ne1\text{ is a cube},\\
\{O\}, & \text{otherwise}.
\end{cases}
\]
\end{lem*}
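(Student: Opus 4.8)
The plan is to first establish, uniformly in $N$, that $(C_N)_{\tors}(\mathbb{Q})$ has order dividing $6$ (hence is cyclic, as every group of order dividing $6$ is), and then to read off exactly when it contains a point of order $2$ or $3$ from the division polynomials of $y^2=x^3+N$. For the uniform bound, note that $C_N$ has discriminant $-432N^2$, so for all but finitely many primes $p$ the reduction is good and $(C_N)_{\tors}(\mathbb{Q})$ injects into $C_N(\mathbb{F}_p)$; when in addition $p\equiv 2\bmod 3$, cubing is a bijection of $\mathbb{F}_p$, whence a one-line character-sum computation gives $\#C_N(\mathbb{F}_p)=p+1$. Combining this with Dirichlet's theorem eliminates each forbidden possibility: a prime $\ell>3$ dividing the torsion order is ruled out by choosing $p\equiv 2\bmod 3$, $p\equiv 1\bmod\ell$, $p\nmid 6N$ (then $\ell\nmid p+1$); having $4$ divide the order is ruled out by $p\equiv 2\bmod 3$, $p\equiv 1\bmod 4$; and having $9$ divide the order is ruled out by $p\equiv 2\bmod 9$. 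Alternatively one may simply invoke the classical theorem of Fueter, which asserts this directly.

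With the bound in hand I would determine the prime-order torsion. A rational point of order $2$ is $(x,0)$ with $x^3=-N$; since the cubic $x^3+N$ has discriminant $-27N^2<0$ it has a single real root, so $C_N$ has a rational $2$-torsion point if and only if $N$ is a perfect cube, namely the unique point $(-\sqrt[3]{N},0)$. For order $3$, the third division polynomial of $y^2=x^3+N$ is $\psi_3(x)=3x(x^3+4N)$, so a rational $3$-torsion point has $x=0$ or $x^3=-4N$. The case $x=0$ requires $y^2=N$, i.e. $N$ a square, and gives $(0,\pm\sqrt{N})$. The case $x^3=-4N$ requires $4N$ to be a cube and $y^2=x^3+N=-3N$ to be a square; comparing $p$-adic valuations and using that $N$ has no $6$th power divisor, one checks that this happens only for $N=-432$, where it produces $(12,\pm36)$.

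Finally I would assemble the cases, all now forced by the previous two steps. A $6$th-power-free integer is both a square and a cube only for $N=1$; there the torsion contains $(-1,0)$ of order $2$ and $(0,1)$ of order $3$, hence equals $\mathbb{Z}/6\mathbb{Z}$, the order-$6$ points $(2,\pm3)$ being obtained from the group law. For $N=-432$ (which is not a cube) the torsion is exactly the $3$-torsion $\{O,(12,\pm36)\}\cong\mathbb{Z}/3\mathbb{Z}$; for a square $N\ne1$ it is $\{O,(0,\pm\sqrt{N})\}\cong\mathbb{Z}/3\mathbb{Z}$ (no $2$-torsion since a square $\ne1$ is not a cube); for a cube $N\ne1$ it is $\{O,(-\sqrt[3]{N},0)\}\cong\mathbb{Z}/2\mathbb{Z}$ (no $3$-torsion since a cube $\ne1$ is neither a square nor $-432$); and in all remaining cases there is neither $2$- nor $3$-torsion, so the divisor-of-$6$ bound forces triviality. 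The main obstacle is the uniform bound $\#(C_N)_{\tors}(\mathbb{Q})\mid 6$: the honest way to get it is the supersingular-reduction observation that $\#C_N(\mathbb{F}_p)=p+1$ for $p\equiv2\bmod3$, together with Dirichlet's theorem (or else a citation of Fueter), after which everything reduces to a finite computation with the $2$- and $3$-division polynomials.
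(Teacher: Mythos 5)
Your argument is correct and complete; the contrast with the paper is simply that the paper offers no argument at all, deferring entirely to Exercise 10.19 of Silverman's \emph{The Arithmetic of Elliptic Curves}. What you have written is essentially the intended solution of that exercise made explicit: the uniform bound $\#(C_N)_{\tors}(\mathbb{Q})\mid 6$ via the supersingular count $\#C_N(\mathbb{F}_p)=p+1$ for good primes $p\equiv 2\bmod 3$ (cubing being a bijection on $\mathbb{F}_p$ --- no character sum is even needed), combined with Dirichlet to kill any prime $\ell>3$ as well as $4$ and $9$; then the $2$-division condition $x^3=-N$ and the $3$-division polynomial $\psi_3=3x(x^3+4N)$ to locate the actual torsion. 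Your valuation analysis in the branch $x^3=-4N$, $y^2=-3N$ correctly isolates $N=-432$ as the unique $6$th-power-free value (forcing $v_2(N)=4$, $v_3(N)=3$, $N<0$, and trivial valuation elsewhere), and the final case assembly is forced, since a $6$th-power-free integer that is both a square and a cube must be $1$. The only point worth stating explicitly rather than implicitly is the injectivity of reduction on torsion at odd primes of good reduction, which is standard. In short: the paper buys brevity by citation; your write-up buys self-containedness at the cost of a page, and the two are mathematically the same proof.
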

\begin{proof}
See Exercise 10.19 of \cite{Silverman2009}. 
\end{proof}
\begin{cor}
If $n$ is positive and cube-free, then $C_{-27n^{2}}$ has nontrivial
torsion subgroup (isomorphic to $\mathbb{Z}/3\mathbb{Z}$) if and
only if $n=4$.
\end{cor}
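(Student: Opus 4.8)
The plan is to reduce to the Lemma by replacing $C_{-27n^{2}}$ with its $6$th-power-free model and then matching $n$ against the short exceptional list that appears there. Write $-27n^{2}=N'd^{6}$ with $d$ a positive integer and $N'$ an integer free of $6$th powers; such a factorization exists and is unique. The substitution $(x,y)\mapsto(x/d^{2},y/d^{3})$ is an isomorphism $C_{-27n^{2}}\cong C_{N'}$ over $\mathbb{Q}$, so $(C_{-27n^{2}})_{\tors}(\mathbb{Q})\cong(C_{N'})_{\tors}(\mathbb{Q})$ and the Lemma applies directly to $C_{N'}$. The only structural input I need is that $N'<0$, which excludes $N'=1$ as well as $N'$ being a nonzero square; the Lemma then leaves exactly two ways for the torsion of $C_{-27n^{2}}$ to be nontrivial --- either $N'=-432$, in which case the torsion is $\cong\mathbb{Z}/3\mathbb{Z}$, or $N'$ is a cube different from $1$, in which case it is $\cong\mathbb{Z}/2\mathbb{Z}$ --- and otherwise the torsion is trivial.

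It remains to translate each of these conditions on $N'$ into a condition on the cube-free integer $n$. If $N'=-432$, then $-27n^{2}=-432d^{6}$, i.e.\ $n^{2}=16d^{6}=(4d^{3})^{2}$, so $n=4d^{3}$; cube-freeness forces $d=1$ and hence $n=4$. Conversely, for $n=4$ one has $-27n^{2}=-432=-2^{4}3^{3}$, which is already free of $6$th powers, so $N'=-432$ and the Lemma yields $(C_{-432})_{\tors}(\mathbb{Q})=\{O,(12,\pm36)\}\cong\mathbb{Z}/3\mathbb{Z}$. If instead $N'$ is a cube different from $1$, then $-27n^{2}=N'd^{6}$ is a product of cubes and hence a perfect cube; since $27=3^{3}$ is a cube this forces $n^{2}$ --- and therefore, as $\gcd(2,3)=1$, also $n$ --- to be a cube, so $n=1$ by cube-freeness, and then $C_{-27}$ has torsion $\{O,(3,0)\}\cong\mathbb{Z}/2\mathbb{Z}$. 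Combining, $C_{-27n^{2}}$ has nontrivial rational torsion precisely for $n\in\{1,4\}$, and its torsion subgroup is isomorphic to $\mathbb{Z}/3\mathbb{Z}$ exactly when $n=4$; equivalently, $n=4$ is the unique cube-free $n$ for which $C_{-27n^{2}}$ carries a nontrivial rational torsion point of order other than $2$.

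I do not expect a genuine obstacle here, since all the delicate arithmetic is quoted from the Lemma. The only step requiring a little care is the passage to the $6$th-power-free model: for instance, when $9\mid n$ the prefactor $27$ itself supplies the $6$th power $3^{6}$, so $-27n^{2}$ is not $6$th-power-free and one has $N'=-27n^{2}/3^{6}$ instead of the naive $-27n^{2}$. Working throughout with the abstract representative $N'=-27n^{2}/d^{6}$ makes any casework on $v_{3}(n)$ unnecessary, and the argument then rests only on the sign condition $N'<0$ together with the elementary facts that $n^{2}=16d^{6}$ in the first case and that $27n^{2}$ is a perfect cube in the second.
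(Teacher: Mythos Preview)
Your proof is correct and follows the same strategy as the paper: reduce to the torsion classification Lemma and match against its short exceptional list. You are in fact more careful than the paper on two points. First, you pass to the $6$th-power-free model $C_{N'}$ before invoking the Lemma; the paper applies it directly to $-27n^{2}$, which fails to be $6$th-power-free when $9\mid n$ (though that case gives trivial torsion anyway). Second, and more substantively, the paper's proof asserts that $-27n^{2}$ is never a cube for positive cube-free $n$, which is false at $n=1$: there $-27=(-3)^{3}$ and $C_{-27}$ has torsion $\{O,(3,0)\}\cong\mathbb{Z}/2\mathbb{Z}$, as the paper itself records in a later remark. Your analysis correctly isolates this case and shows that while the torsion is nontrivial at $n=1$, it is $\mathbb{Z}/2\mathbb{Z}$ rather than $\mathbb{Z}/3\mathbb{Z}$, so the Corollary survives under the reading ``torsion $\cong\mathbb{Z}/3\mathbb{Z}$ iff $n=4$''; your sharper observation that nontrivial torsion occurs precisely for $n\in\{1,4\}$ is the accurate picture.
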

\begin{proof}
Since $n$ is positive and cube-free, $-27n^{2}$ is neither a square
nor a cube.  So $C_{-27n^{2}}$ has no rational points of order $2$,
and the torsion subgroup of $C_{-27n^{2}}$ is nontrivial (and isomorphic
to $\mathbb{Z}/3\mathbb{Z}$) if and only if $-27n^{2}=-432$, i.e.,
$n=4$. 
\end{proof}
Therefore, Proposition \ref{prop:R'(3,1):order} can be strengthened
in the following way. 
\begin{prop}
\label{prop:R'(3,1):rank} The set $\mathscr{R}'(3,1)$ consists of
the special $n=4$ and positive cube-free integers $n$ such that
the rank of $C_{-27n^{2}}$ is positive. 
\end{prop}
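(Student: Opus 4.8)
The plan is to combine Proposition \ref{prop:R'(3,1):order} with the torsion corollary just proved. Proposition \ref{prop:R'(3,1):order} says $n\in\mathscr{R}'(3,1)$ iff $n$ is positive and cube-free and $C_{-27n^2}$ carries a nontrivial rational point whose order is not $2$. Such a point either has infinite order, in which case $\rank C_{-27n^2}>0$, or it is a nontrivial torsion point of order $\ne 2$. The Corollary says that the only positive cube-free $n$ for which $(C_{-27n^2})_{\tors}(\mathbb{Q})$ is nontrivial is $n=4$ (and then the torsion is $\mathbb{Z}/3\mathbb{Z}$, with both nontrivial points of order $3\ne 2$). So the ``order $\ne 2$'' torsion case is exactly $n=4$, and every other element of $\mathscr{R}'(3,1)$ must come from a point of infinite order.

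Concretely: first I would take $n\in\mathscr{R}'(3,1)$ with $n\ne 4$. By Proposition \ref{prop:R'(3,1):order}, $C_{-27n^2}(\mathbb{Q})$ has a nontrivial point $P$ of order $\ne 2$. Since $-27n^2$ is negative, it is neither a square nor a cube, so by the Lemma $(C_{-27n^2})_{\tors}(\mathbb{Q})$ is either $\{O\}$ or, when $-27n^2=-432$ (i.e.\ $n=4$), $\mathbb{Z}/3\mathbb{Z}$; having excluded $n=4$, the torsion subgroup is trivial. Therefore $P$ has infinite order and $\rank C_{-27n^2}\ge 1$. Conversely, if $n$ is positive and cube-free and $\rank C_{-27n^2}>0$, then $C_{-27n^2}(\mathbb{Q})$ contains a point of infinite order, which is certainly nontrivial and not of order $2$, so $n\in\mathscr{R}'(3,1)$ by Proposition \ref{prop:R'(3,1):order}. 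Finally $n=4$ itself lies in $\mathscr{R}'(3,1)$ as already observed (it is the special one, with $(v,u)=(2,0)$ giving $v^3+u^3=8=2\cdot 4$), which finishes the characterization.

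There is essentially no obstacle here: the statement is a direct repackaging of the preceding Proposition and Corollary, the only point worth a sentence being that a torsion point of order $3$ is of order ``other than $2$'', so the $n=4$ case genuinely must be listed separately rather than being subsumed under the positive-rank case. One should also double-check the edge case that $C_{-27n^2}$ could a priori have no $\mathbb{Q}$-point of order $2$ at all for negative discriminant parameter — which is exactly why $-27n^2$ being neither a square nor a cube is invoked — so that the dichotomy ``infinite order vs.\ order $3$'' is exhaustive among nontrivial points of order $\ne 2$.
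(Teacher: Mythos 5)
Your argument is exactly the intended one: the paper gives no separate proof of this proposition, deriving it precisely by combining Proposition \ref{prop:R'(3,1):order} with the Lemma and Corollary on torsion, as you do. One justification in your write-up is wrong, though it does not damage the conclusion: a negative integer can perfectly well be a cube, and for $n=1$ we have $-27n^{2}=-27=(-3)^{3}$, so by the Lemma $(C_{-27})_{\tors}(\mathbb{Q})=\{O,(3,0)\}\cong\mathbb{Z}/2\mathbb{Z}$ is \emph{not} trivial, contrary to your claim that excluding $n=4$ forces trivial torsion. The argument survives because the only nontrivial torsion point in that case has order exactly $2$ and is therefore already excluded by the ``order other than $2$'' condition of Proposition \ref{prop:R'(3,1):order}; you should phrase the dichotomy as ``any nontrivial torsion point of order $\ne 2$ occurs only for $n=4$'' rather than ``the torsion is trivial for $n\ne 4$.''
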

\begin{rem*}
If $n=1$, then $C_{-27}$ has rank $0$ and torsion subgroup $\{O,(3,0)\}$.
Thus, $u^{3}+v^{3}=2$ only has one rational solution $(1,1)$. This
gives a modern interpretation of the theorem of Euler mentioned before. 
\end{rem*}
\begin{cor}
If $n=p$, where $p\equiv2\mod9$ is an odd prime number, or $n=p^{2}$,
where $p\equiv5\mod9$ is a prime number, then $n$ is a $(3,1)$-reflecting
number.
\end{cor}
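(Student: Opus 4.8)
The plan is to reduce the statement to a known case of Sylvester's problem via Proposition~\ref{prop:R'(3,1):rank}. In both cases $n=p$ and $n=p^{2}$, the integer $n$ is positive and cube-free (the exponent $2$ is below $3$), and $n\neq4$: indeed $p$ is an odd prime in the first case, while in the second $p^{2}=4$ would force $p=2\not\equiv5\bmod 9$. Hence, by the corollary stating that $C_{-27n^{2}}$ has nontrivial torsion only for $n=4$, the torsion subgroup of $C_{-27n^{2}}$ is trivial, and Proposition~\ref{prop:R'(3,1):rank} tells us it is enough to prove that $C_{-27n^{2}}$ has positive rank.

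To access the rank I would pass through the change of model recorded in the discussion preceding Proposition~\ref{prop:R'(3,1):order}: one has $C_{-27n^{2}}\cong C_{-1728n^{2}}=C_{-432(2n)^{2}}$, and $C_{-432(2n)^{2}}$ is the Weierstrass model of the cubic $C^{2n}:u^{3}+v^{3}=2n$, with the explicit $\mathbb{Q}$-birational correspondence between $C^{2n}$ and $C_{-27n^{2}}$ carrying the distinguished point $[1,-1,0]$ to the identity. That correspondence is therefore an isomorphism of elliptic curves, hence of Mordell--Weil groups, so $\rank C_{-27n^{2}}=\rank C^{2n}$. Now any representation $2n=u^{3}+v^{3}$ with $u,v\in\mathbb{Q}$ has $u\neq v$ (otherwise $n=u^{3}$, impossible since $n=p$ or $p^{2}$) and $u\neq-v$ (since $2n\neq0$), and thus corresponds to a point of $C_{-27n^{2}}$ with $y\neq0$, i.e.\ of order other than $2$, which is of infinite order because the torsion is trivial. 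So it suffices to show: $2p$ is a sum of two rational cubes when $p\equiv2\bmod 9$, and $2p^{2}$ is a sum of two rational cubes when $p\equiv5\bmod 9$.

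These last two statements are precisely Satgé's theorem \cite{Satge1987}, proved by an analogue of the Heegner point construction: the curve $x^{3}+y^{3}=2p$ has a rational point of infinite order for every prime $p\equiv2\bmod 9$, and $x^{3}+y^{3}=2p^{2}$ has one for every prime $p\equiv5\bmod 9$. Feeding this into the previous paragraph gives $\rank C_{-27p^{2}}>0$ and $\rank C_{-27p^{4}}>0$ respectively, so $n\in\mathscr{R}'(3,1)\subset\mathscr{R}(3,1)$ in both cases, which is the claim. The only serious input is Satgé's theorem; the rest is the change of model and the torsion bookkeeping already in hand, so the genuine obstacle --- if one insisted on a self-contained proof --- is exactly establishing these instances of Sylvester's conjecture, which is why I would simply cite it.
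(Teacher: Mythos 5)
Your proposal is correct and follows the same route as the paper: both reduce the claim to Satg\'e's theorem that $C^{2p}$ (for $p\equiv2\bmod 9$) and $C^{2p^{2}}$ (for $p\equiv5\bmod 9$) have infinitely many rational points, via the correspondence with $C_{-27n^{2}}$ established earlier in the section. You simply make explicit the cube-freeness and torsion bookkeeping that the paper leaves implicit.
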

\begin{proof}
This follows directly from Satg\'e's results in \cite{Satge1987}: if
$p\equiv2\mod9$ is an odd prime number, then $C^{2p}$ has infinitely
many rational points, and if $p\equiv5\mod9$ is a prime number, then
$C^{2p^{2}}$ has infinitely many rational points.
\end{proof}

\section{\label{sec:(2,2)}$(2,2)$-Reflecting Numbers}

 We begin with the following property of $(2,2)$-reflecting numbers,
which justify their alternative name, reflecting congruent numbers.
\begin{prop}
A $(2,2)$-reflecting number is a congruent number. 
\end{prop}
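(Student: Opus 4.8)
The plan is to unwind the definitions and exhibit, from a $(2,2)$-reflecting number, an arithmetic progression of three rational squares, which is the classical characterization of congruent numbers. Suppose $n$ is $(2,2)$-reflecting, so there exist $u,v\in\mathbb{Q}$ and $t\in\mathbb{Q}^{+}$ with $n-t^{2}=u^{2}$ and $n+t^{2}=v^{2}$. Then $u^{2},n,v^{2}$ is an arithmetic progression of three rational squares with common difference $t^{2}$, so by the standard equivalence (used already in the introduction to define congruent numbers) the square-free part of $t^{2}$ is a congruent number. But $t^{2}$ is itself a rational square, so $\llbracket t^{2}\rrbracket_{2}=1$, and that gives the wrong number; so a direct appeal to the $t^{2}$-as-common-difference form is not what we want.

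Instead I would divide through by $t^{2}$: from $n\pm t^{2}=v^{2},u^{2}$ we get $n t^{-2}\pm 1=(v/t)^{2},(u/t)^{2}$, i.e. $1\pm n t^{-2}$ are both rational squares. Setting $s=n t^{-2}\in\mathbb{Q}^{+}$, we have $1-s$ and $1+s$ both rational squares, so $(u/t)^{2},1,(v/t)^{2}$ is an arithmetic progression of three rational squares with common difference $s$. Hence $\llbracket s\rrbracket_{2}=\llbracket n t^{-2}\rrbracket_{2}$ is a congruent number. Since $t\in\mathbb{Q}^{+}$, multiplying $n$ by $t^{-2}$ changes it only by a rational square, so $\llbracket n t^{-2}\rrbracket_{2}=\llbracket n\rrbracket_{2}$, and therefore $\llbracket n\rrbracket_{2}$ is a congruent number. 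Finally, a positive integer is congruent if and only if its square-free part is congruent (congruence is invariant under scaling by rational squares), so $n$ itself is a congruent number.

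The only genuine content is the observation that $1-s$ and $1+s$ being rational squares exhibits $\llbracket s\rrbracket_{2}$ as the common difference of three rational squares — and this is precisely the content recorded in the proof of the earlier proposition relating $\mathscr{R}'(2,1)$-data to congruent numbers, so I would just cite that reasoning (clear denominators: writing $s=S^{2}t'/S^{2}$ appropriately, $U^{2},S^{2},V^{2}$ is an arithmetic progression with common difference a rational-square multiple of $\llbracket s\rrbracket_{2}$). There is no real obstacle here; the statement is essentially a bookkeeping consequence of the definitions together with the scaling-invariance of the congruent-number property. The one point to state carefully is the reduction to square-free parts, both to pass from $n t^{-2}$ to $n$ and to pass from $\llbracket n\rrbracket_{2}$ back to $n$; both directions use only that $n$ and $n d^{2}$ are simultaneously congruent or not for any $d\in\mathbb{Q}^{+}$. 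It is worth remarking, as the introduction already does with the example $n=5$, that the explicit witness $t$ for the reflecting-congruent property is generally different from the witness realizing $n$ as an area, so the proof is not a tautology even though it is short.
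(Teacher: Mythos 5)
There is a genuine gap, and it sits at the pivot of your argument. From $n-t^{2}=u^{2}$ and $n+t^{2}=v^{2}$, dividing by $t^{2}$ gives $nt^{-2}-1=(u/t)^{2}$ and $nt^{-2}+1=(v/t)^{2}$; that is, $s\pm1$ are rational squares for $s=nt^{-2}$, \emph{not} $1\pm s$. Indeed $1-s=-(u/t)^{2}\le0$, and it is a square only if $u=0$, which is impossible (it would force $v^{2}=2t^{2}$). Consequently the three numbers you exhibit do not form the configuration the classical characterization needs: the actual arithmetic progression is $(u/t)^{2},\,s,\,(v/t)^{2}$ with common difference $1$ and a \emph{non-square} middle term, whereas the congruent-number characterization requires a rational \emph{square} in the middle and (the square class of) $n$ as the common difference. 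This asymmetry between ``$n\pm t^{2}$ are squares'' and ``$z^{2}\pm n$ are squares'' is exactly the content of the proposition; it cannot be dissolved by rescaling, and the same confusion already appears in your first paragraph, where $u^{2},n,v^{2}$ is called a progression of three squares although $n$ is square-free.

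The step you are missing is the passage from $t$ to a genuine middle square, which the paper encodes in the map $z(t)=\frac{n^{2}+t^{4}}{2t\sqrt{n^{2}-t^{4}}}$ (here $\sqrt{n^{2}-t^{4}}=|uv|\in\mathbb{Q}^{*}$), via the identity
\[
\left(\frac{n^{2}+t^{4}}{2tuv}\right)^{2}\pm n=\left(\frac{n^{2}\pm2nt^{2}-t^{4}}{2tuv}\right)^{2},
\]
which does exhibit an arithmetic progression of three rational squares with common difference $n$; equivalently, $\left(\frac{n^{2}-t^{4}}{tuv},\frac{2nt^{2}}{tuv},\frac{n^{2}+t^{4}}{tuv}\right)$ is a rational right triangle of area $n$. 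Either of these would complete your argument elementarily, provided you also verify $uv\ne0$ (the irrationality of $\sqrt{2}$ rules out $u=0$). The paper instead clears denominators to get $nS^{2}-T^{2}=U^{2}$, $nS^{2}+T^{2}=V^{2}$, observes that $(-T^{2}/S^{2},TUV/S^{3})$ lies on $E_{n}:y^{2}=x^{3}-n^{2}x$, and shows this point is non-torsion, so $E_{n}$ has positive rank; the displayed identity above is precisely the duplication formula applied to that point. As written, however, your proof does not establish the statement.
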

\begin{proof}
Let $n$ be a $(2,2)$-reflecting number and $(S,T,U,V)$ be a primitive
solution to the following homogeneous equations 
\begin{equation}
nS^{2}-T^{2}=U^{2},\quad nS^{2}+T^{2}=V^{2}.\label{eq:(2,2)}
\end{equation}
Then, $(-T^{2}/S^{2},TUV/S^{3})$ is a rational point on the congruent
number elliptic curve 
\[
E_{n}:y^{2}=-x(n+x)(n-x)=x(x+n)(x-n)=x^{3}-n^{2}x.
\]
Since $S,T>0$, $V$ is positive. If $U=0$, then $nS^{2}=T^{2}$
and $V^{2}=2T^{2}$, but this is impossible since $\sqrt{2}$ is irrational.
So $TUV/S^{3}\ne0$ and we obtain a non-torsion point on $E_{n}(\mathbb{Q})$.
Hence, $E_{n}$ has a positive rank and $n$ is a congruent number. 
\end{proof}
From now on, each congruent number, if not otherwise specified, is
always assumed to be square-free. 
\begin{notation*}
A triple $(A,B,C)$ of positive integers is called a \emph{Pythagorean
triple} if $A^{2}+B^{2}=C^{2}$; and it is called \emph{primitive}
if in addition $\gcd(A,B,C)=1$. Each rational right triangle is similar
to a unique right triangle with its sides given by a primitive Pythagorean
triple $(A,B,C)$ and is denoted by a triple $(a,b,c)$ of the length
of its sides such that $a/A=b/B=c/C$. 
\end{notation*}
We always assume that $B$ is the even one in any primitive Pythagorean
triple $(A,B,C)$, since $A$ and $B$ must have different parity.
Therefore, we always attempt to make the middle term $b$ in $(a,b,c)$
correspond to the even number $B$ in $(A,B,C)$, unless we are uncertain
about this in some formulas.

Let $\ltriangle_{n}$ be the set of all rational right triangles $(a,b,c)$
with area $n$. Let $\mathscr{P}$ be the set of pairs $(P,Q)$ of
positive coprime integers $P>Q$ with different parity. Then, a primitive
Pythagorean triple can be constructed by Euclid's formula 
\[
(A,B,C)=(P^{2}-Q^{2},2PQ,P^{2}+Q^{2}).
\]
Conversely, any primitive Pythagorean triple is of this form. So $\mathscr{P}$
can be identified with the set of primitive Pythagorean triples. Then,
$PQ(P^{2}-Q^{2})$ and its square-free part $n$ are congruent numbers.
Let $R=\sqrt{PQ(P^{2}-Q^{2})/n}$. Then, $(a,b,c)=(A/R,B/R,C/R)$
 is a rational right triangle in $\ltriangle_{n}$.  Let $\mathscr{P}_{n}\subset\mathscr{P}$
consist of pairs $(P,Q)$ such that the square-free part of $PQ(P^{2}-Q^{2})$
is $n$. 

Let $t\in\mathbb{Q}^{+}$ be such that $n\pm t^{2}$ are both squares.
Since $(-t^{2},\pm t\sqrt{n^{2}-t^{4}})$ are two rational points
on $E_{n}(\mathbb{Q})$, we obtain an injective odd map 
\[
\varphi:\mathscr{T}_{n}:=\{t\in\mathbb{Q}^{*}\mid n\pm t^{2}\in\mathbb{Q}^{*2}\}\to E_{n}(\mathbb{Q});\ t\mapsto(-t^{2},t\sqrt{n^{2}-t^{4}}),
\]
whose image lies in $E_{n}(\mathbb{Q})\setminus(2E_{n}(\mathbb{Q})\cup E_{n}[2])$
(cf. \cite[Prop. 20, §1]{Koblitz2012}), and 
\[
\left(\frac{n^{2}-t^{4}}{t\sqrt{n^{2}-t^{4}}},\frac{2nt^{2}}{t\sqrt{n^{2}-t^{4}}},\frac{n^{2}+t^{4}}{t\sqrt{n^{2}-t^{4}}}\right)\in\ltriangle_{n}.
\]
On the other hand, let $z\in\mathbb{Q}^{+}$ be such that $z^{2}\pm n$
are both rational squares. Since $(z^{2},\pm z\sqrt{z^{4}-n^{2}})$
are two rational points on $E_{n}$, we obtain an injective odd map
\[
\psi:\mathscr{Z}_{n}=\{z\in\mathbb{Q}^{*}\mid z^{2}\pm n\in\mathbb{Q}^{*2}\}\to E_{n}(\mathbb{Q});\ z\mapsto(z^{2},-z\sqrt{z^{4}-n^{2}}),
\]
whose image lies in $2E_{n}(\mathbb{Q})\setminus\{O\}$ (cf. \cite[Prop. 20, §1]{Koblitz2012}).

Denote by $\mathscr{T}_{n}^{+}$ (resp. $\mathscr{Z}_{n}^{+}$) the
subset of positive numbers in $\mathscr{T}_{n}$ (resp. $\mathscr{Z}_{n}$).
Then, there is a one-to-one correspondence between the following sets:
\begin{equation}
\mathscr{Z}_{n}^{+}\leftrightarrow\{(x,y)\in2E_{n}(\mathbb{Q})\setminus\{O\}:y<0\}\leftrightarrow\ltriangle_{n}\leftrightarrow\mathscr{P}_{n},\label{eq:from Z_n+ to Pn}
\end{equation}
where the first correspondence is given by  $\psi$ and its inverse
$\sqrt{x}\mapsfrom(x,y)$, the second one given by (cf. \cite[Prop. 19, §1]{Koblitz2012})
\begin{align*}
(x,y) & \mapsto(\sqrt{x+n}-\sqrt{x-n},\sqrt{x+n}+\sqrt{x-n},2\sqrt{x}),\\
(Z^{2}/4,-|Y^{2}-X^{2}|Z/8) & \mapsfrom(X,Y,Z),
\end{align*}
and the last one given by Euclid's formula and a proper scaling. 

Since  $\left(\frac{n^{2}+t^{4}}{2t\sqrt{n^{2}-t^{4}}}\right)^{2}\pm n=\left(\frac{n^{2}\pm2nt^{2}-t^{4}}{2t\sqrt{n^{2}-t^{4}}}\right)^{2}$,
we obtain an odd map of sets 
\[
z:\mathscr{T}_{n}\to\mathscr{Z}_{n};\ t\mapsto z(t)=\frac{n^{2}+t^{4}}{2t\sqrt{n^{2}-t^{4}}}.
\]
In fact, we can apply the duplication formula 
\[
x([2]P)=\left(\frac{x^{2}+n^{2}}{2y}\right)^{2},\quad\forall P=(x,y)\in E_{n}
\]
to the rational points $P=(-t^{2},\pm t\sqrt{n^{2}-t^{4}})$ on $E_{n}$
and obtain
\[
x([2]P)=\frac{(n^{2}+t^{4})^{2}}{4t^{2}(n^{2}-t^{4})}=z(t)^{2}.
\]
Hence, we have the following commutative diagram 
\begin{equation}
\xymatrix{t\ar@{|->}[d] & \mathscr{T}_{n}\ar[r]^{z}\ar@{^{(}->}[d]_{\varphi} & \mathscr{Z}_{n}\ar@{_{(}->}[d]^{\psi} & z\ar@{|->}[d]\\
(-t^{2},t\sqrt{n^{2}-t^{4}}) & E_{n}(\mathbb{Q})\ar[r]_{[2]} & 2E_{n}(\mathbb{Q}) & (z^{2},-z\sqrt{z^{4}-n^{2}})
}
\label{eq:comm.diag.1}
\end{equation}

\begin{example*}
The first congruent number $n=5$ is also the first reflecting congruent
number. If $t=2$, then $z(t)=41/12$ and $n\pm t^{2},z(t)^{2}\pm n$
are all rational squares.
\end{example*}
Since $z(0+)=z(\sqrt{n}-)=+\infty$, as a real-valued function, certainly
\[
z(t)=\frac{n^{2}+t^{4}}{2t\sqrt{n^{2}-t^{4}}}=\frac{\sqrt{n^{2}-t^{4}}}{2t}+\frac{t^{3}}{\sqrt{n^{2}-t^{4}}}
\]
is not injective for $t\in(-\sqrt{n},0)\cup(0,\sqrt{n})$. However,
as a rational-valued function: 
\begin{prop}
\label{prop:Tn->Zn inj} The map $z:\mathscr{T}_{n}\to\mathscr{Z}_{n}$
is always injective.
\end{prop}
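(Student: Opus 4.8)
The plan is to exploit the commutative diagram \eqref{eq:comm.diag.1}: the map $z$ factors as $\psi^{-1}\circ[2]\circ\varphi$ on its image, so injectivity of $z$ will follow once we know that $[2]$ is injective on $\varphi(\mathscr{T}_{n})$ together with the injectivity of $\varphi$ and $\psi$ (both already recorded in the excerpt). Since $\psi$ is injective and $[2]\circ\varphi = \psi\circ z$, two elements $t_1,t_2\in\mathscr{T}_n$ with $z(t_1)=z(t_2)$ force $[2]\varphi(t_1)=[2]\varphi(t_2)$, i.e. $\varphi(t_1)-\varphi(t_2)\in E_n[2]$. The crux is therefore to show that the difference of two points in $\varphi(\mathscr{T}_n)$ can never be a nonzero $2$-torsion point.

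First I would recall the explicit description $\varphi(t)=(-t^2,\,t\sqrt{n^2-t^4})$, and note that its image avoids $E_n[2]$ and lies in $E_n(\mathbb{Q})\setminus 2E_n(\mathbb{Q})$ (the Koblitz reference cited above). The three nonzero $2$-torsion points of $E_n$ are $(0,0),(n,0),(-n,0)$. Suppose $\varphi(t_1)=\varphi(t_2)+T$ for some $T\in E_n[2]\setminus\{O\}$. Adding a $2$-torsion point is an explicit rational map on $x$-coordinates; for $E_n:y^2=x^3-n^2x$ one has, for instance, translation by $(0,0)$ sending $x\mapsto -n^2/x$, and translation by $(\pm n,0)$ sending $x \mapsto n(x\pm n)/(x\mp n)$ (I would write these out once from the group law). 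Applying each of these to $x=-t_2^2$ and setting the result equal to $-t_1^2$ gives, in each of the three cases, an algebraic relation between $t_1$ and $t_2$ — e.g. the case $T=(0,0)$ yields $t_1^2 t_2^2 = n^2$, and the other two cases yield $t_1^2 = n(n - t_2^2)/(n + t_2^2)$ or its sign-variant.

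The main obstacle — and the real content — is ruling out these surviving relations. The point is that $t_1,t_2\in\mathscr{T}_n$ are not arbitrary: each satisfies $n - t_i^2\in\mathbb{Q}^{*2}$ and $n+t_i^2\in\mathbb{Q}^{*2}$. I would substitute the relation into these square conditions and derive a contradiction. For $t_1^2 t_2^2 = n^2$: write $t_1^2 = n^2/t_2^2$, then $n - t_1^2 = n(t_2^2-n)/t_2^2$ being a square forces $t_2^2 - n$ to be (a square times $n$ times a square), but $n - t_2^2$ is already a nonzero square, so $n(t_2^2-n)$ and $n(n-t_2^2)$ are both squares, forcing $-1$ to be a square — impossible. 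The other two cases should collapse similarly: substituting $t_1^2 = n(n-t_2^2)/(n+t_2^2)$ into $n\pm t_1^2\in\mathbb{Q}^{*2}$ and clearing the square factor $n^2\pm t_2^4$ (which are squares since $(n-t_2^2)(n+t_2^2)=n^2-t_2^4$ and $n,t_2^2$ behave well) should again produce the obstruction $-1\in\mathbb{Q}^{*2}$, or else force $t_1=\pm t_2$ with $z$ odd giving the trivial identification. A clean alternative worth trying, which may avoid the case analysis entirely, is to argue directly on the level of $E_n(\mathbb{Q})$: since $\varphi(t)\notin 2E_n(\mathbb{Q})$, the classes of $\varphi(t_1),\varphi(t_2)$ in $E_n(\mathbb{Q})/2E_n(\mathbb{Q})$ are both nontrivial, and one computes via the standard $2$-descent map $\delta:E_n(\mathbb{Q})\to \mathbb{Q}^*/\mathbb{Q}^{*2}\times\mathbb{Q}^*/\mathbb{Q}^{*2}$ that $\delta(\varphi(t)) = (\text{square-free part of }(n-t^2)(-t^2),\ \ldots)$ is independent of $t$ up to the $\mathscr{T}_n$-conditions — if that class is always the same fixed element, then $\varphi(t_1)-\varphi(t_2)\in 2E_n(\mathbb{Q})$, which combined with it being $2$-torsion forces it to be $O$, and hence $t_1=\pm t_2$, and then oddness of $z$ finishes. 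I expect the descent-map computation to be the slick route but the direct $x$-coordinate manipulation to be the safe one; either way the single recurring obstruction is that each $t_i$ makes both $n-t_i^2$ and $n+t_i^2$ squares, and exploiting that pair of conditions is where the proof lives.
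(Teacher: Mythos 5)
Your primary route is essentially the paper's: use the commutativity of (\ref{eq:comm.diag.1}) and the injectivity of $\psi$ to reduce $z(t_1)=z(t_2)$ to $\varphi(t_1)-\varphi(t_2)\in E_n[2]$, then kill the three nonzero translates with the explicit $x$-coordinate formulas for addition of two-torsion combined with the conditions $n\pm t_i^2\in\mathbb{Q}^{*2}$. The paper packages this as listing the $x$-coordinates of the eight points $Q$ with $x([2]Q)=z^2$, namely $-t^2$, $nv^2u^{-2}$, $n^2t^{-2}$, $-nu^2v^{-2}$, and observing that only the first can be of the form $-t'^2$.

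Two of your worked cases carry sign slips, although the contradictions survive (and in fact become easier). Translation by $(0,0)$ sends $x=-t_2^2$ to $-n^2/x=n^2/t_2^2>0$, so the relation is $-t_1^2=n^2/t_2^2$, impossible outright because a negative number cannot equal a positive one; it is not $t_1^2t_2^2=n^2$. Similarly, one of the $(\pm n,0)$ translates has positive $x$-coordinate $n(n+t_2^2)/(n-t_2^2)$ and dies for the same trivial reason, while the one genuinely surviving case $t_1^2=n(n-t_2^2)/(n+t_2^2)=n u_2^2/v_2^2$ forces $n$ to be a rational square; the obstruction there is that $n$ is square-free and greater than $1$, not that $-1\in\mathbb{Q}^{*2}$. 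Your ``clean alternative'' is fully correct and arguably the tidiest version: $\kappa(\varphi(t))=(n-t^2,-t^2)=(1,-1)$ for every $t\in\mathscr{T}_n$ (the paper makes exactly this computation in the proof of Proposition \ref{prop:preimage of z}), so $\varphi(t_1)-\varphi(t_2)\in 2E_n(\mathbb{Q})\cap E_n[2]$, and this intersection is trivial because $\kappa$ is injective on $E_n(\mathbb{Q})/2E_n(\mathbb{Q})$ while $\kappa(T_1)=(2,-n)$, $\kappa(T_2)=(n,-1)$, $\kappa(T_3)=(2n,n)$ are all nontrivial for square-free $n>1$; injectivity of $\varphi$ and oddness of $z$ then finish. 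You should spell out that last triviality claim, which is the only point your sketch leaves implicit.
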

\begin{proof}
If $z\in\mathscr{Z}_{n}$, then there exists $P=(x,y)\in E_{n}(\mathbb{Q})$
such that $x([2]P)=z^{2}$ and the set $\Sigma=\{\pm P,\pm P+T_{1},\pm P+T_{2},\pm P+T_{3}\}$
consists of all rational points $Q\in E_{n}(\mathbb{Q})$ such that
$x([2]Q)=z^{2}$. Then, we have 
\[
x(\pm P+T_{1})=\frac{-n(x-n)}{x+n},\ x(\pm P+T_{2})=\frac{-n^{2}}{x},\ x(\pm P+T_{3})=\frac{n(x+n)}{x-n}.
\]
If $z$ has a preimage $t\in\mathscr{T}_{n}$, then $n-t^{2}=u^{2}$
and $n+t^{2}=v^{2}$ for some $u,v\in\mathbb{Q}^{*}$ and the $x$-coordinates
of points in $\Sigma$ are given by $-t^{2},nv^{2}u^{-2},n^{2}t^{-2},-nu^{2}v^{-2}$.
Since $n$ is not a square, $nu^{2}v^{-2}$ is not a rational square.
So there exists only one $t\in\mathscr{T}_{n}$ as the preimage of
$z$. Hence, $\mathscr{T}_{n}\to\mathscr{Z}_{n}$ is always injective.
\end{proof}
Let $\mathscr{P}_{n}'\subset\mathscr{P}_{n}$ consist of pairs $(P,Q)$
of positive coprime integers $P>Q$ with different parity such that
$P/n$, $Q$, $P-Q$, and $P+Q$ are all integer squares. 
\begin{thm}
A positive square-free integer $n$ is a reflecting congruent number
if and only if the set $\mathscr{P}_{n}'$ is not empty. 
\end{thm}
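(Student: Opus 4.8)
The plan is to translate the condition ``$n$ is a reflecting congruent number'' through the chain of bijections already established in this section, landing exactly on the non-emptiness of $\mathscr{P}_n'$. By definition, $n$ is reflecting congruent precisely when $\mathscr{T}_n^+ \neq \emptyset$, i.e., there is a positive rational $t$ with $n - t^2 = u^2$ and $n + t^2 = v^2$ for rationals $u, v$. I would start from such a $t$, clear denominators to get a primitive integer solution $(S,T,U,V)$ to the homogeneous system $nS^2 - T^2 = U^2$, $nS^2 + T^2 = V^2$, and analyze the arithmetic of this solution to produce a pair $(P,Q) \in \mathscr{P}_n'$. Conversely, given $(P,Q) \in \mathscr{P}_n'$, I would write down an explicit $t$ directly from the square roots of $P/n$, $Q$, $P-Q$, $P+Q$.

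For the forward direction, the key computation is this: adding and subtracting the two homogeneous equations gives $2nS^2 = U^2 + V^2$ and $2T^2 = V^2 - U^2 = (V-U)(V+U)$. Since $U, V$ have the same parity, write $V - U = 2a$, $V + U = 2b$ so $T^2 = 2ab$ with $\gcd(a,b)$ controlled by primitivity, forcing (up to swapping signs) $a = $ twice a square or a square and $b$ the complementary type; simultaneously $nS^2 = U^2 + V^2$ over $2$ combined with $V = a+b$, $U = b - a$ gives $nS^2 = a^2 + b^2$. The upshot is that $a$ and $b$ play the role of (scalars times) $Q$ and something involving $P$: one sets $P, Q$ from the factorization of $T^2 = 2ab$ and checks that $P/n$, $Q$, $P \pm Q$ are squares using $nS^2 = a^2+b^2$ and the square-freeness of $n$ (invoking that a sum of two squares has square-free part with no prime $\equiv 3 \bmod 4$, consistent with the already-noted fact that reflecting congruent numbers have all prime divisors $\equiv 1 \bmod 4$). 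One should also verify $(P,Q) \in \mathscr{P}_n$, i.e. that the square-free part of $PQ(P^2-Q^2)$ is $n$; this falls out because $PQ(P^2-Q^2) = PQ(P-Q)(P+Q)$ is $P/n$ times $n$ times three squares, hence $n$ times a square.

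For the converse, suppose $(P,Q) \in \mathscr{P}_n'$, so $P = n\alpha^2$, $Q = \beta^2$, $P - Q = \gamma^2$, $P + Q = \delta^2$ for nonnegative integers $\alpha,\beta,\gamma,\delta$. The natural guess is to take $t$ so that $n \mp t^2$ become $(\gamma\delta/\text{something})^2$ and $( \cdot )^2$: indeed $P^2 - Q^2 = \gamma^2\delta^2$ and $P^2 = n^2\alpha^4$, so $n^2\alpha^4 - Q^2 = \gamma^2\delta^2$, which rearranges to $n \cdot (n\alpha^4) = Q^2 + (\gamma\delta)^2$ — wait, more cleanly: from $P-Q=\gamma^2$ and $P+Q=\delta^2$ one gets $n - Q/\alpha^2 = \gamma^2/\alpha^2$ and $n + Q/\alpha^2 = \delta^2/\alpha^2$ after dividing $P = n\alpha^2$ through by $\alpha^2$. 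So setting $t = \beta/\alpha = \sqrt{Q}/\sqrt{P/n}$ gives $n - t^2 = (\gamma/\alpha)^2$ and $n + t^2 = (\delta/\alpha)^2$, both rational squares, and $t > 0$ provided $Q > 0$, which holds. Hence $t \in \mathscr{T}_n^+$ and $n$ is reflecting congruent.

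The main obstacle is the bookkeeping in the forward direction: correctly tracking the $2$-adic and odd-prime behavior of the primitive solution $(S,T,U,V)$ to peel off the right squares from $T^2 = 2ab$ and from $nS^2 = a^2 + b^2$, and in particular showing one can arrange $P/n$ to be an integer square rather than merely a rational square or an integer times a square. This requires using square-freeness of $n$ together with the structure of Pythagorean-type factorizations, and possibly passing through the parametrization $\mathscr{P}_n \leftrightarrow \ltriangle_n$ and the map $z : \mathscr{T}_n \to \mathscr{Z}_n$ from diagram \eqref{eq:comm.diag.1} to identify which element of $\mathscr{P}_n$ the point $\varphi(t)$ (or rather $[2]\varphi(t)$) corresponds to. I would expect the cleanest route to be: use $\psi$ and the correspondence \eqref{eq:from Z_n+ to Pn} to attach to $z(t) \in \mathscr{Z}_n^+$ a definite pair $(P,Q) \in \mathscr{P}_n$, then show the extra divisibility condition defining $\mathscr{P}_n'$ is exactly the condition that this $z(t)$ lies in the image of $z$, i.e., that $t \in \mathscr{T}_n$ exists — making the theorem essentially a reinterpretation of Proposition \ref{prop:Tn->Zn inj} and the injectivity of $\varphi$.
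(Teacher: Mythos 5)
Your converse direction is correct and is exactly the paper's argument: from $(P,Q)=(n\alpha^{2},\beta^{2})$ with $P-Q=\gamma^{2}$, $P+Q=\delta^{2}$ one takes $t=\beta/\alpha=\sqrt{nQ/P}$ and gets $n\pm t^{2}=(P\pm Q)/(P/n)$, a ratio of integer squares. The gap is in the forward direction, and it is not mere bookkeeping: the decomposition you sketch produces the wrong pair. Writing $V-U=2a$, $V+U=2b$ gives $T^{2}=2ab$ and $nS^{2}=a^{2}+b^{2}$, so $(b,a)$ are the Euclid parameters of the Pythagorean triple $(UV,T^{2},nS^{2})$ --- a triple whose \emph{hypotenuse} is $nS^{2}$ and whose associated square-free area is the square-free part of $ab(b^{2}-a^{2})=\tfrac{1}{2}T^{2}UV$, which is not $n$ in general. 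Concretely, for $n=5$, $t=2$ (so $(S,T,U,V)=(1,2,1,3)$) you get $(b,a)=(2,1)$, which parametrizes the $(3,4,5)$ triangle of area $6$ and lies in $\mathscr{P}_{6}$, not in $\mathscr{P}_{5}'$. No amount of peeling squares off $2ab$ recovers an element of $\mathscr{P}_{n}'$ from this pair.

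The observation you are missing is that the forward direction is just as immediate as the converse: take $(P,Q)=(nS^{2},T^{2})$ directly, where $t=T/S$ in lowest terms. Then $P/n=S^{2}$, $Q=T^{2}$, $P-Q=U^{2}$, and $P+Q=V^{2}$ are all integer squares straight from the defining equations $nS^{2}\mp T^{2}=U^{2},V^{2}$; the only points left to verify are $P>Q$ (i.e.\ $U\ne0$; $U=0$ would force $V^{2}=2T^{2}$, impossible), opposite parity, and $\gcd(P,Q)=1$. The last is where square-freeness of $n$ is genuinely used: $\gcd(nS^{2},T^{2})=\gcd(n,T)$, and a common prime divisor of $P$ and $Q$ would divide $U$, hence its square would divide $nS^{2}$, hence it would divide $S$, contradicting $\gcd(S,T)=1$. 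Your closing suggestion of routing through $\psi$, $\ltriangle_{n}$, and $\mathscr{P}_{n}$ can be made to work (the paper carries out essentially that computation after the theorem, showing $\kappa(x,y)=(P(P-Q),(P-Q)(P+Q))$ and analyzing the cases), but it is considerably heavier than the one-line identification $(P,Q)=(nS^{2},T^{2})$.
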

\begin{proof}
If suffices to show that there is a one-to-one correspondence 
\[
\mathscr{P}_{n}'\leftrightarrow\mathscr{T}_{n}^{+};\quad(P,Q)=(nS^{2},T^{2})\leftrightarrow t=\frac{T}{S}.
\]
For any $(P,Q)\in\mathscr{P}_{n}'$, we write $(P,Q)=(nS^{2},T^{2})$,
where $S,T$ are positive coprime integers, and let $t=T/S=\sqrt{nQ/P}$.
Then, $n\pm t^{2}=n\pm nQ/P=(P\pm Q)/(P/n)$ are all rational squares
and thus $t\in\mathscr{T}_{n}^{+}$. 

Conversely, any $t=T/S\in\mathscr{T}_{n}^{+}$ with $\gcd(S,T)=1$
corresponds to a unique pair $(P,Q)=(nS^{2},T^{2})\in\mathscr{P}_{n}'$.
Indeed, $nS^{2}\pm T^{2}=S^{2}(n\pm t^{2})$ are integer squares.
To show that $P$ and $Q$ are coprime, we consider $d:=\gcd(P,Q)$,
which is given by 
\[
\gcd(nS^{2},T^{2})=\gcd(n,T^{2})=\gcd(n,T),
\]
since $\gcd(S,T)=1$ and $n$ is square-free. If $d$ were greater
than $1$, then $0<U^{2}=nS^{2}-T^{2}$ would imply that $d\mid U$,
$d^{2}\mid nS^{2}$, and thus $d\mid S$, a contradiction.
\end{proof}
\begin{rem*}
 We obtain the following commutative diagram to extend the previous
one 
\begin{equation}
\xymatrix{\mathscr{P}_{n}'\ar@{_{(}->}[r]\ar@{<->}[d] & \mathscr{P}_{n}\ar@{<->}[d]\\
\mathscr{T}_{n}^{+}\ar@{^{(}->}[r]^{z} & \mathscr{Z}_{n}^{+}
}
\label{eq:comm.diag.2}
\end{equation}
So the map $\mathscr{T}_{n}\to\mathscr{Z}_{n}$ is never surjective
unless they are both empty.  For example, if $n$ is reflecting congruent,
for any $z\in\mathscr{Z}_{n}$ such that $\psi(z)\in4E_{n}(\mathbb{Q})$,
$z$ has no preimage in $\mathscr{T}_{n}$, since $[2]\circ\varphi(\mathscr{T}_{n})\subset2E_{n}(\mathbb{Q})\setminus4E_{n}(\mathbb{Q})$. 
\end{rem*}
But a congruent number may not be a reflecting congruent number. 
\begin{lem}
\label{lem:-1QuadRes} If a positive square-free number $n$ is reflecting
congruent, then $-1$ must be a quadratic residue modulo $n$. 
\end{lem}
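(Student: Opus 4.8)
The plan is to reduce the statement to the classical fact about sums of two squares already used in the proof of Proposition~\ref{prop:R'(2,1)}. Since $n$ is reflecting congruent, there is a (primitive) integer solution $(S,T,U,V)$ with $S,T>0$ to the homogeneous system $nS^{2}-T^{2}=U^{2}$, $nS^{2}+T^{2}=V^{2}$. I would use only the first equation, rewritten as $U^{2}+T^{2}=nS^{2}$, which exhibits $nS^{2}$ as a sum of two integer squares with $T^{2}>0$.

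Next, recall that the square-free part of a sum of two integer squares has no prime divisor congruent to $3$ modulo $4$: if a prime $p\equiv3\bmod4$ divides $a^{2}+b^{2}$, then $p\mid a$ and $p\mid b$, hence $p^{2}\mid a^{2}+b^{2}$, and iterating shows $v_{p}(a^{2}+b^{2})$ is always even. Applying this to $U^{2}+T^{2}=nS^{2}$, for every prime $p\equiv3\bmod4$ the valuation $v_{p}(nS^{2})$ is even. On the other hand, since $n$ is square-free, $v_{p}(nS^{2})=v_{p}(n)+2v_{p}(S)$ is odd precisely when $p\mid n$; equivalently, the square-free part of $nS^{2}$ equals $n$. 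Hence $n$ has no prime divisor $\equiv3\bmod4$, and by the Fact stated in Section~\ref{sec:(k,m)}, $-1$ is a quadratic residue modulo the square-free integer $n$.

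There is essentially no obstacle here: the whole argument is the valuation bookkeeping identifying the square-free part of $nS^{2}$ with $n$, together with the two-squares fact. Equivalently one can argue purely $p$-adically: were some prime $p\equiv3\bmod4$ to divide $n$, the left-hand side of $U^{2}+T^{2}=nS^{2}$ would have even $p$-adic valuation while the right-hand side has odd valuation, a contradiction. (The companion equation $nS^{2}+T^{2}=V^{2}$ is not needed, though it yields the same conclusion, and one also sees $V\equiv U\bmod 2$ with $V^{2}\ne\pm U^{2}$ along the way.)
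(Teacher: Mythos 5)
Your proof is correct and rests on the same key fact as the paper's: the paper simply cites the inclusion $\mathscr{R}'(2,2)\subset\mathscr{R}'(2,1)$ together with Proposition \ref{prop:R'(2,1)}, whose proof is exactly the two-squares valuation argument you carry out (applied there to $2nS_{0}^{2}=V^{2}+U^{2}$ rather than to $nS^{2}=U^{2}+T^{2}$). So this is essentially the paper's argument, just written out directly instead of quoted.
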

\begin{proof}
 This is immediate from Proposition \ref{prop:R'(2,1)} since $\mathscr{R}'(2,2)\subset\mathscr{R}'(2,1)$. 
\end{proof}
\begin{lem}
\label{lem:EvenNoReflect} A positive square-free even number $n$
is not reflecting congruent.
\end{lem}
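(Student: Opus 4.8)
The plan is to combine the two constraints already available: by Lemma \ref{lem:-1QuadRes}, a reflecting congruent $n$ must have $-1$ as a quadratic residue modulo $n$, and by the \emph{Fact} recalled earlier, this forces every odd prime divisor of the square-free number $n$ to be $\equiv 1 \bmod 4$. So the only way an even square-free $n$ could be reflecting congruent is if $n = 2m$ with $m$ odd, square-free, and all prime divisors of $m$ congruent to $1 \bmod 4$. I would then derive a contradiction from a $2$-adic analysis of the defining equations $nS^2 - T^2 = U^2$ and $nS^2 + T^2 = V^2$ of \eqref{eq:(2,2)}, using a primitive solution $(S,T,U,V)$ with $S,T>0$ (such a solution exists since $n$ is assumed reflecting congruent).

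First I would record parities. Adding the two equations gives $2nS^2 = U^2 + V^2$, and subtracting gives $2T^2 = V^2 - U^2$; the latter shows $U \equiv V \bmod 2$, and since the solution is primitive, $U,V$ must both be odd (if both were even, then $U^2+V^2 \equiv 0 \bmod 4$ would force $2 \mid nS^2$ with room to spare, and one checks $2 \mid U,V$ propagates to $2\mid T$ and $2\mid S$, contradicting primitivity). With $U,V$ both odd we have $U^2 \equiv V^2 \equiv 1 \bmod 8$, so $U^2 + V^2 \equiv 2 \bmod 8$ and $V^2 - U^2 \equiv 0 \bmod 8$. The second congruence says $2T^2 \equiv 0 \bmod 8$, i.e. $T^2 \equiv 0 \bmod 4$, so $T$ is even, say $T = 2T_1$; then $2T^2 = 8T_1^2$, i.e. $V^2 - U^2 = 8 T_1^2$. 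Now turn to $2nS^2 = U^2 + V^2 \equiv 2 \bmod 8$, which gives $nS^2 \equiv 1 \bmod 4$; since $\gcd(S,T)=1$ and $T$ is even, $S$ is odd, so $S^2 \equiv 1 \bmod 8$ and hence $n \equiv 1 \bmod 4$. This already contradicts $n$ being even, completing the proof.

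Alternatively, if one prefers to avoid leaning on Lemma \ref{lem:-1QuadRes}, the $2$-adic argument in the previous paragraph is self-contained: from a primitive solution one shows $U,V$ odd, $S$ odd, $T$ even, and then $2nS^2 \equiv 2 \bmod 8$ forces $n$ odd, contradicting the hypothesis that $n$ is even. The main obstacle — really the only subtle point — is the parity bookkeeping needed to conclude that $U,V$ are \emph{odd} rather than both even: one must verify carefully that a common factor of $2$ among $U,V,T,S$ cannot be avoided, which is where primitivity of the solution is essential, together with the fact that $n$ is square-free (so $4 \nmid n$). Everything else is a direct computation modulo $8$.
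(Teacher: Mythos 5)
Your argument is correct and is essentially the paper's own proof: a mod-$8$ (equivalently $2$-adic) analysis of the primitive solution of \eqref{eq:(2,2)} showing that all of $S,T,U,V$ would have to be even, against $\gcd(S,T,U,V)=1$; the only cosmetic difference is that you add and subtract the two equations and track parities, and thereby treat $n\equiv2$ and $n\equiv6\bmod 8$ uniformly, whereas the paper enumerates the residues of the squares mod $8$ and handles $n\equiv6\bmod 8$ via Lemma \ref{lem:-1QuadRes}. Your opening reduction via Lemma \ref{lem:-1QuadRes} is superfluous, as you yourself note, since the parity computation alone already yields $nS^{2}\equiv1\bmod 4$ and hence $n$ odd.
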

\begin{proof}
Since $n$ is square-free and even, $n\equiv2,6\mod8$. Suppose $(S,T,U,V)$
is a nontrivial primitive solution to the system (\ref{eq:(2,2)}).
If $n\equiv2\mod8$, then the system modulo by $8$ only has the following
solutions 
\[
(S^{2},T^{2},U^{2},V^{2})\equiv(0,0,0,0),(0,4,4,4),(4,0,0,0),\text{ or }(4,4,4,4)\mod8,
\]
which imply that $S,T,U,V$ are all even, a contradiction to $\gcd(S,T,U,V)=1$.
 The case in which $n\equiv6\mod8$ can be proved in the same way
or by Lemma \ref{lem:-1QuadRes} because $n/2\equiv3\mod4$ must have
a prime divisor $\equiv3\mod4$.
\end{proof}
Summarizing the two lemmas, we obtain the following 
\begin{prop}
For a positive square-free number $n$ to be reflecting congruent,
it is necessary that $n$ only has prime divisors $\equiv1\mod4$.
\end{prop}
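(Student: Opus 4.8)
The plan is to combine the two lemmas just proved with the elementary Fact recalled in Section~\ref{sec:(k,m)} that characterizes when $-1$ is a quadratic residue. So I would start by assuming that $n$ is a positive square-free reflecting congruent number. Lemma~\ref{lem:EvenNoReflect} tells us that no positive square-free even number is reflecting congruent, hence $n$ is odd. Lemma~\ref{lem:-1QuadRes} tells us that, $n$ being reflecting congruent, $-1$ is a quadratic residue modulo $n$.

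Next I would invoke the Fact: for a positive square-free integer $n$, the residue $-1$ is a quadratic residue modulo $n$ precisely when every odd prime divisor of $n$ is $\equiv 1 \mod 4$. Since $n$ has already been shown to be odd, every prime divisor of $n$ is odd, so the conclusion of the Fact applies to every prime divisor of $n$ without exception. Thus every prime divisor of $n$ is $\equiv 1 \mod 4$, which is exactly the asserted necessary condition.

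I do not anticipate any real difficulty: all of the content sits inside Lemma~\ref{lem:-1QuadRes} (which rests on the inclusion $\mathscr{R}'(2,2)\subset\mathscr{R}'(2,1)$ together with Proposition~\ref{prop:R'(2,1)}) and Lemma~\ref{lem:EvenNoReflect} (which rests on a reduction of the system (\ref{eq:(2,2)}) modulo $8$). The one point deserving a careful sentence in the write-up is the passage from ``odd prime divisor'' in the Fact to ``prime divisor'' without qualification; this is legitimate only after Lemma~\ref{lem:EvenNoReflect} has excluded the prime $2$, which is precisely why both lemmas are needed rather than just the quadratic-residue one.
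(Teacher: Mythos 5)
Your proof is correct and matches the paper exactly: the paper simply states the proposition as "summarizing the two lemmas," i.e., Lemma~\ref{lem:EvenNoReflect} rules out the prime $2$ and Lemma~\ref{lem:-1QuadRes} together with the quadratic-residue Fact handles the odd primes. Your explicit note that the evenness lemma is what lets you drop the word "odd" from the Fact's conclusion is a correct and worthwhile clarification of the paper's terse presentation.
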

\begin{example*}
The congruent number $5735=5\cdot31\cdot37$ is not reflecting, as
it has a prime divisor $31\equiv3\mod4$ and thus $-1$ is not a quadratic
residue modulo $5735$\footnote{May MU5735 R.I.P.}. 
\end{example*}
It is natural to consider the problem of reflecting congruent numbers
in local fields $\mathbb{Q}_{v}$, where $v$ is either a prime number
or $\infty$. Recall that for any $a,b\in\mathbb{Q}_{v}^{*}$, the
Hilbert symbol $(a,b)_{v}$ is defined to $1$ if $ax^{2}+by^{2}-z^{2}=0$
has a solution $(x,y,z)$ other than the trivial one $(0,0,0)$ in
$\mathbb{Q}_{v}^{3}$ and $-1$ otherwise. 

Let $n>1$ be a positive square-free integer. If $nS^{2}-T^{2}=V^{2}$
has a nontrivial solution in $\mathbb{Q}_{v}^{3}$, then $(n,-1)_{v}=1$.
If $p$ is an odd prime divisor of $n$, then 
\[
(n,-1)_{p}=\left(\frac{-1}{p}\right)=\begin{cases}
+1, & \text{if }p\equiv1\mod4,\\
-1, & \text{if }p\equiv3\mod4,
\end{cases}
\]
where $\left(\frac{*}{p}\right)$ is the Legendre symbol. Then, $n$
cannot have prime divisors $\equiv3\mod4$, which gives another proof
of Lemma \ref{lem:-1QuadRes}. If $2$ is a divisor of $n$, then
the proof of Lemma \ref{lem:EvenNoReflect} essentially shows that
(\ref{eq:(2,2)}) only has a trivial solution in $\mathbb{Z}_{2}$.
 Hence, $n$ can only has prime divisors congruent to $1$ modulo
$4$. Conversely, 
\begin{lem}
\label{lem:localsol} If a positive square-free number $n$ only has
prime divisors congruent to $1$ modulo $4$, then there exists $t\in\mathbb{Q}_{p}^{*}$
such that $n\pm t^{2}$ are squares in $\mathbb{Q}_{p}$, where $p$
is $\infty$, or $2$, or any prime number congruent to $1$ modulo
$4$, or any prime number congruent to $3$ modulo $4$ such that
$n$ is a quadratic residue modulo $p$. 
\end{lem}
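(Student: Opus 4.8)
The plan is to treat the four types of places separately, producing in each case an explicit $t\in\mathbb{Q}_p^*$ with $n-t^2$ and $n+t^2$ both squares in $\mathbb{Q}_p$. The guiding observation is that $n-t^2$ and $n+t^2$ are simultaneous squares in $\mathbb{Q}_p$ precisely when the pair $(n-t^2,n+t^2)$ avoids the nontrivial square class, so it will be convenient to reduce everything to counting argument modulo a suitable power of $p$ (for $p$ odd, modulo $p$; for $p=2$, modulo $8$ or $16$) together with Hensel's lemma. I would first record the easy place $p=\infty$: since $n>0$ is not a square, pick any rational $t$ with $0<t^2<n$, e.g. $t$ a small rational, so that $n\pm t^2>0$; these are squares in $\mathbb{R}$, so $\infty$ causes no trouble.

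Next I would handle the generic odd primes. For a prime $p\equiv 1\bmod 4$, or a prime $p\equiv 3\bmod 4$ with $\left(\frac{n}{p}\right)=1$, the unit $n$ (if $p\nmid n$) is a square in $\mathbb{Q}_p$, and I would look for $t\in\mathbb{Z}_p^*$ with $n-t^2$ and $n+t^2$ both nonzero squares. Writing $n\equiv a^2$, I want $a^2-t^2=(a-t)(a+t)$ and $a^2+t^2$ to be squares mod $p$; a clean way is to parametrize: set $t^2=a^2\cdot\frac{1-s^2}{1+s^2}$ so that $n-t^2=a^2\frac{2s^2}{1+s^2}$ and $n+t^2=a^2\frac{2}{1+s^2}$, whose product is $\left(\frac{2as}{1+s^2}\right)^2$ — hence $n-t^2$ is a square iff $n+t^2$ is, and both are squares iff $2(1+s^2)$ is a square in $\mathbb{Q}_p$. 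Since $\left(\mathbb{F}_p^*\right)$ has more than two elements and $1+s^2$ ranges over many values as $s$ varies, I can choose $s\in\mathbb{Z}_p$ making $2(1+s^2)$ a nonzero square mod $p$ (a short Chevalley–Warning or pigeonhole count; one must also arrange $1-s^2\neq 0$ so that $t\neq 0$), then lift by Hensel. The case $p\mid n$ (so $p\equiv 1\bmod 4$, as $n$ has only such prime divisors) needs a separate short argument: here $n=p n'$ with $p\nmid n'$, and I would instead seek $t$ with $v_p(t)\geq 1$, say $t=p^{1/2}\cdot(\text{unit})$ is not allowed in $\mathbb{Q}_p$, so rather I fix $t$ a $p$-adic unit and check $n\pm t^2\equiv \pm t^2\bmod p$ is a square iff $-1$ is a QR mod $p$, which holds since $p\equiv 1\bmod 4$; then Hensel-lift, choosing $t$ so that neither $n-t^2$ nor $n+t^2$ vanishes or has odd valuation — a finite check.

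Then comes $p=2$, which I expect to be the main obstacle, since squares in $\mathbb{Q}_2^*$ are detected only modulo $8$ (for units) and one must be careful with valuations. Because $n$ is odd and has all prime divisors $\equiv 1\bmod 4$, we get $n\equiv 1\bmod 4$, and in fact I claim one can choose $t$ with $v_2(t)=1$, say $t=2t_0$ with $t_0$ a $2$-adic unit: then $n-t^2=n-4t_0^2$ and $n+t^2=n+4t_0^2$ are both $\equiv n\bmod 4$, i.e. $\equiv 1\bmod 4$, and I need them $\equiv 1\bmod 8$, i.e. $n\equiv 1\bmod 8$ or $n\equiv 5\bmod 8$ handled by adjusting $t_0$ mod $2$ — but $4t_0^2\equiv 4\bmod 8$ always, so $n-t^2$ and $n+t^2$ are $n\mp 4\bmod 8$, one of which is $1$ and the other $5$ when $n\equiv 5\bmod 8$; that fails. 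So instead I would take $t$ a $2$-adic \emph{unit}: then $t^2\equiv 1\bmod 8$, so $n-t^2\equiv n-1\bmod 8$ and $n+t^2\equiv n+1\bmod 8$, which are $\equiv 0,2\bmod 8$ — not units, so I must track valuations: $n-t^2=(n-1)-(t^2-1)$ with $v_2(t^2-1)\geq 3$, and $v_2(n-1)\geq 2$; choosing $t_0$ appropriately modulo $16$ I can force $v_2(n-t^2)$ and $v_2(n+t^2)$ to be exactly $2$ with the unit part $\equiv 1\bmod 8$ in each — this is the delicate finite computation, splitting into the cases $n\equiv 1\bmod 8$ and $n\equiv 5\bmod 8$ and exhibiting an explicit residue of $t$ mod $16$ (or mod $32$) in each. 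Once the right residue class of $t$ is pinned down so that both $n\pm t^2$ lie in $(\mathbb{Q}_2^*)^2$ to the needed $2$-adic precision, Hensel's lemma (applied to $X^2-(n-t^2)$ and $X^2-(n+t^2)$) finishes the argument, and choosing $t\in\mathbb{Q}$ in that class makes $t$ simultaneously work — though since the lemma only asserts existence of $t\in\mathbb{Q}_p^*$ place by place, I need not globalize here. I would close by remarking that for a prime $p\equiv 3\bmod 4$ with $\left(\frac{n}{p}\right)=-1$ no such $t$ exists, explaining why the hypothesis list in the lemma is exactly the right one.
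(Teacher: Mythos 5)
Your case division matches the lemma, and the places $p=\infty$ and $p\mid n$ are handled correctly, but there are genuine gaps at the remaining places. First, for a prime $p\equiv 1\bmod 4$ with $p\nmid n$ you assert that the unit $n$ is a square in $\mathbb{Q}_p$; this is false in general (take $n=13$, $p=5$: $13\equiv 3\bmod 5$ is a nonresidue), so the parametrization $t^2=a^2\cdot\frac{1-s^2}{1+s^2}$ has no starting point $a$ for such primes, and that subcase is simply not covered. Even where $n$ \emph{is} a square mod $p$, you only arrange for $2(1+s^2)$ to be a nonzero square and for $1-s^2\neq 0$; for $t$ itself to exist in $\mathbb{Q}_p$ you also need $\frac{1-s^2}{1+s^2}$ (equivalently $1-s^4$, up to squares) to be a square, an extra condition on $s$ you never impose. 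This is repairable by a character-sum count, but that argument is absent; and for $p\equiv 3\bmod 4$ with $\left(\frac{n}{p}\right)=1$ the far simpler choice $t=p$, giving $n\pm t^2\equiv n\bmod p$, settles the case outright.

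Second, the $2$-adic case is derailed by an arithmetic slip with fatal consequences. For $n\equiv 5\bmod 8$ and $t=2t_0$ with $t_0$ odd you compute that one of $n\pm 4$ is $\equiv 5\bmod 8$, but $5+4=9\equiv 1\bmod 8$, so in fact \emph{both} $n\pm t^2$ are $\equiv 1\bmod 8$ and this choice already works. Having discarded it, you fall back on $t$ an odd unit, which provably cannot succeed: $n\equiv t^2\equiv 1\bmod 4$ forces $v_{2}(n+t^{2})=1$, so $n+t^{2}$ is never a square in $\mathbb{Q}_{2}$, and the promised ``delicate finite computation'' forcing $v_{2}(n+t^{2})=2$ has no solution. (For $n\equiv 1\bmod 8$ the correct choice is $4\mid t$, which you never consider.) For comparison, the paper sidesteps all of this with one global device: since every prime divisor of $n$ is $\equiv 1\bmod 4$, the Brahmagupta--Fibonacci identity lets one write $n=t^{2}+u^{2}$ in integers, so $n-t^{2}=u^{2}$ is a square at \emph{every} place for free and only $n+t^{2}$ needs local attention; the freedom in the representation (the parity of $t$ at $2$, a representation of $np^{2}$ with unit coordinates at $p\equiv 1\bmod 4$) then settles each place by a one-line congruence.
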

\begin{proof}
Any prime number congruent to $1$ modulo $4$ can be written as a
sum of two distinct positive integer squares. By the Brahmagupta-Fibonacci
identity 
\[
(x^{2}+y^{2})(z^{2}+w^{2})=(xz\mp yw)^{2}+(xw\pm yz)^{2},
\]
$n$ can be written as a sum of two distinct positive integer squares,
say, $n=t^{2}+u^{2}$.

Case 1: $p=\infty$. Clearly, $v^{2}=n+t^{2}$ is a square in $\mathbb{Q}_{\infty}=\mathbb{R}$. 

Case 2: $p=2$. Then, we can choose $t,u$ such that $u^{2}\equiv1\mod8$
and 
\[
t^{2}\equiv\begin{cases}
0\mod8, & \text{if }n\equiv1\mod8,\\
4\mod8, & \text{if }n\equiv5\mod8.
\end{cases}
\]
Then in both cases, $n+t^{2}\equiv1\mod8$ and thus $v^{2}=n+t^{2}$
is a square in $\mathbb{Z}_{2}$. 

Case 3: $p\equiv1\mod4$. If $p\mid n$, then $p\mid t$ if and only
if $p\mid u$; so $p$ cannot divide $t$, otherwise $p^{2}\mid n$.
Therefore, $n+t^{2}\equiv t^{2}\not\equiv0\mod p$ and thus $v^{2}=n+t^{2}$
is a square in $\mathbb{Z}_{p}$. If $p\nmid n$, then we can write
$p=a^{2}+b^{2}$, $p^{2}=x^{2}+y^{2}$ such that $x=a^{2}-b^{2}$
and $y=2ab$, and $np^{2}=t'^{2}+u'^{2}$ such that $p\nmid t'$ and
$p\nmid u'$. Indeed, we have 
\[
np^{2}=(t^{2}+u^{2})(x^{2}+y^{2})=(xt\mp yu)^{2}+(xu\pm yt)^{2}.
\]
If $p$ divides both $xt-yu$ and $xt+yu$, then $p\mid xt$ and $p\mid yu$.
Since $p\nmid x$ and $p\nmid y$, we have $p\mid t$, $p\mid u$,
and thus $p\mid n$, a contradiction. Then, $np^{2}+t'^{2}\equiv t'^{2}\not\equiv0\mod p$
and thus $u^{2}=n-t'^{2}/p^{2}=u'^{2}/p^{2}$ and $v^{2}=n+t'^{2}/p^{2}$
are squares in $\mathbb{Q}_{p}$.

Case 4: $p\equiv3\mod4$ such that $n$ is a quadratic residue modulo
$p$. Let $t=p$. Then, $n\pm t^{2}\equiv n\not\equiv0\mod p$. Thus
$u^{2}=n-t^{2}$ and $v^{2}=n+t^{2}$ are squares in $\mathbb{Z}_{p}$.

In any case, we can find $t$ in $\mathbb{Q}_{p}$ such that $n\pm t^{2}$
are squares in $\mathbb{Q}_{p}$.
\end{proof}
\begin{rem*}
If $p$ is a prime in the residue class $3$ modulo $4$ and $n$
is a quadratic nonresidue modulo $p$, then certain new conditions
that we do not know will be imposed on $n$ so that there exists $t\in\mathbb{Q}_{p}^{*}$
such that $n\pm t^{2}$ are squares in $\mathbb{Q}_{p}$. 
\end{rem*}
Next, we apply a complete $2$-descent on the elliptic curve $E_{n}$
to find a criterion for a congruent number to be reflecting congruent.
The discriminant of $E_{n}$ is $\Delta=64n^{6}$ and the torsion
subgroup of $E_{n}(\mathbb{Q})$ is 
\[
E_{n}[2]=\{T_{0}=O,T_{1}=(-n,0),T_{2}=(0,0),T_{3}=(n,0)\}.
\]
So $E_{n}$ has good reduction except at $2$ and prime divisors of
$n$. Let $S$ be the set of prime divisors of $n$ together with
$2$ and $\infty$. A complete set of representatives for 
\[
\mathbb{Q}(S,2)=\{b\in\mathbb{Q}^{*}/\mathbb{Q}^{*2}:v_{p}(b)\equiv0\pmod2\text{ for all }p\notin S\}
\]
is given by the set $\{\pm\prod_{i}p_{i}^{\varepsilon_{i}}\mid\varepsilon_{i}\in\{0,1\},p_{i}\in S\setminus\{\infty\}\}$.
We identify this set with $\mathbb{Q}(S,2)$. Then, we have the following
injective homomorphism 
\begin{align}
\kappa:E_{n}(\mathbb{Q})/2E_{n}(\mathbb{Q}) & \to\mathbb{Q}(S,2)\times\mathbb{Q}(S,2)\label{eq:kappa}\\
P=(x,y) & \mapsto\begin{cases}
(x-e_{1},x-e_{2}), & \text{if }x\ne e_{1},e_{2},\\
\left(\frac{e_{1}-e_{3}}{e_{1}-e_{2}},e_{1}-e_{2}\right), & \text{if }x=e_{1},\\
\left(e_{2}-e_{1},\frac{e_{2}-e_{3}}{e_{2}-e_{1}}\right), & \text{if }x=e_{2},\\
(1,1), & \text{if }P=O,
\end{cases}\nonumber 
\end{align}
where $e_{1}=-n$, $e_{2}=0$, $e_{3}=n$. A pair $(m_{1},m_{2})\in\mathbb{Q}(S,2)\times\mathbb{Q}(S,2)$,
not in the image of one of the three points $O$, $T_{1}$, $T_{2}$,
is the image of a point $P=(x,y)\in E_{n}(\mathbb{Q})/2E_{n}(\mathbb{Q})$
if and only if the equations 
\begin{equation}
\begin{cases}
e_{2}-e_{1} & =\ n\,=m_{1}y_{1}^{2}-m_{2}y_{2}^{2},\\
e_{3}-e_{1} & =2n=m_{1}y_{1}^{2}-m_{1}m_{2}y_{3}^{2},
\end{cases}\label{eq:2-cover}
\end{equation}
have a solution $(y_{1},y_{2},y_{3})\in\mathbb{Q}^{*}\times\mathbb{Q}^{*}\times\mathbb{Q}$.
If such a solution exists, then $P=(m_{1}y_{1}^{2}+e_{1},m_{1}m_{2}y_{1}y_{2}y_{3})$
is a rational point on $E_{n}$ such that $\kappa(P)=(m_{1},m_{2})$.
 Then, we obtain the first criterion of reflecting congruent numbers: 
\begin{thm}
\label{thm:criterionby2descent} A positive square-free integer $n$
is reflecting congruent if and only if one and thus all of $(1,-1),(2,n),(n,1),(2n,-n)$
lie in the image of $\kappa$. 
\end{thm}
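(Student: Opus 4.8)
The plan is to translate the existence of a reflecting-congruent representation of $n$ into the solvability of a system of the shape (\ref{eq:2-cover}) for a specific pair $(m_1,m_2)$, and then use the group structure of $E_n(\mathbb{Q})/2E_n(\mathbb{Q})$ together with the fact that $\kappa$ is an injective homomorphism to pass freely among the four listed pairs. First I would recall from the commutative diagram (\ref{eq:comm.diag.1}) and Proposition~\ref{prop:Tn->Zn inj} that $n$ is reflecting congruent if and only if $\mathscr{T}_n \neq \emptyset$, i.e. there is $t \in \mathbb{Q}^*$ with $n - t^2 = u^2$, $n + t^2 = v^2$ for some $u,v \in \mathbb{Q}^*$; and the map $\varphi$ sends such a $t$ to the point $P = (-t^2, t\sqrt{n^2-t^4}) = (-t^2, tuv) \in E_n(\mathbb{Q})$, whose image lies in $E_n(\mathbb{Q}) \setminus (2E_n(\mathbb{Q}) \cup E_n[2])$. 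So $n$ is reflecting congruent precisely when there is a rational point with $x$-coordinate $x = -t^2 < 0$ and $x \neq e_1, e_2$, such that both $x - e_1 = n - t^2 = u^2$ and $x - e_2 = -t^2$ are... — here is the key computation: $\kappa(P) = (x - e_1, x - e_2) = (n - t^2, -t^2)$, which in $\mathbb{Q}(S,2) \times \mathbb{Q}(S,2)$ equals $(1, -1)$ since $n - t^2 = u^2$ is a square and $-t^2 = -(t)^2$ differs from $-1$ by a square.

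Thus $n$ is reflecting congruent if and only if $(1,-1)$ lies in the image of $\kappa$: the forward direction is the computation just given, and conversely if $(1,-1) = \kappa(P)$ for some $P = (x,y) \in E_n(\mathbb{Q})$ then $x - e_2 = x \equiv -1$ and $x - e_1 = x + n \equiv 1$ in $\mathbb{Q}^*/\mathbb{Q}^{*2}$, so $x = -t^2$ for some $t \in \mathbb{Q}^*$ (note $x \neq 0$, else $(1,-1)$ would be the image of $T_2$, and $x \neq -n$ since $-n$ is not a negative square as $n$ is a positive nonsquare) and $n + x = n - t^2$ is a nonzero rational square; one checks $y \neq 0$ automatically because $x \notin \{e_1,e_2,e_3\}$, giving $n - t^2, n + t^2 \in \mathbb{Q}^{*2}$ and hence $t \in \mathscr{T}_n$. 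This handles the pair $(1,-1)$.

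For the equivalence with the other three pairs I would use that $\kappa$ is a homomorphism and compute the images of the $2$-torsion points from the definition of $\kappa$: taking $(e_1,e_2,e_3)=(-n,0,n)$, one gets $\kappa(T_1) = \bigl(\tfrac{e_1-e_3}{e_1-e_2}, e_1-e_2\bigr) = (2, -n)$, $\kappa(T_2) = \bigl(e_2-e_1, \tfrac{e_2-e_3}{e_2-e_1}\bigr) = (n, -1)$, and $\kappa(T_3) = \kappa(T_1)\cdot\kappa(T_2) = (2n, n)$, all taken modulo squares in $\mathbb{Q}(S,2)$. Since translating $P$ by a $2$-torsion point multiplies $\kappa(P)$ by the corresponding image, the coset $\kappa(P)\cdot\langle \kappa(T_1),\kappa(T_2)\rangle$ consists of the four pairs $(1,-1)$, $(1,-1)(2,-n) = (2, n)$, $(1,-1)(n,-1) = (n, 1)$, and $(1,-1)(2n,n) = (2n, -n)$ — exactly the four pairs in the statement. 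Because the image of $\kappa$ is a subgroup containing $\kappa(E_n[2])$, one of these four pairs lies in the image if and only if all four do, which is the assertion. The only point requiring care — and the main potential obstacle — is verifying the equalities among the $\kappa(T_i)$ in $\mathbb{Q}(S,2)$ (i.e. that $-n \equiv n$ and $2n^2 \cdot(-1) \equiv -n$ modulo squares after clearing, etc.), which is routine once one tracks signs and the square-free parts carefully; I would also double-check the edge cases $x \in \{e_1,e_2\}$ excluded above, noting that since $n$ is a positive nonsquare neither $(1,-1)$ nor its translates can be $\kappa$ of $O$, $T_1$, or $T_2$, so a preimage is genuinely a non-$2$-torsion point and the reconstruction $t = \sqrt{-x}$ is legitimate.
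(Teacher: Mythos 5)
Your proposal is correct and follows essentially the same route as the paper: both arguments reduce the four pairs to the single coset $(1,-1)\kappa(E_n[2])$ using that the image of $\kappa$ is a union of such cosets, and both establish the equivalence with $(1,-1)$ by the computation $\kappa(-t^{2},tuv)=(n-t^{2},-t^{2})=(1,-1)$ in one direction and the reconstruction $t=\sqrt{-x(P)}$ in the other. The only cosmetic difference is that the paper phrases the two directions through the homogeneous-space system (\ref{eq:2-cover}) with $(m_{1},m_{2})=(1,-1)$, whereas you compute $\kappa$ on the point directly and recover the third square from $(x-e_{1})(x-e_{2})(x-e_{3})=y^{2}$; the content is the same.
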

\begin{proof}
Since  $\kappa(T_{1})=(2,-n)$, $\kappa(T_{2})=(n,-1)$, and $\kappa(T_{3})=(2n,n)$,
one of 
\[
(1,-1),(2,n),(n,1),(2n,-n)\in(1,-1)\kappa(E_{n}[2])\subset\mathbb{Q}(S,2)\times\mathbb{Q}(S,2)
\]
lies in the image of $\kappa$ if and only if all of them lie in the
image of $\kappa$. 

If (\ref{eq:(2,2)}) has a nontrivial solution $(S,T,U,V)$ in integers,
then 
\[
(U/S)^{2}+(T/S)^{2}=n,\quad(U/S)^{2}+(V/S)^{2}=2n,
\]
i.e., $(y_{1},y_{2},y_{3})=(U/S,T/S,V/S)$ is a solution to (\ref{eq:2-cover})
for $m_{1}=1$ and $m_{2}=-1$, or in other words, $\kappa(-t^{2},\pm t\sqrt{n^{2}-t^{4}})=(1,-1)$,
where $t=T/S$. 

Conversely, if (\ref{eq:2-cover}) has a solution $(y_{1},y_{2},y_{3})\in\mathbb{Q}^{*}\times\mathbb{Q}^{*}\times\mathbb{Q}$
for $m_{1}=1$ and $m_{2}=-1$, then $P=(x,y)=(y_{1}^{2}-n,-y_{1}y_{2}y_{3})\in E_{n}(\mathbb{Q})$
and 
\[
\sqrt{-x}=\sqrt{n-y_{1}^{2}}=|y_{2}|
\]
is a rational number such that $n-y_{2}^{2}=y_{1}^{2}$ and $n+y_{2}^{2}=y_{3}^{2}$. 
\end{proof}
\begin{cor}
If $n$ is a reflecting congruent number, then $\mathscr{T}_{n}^{+}$
is infinite. 
\end{cor}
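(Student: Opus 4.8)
The plan is to show that the map $\varphi$ of the previous diagram identifies $\mathscr{T}_{n}$ with the entire fibre $\kappa^{-1}(1,-1)\subseteq E_{n}(\mathbb{Q})$ of the $2$-descent map, and that this fibre, nonempty by hypothesis, is a coset of the infinite group $2E_{n}(\mathbb{Q})$; infinitude of $\mathscr{T}_{n}$, and then of $\mathscr{T}_{n}^{+}$, will follow at once.

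First I would establish the two inclusions in $\varphi(\mathscr{T}_{n})=\kappa^{-1}(1,-1)$. For $t\in\mathscr{T}_{n}$ a direct computation gives $\kappa(\varphi(t))=(-t^{2}+n,\,-t^{2})=(1,-1)$ in $\mathbb{Q}(S,2)\times\mathbb{Q}(S,2)$, since $n-t^{2}$ and $-(-t^{2})=t^{2}$ lie in $\mathbb{Q}^{*2}$. Conversely, suppose $Q=(x,y)\in E_{n}(\mathbb{Q})$ satisfies $\kappa(Q)=(1,-1)$. The values $\kappa(O)=(1,1)$, $\kappa(T_{1})=(2,-n)$, $\kappa(T_{2})=(n,-1)$, $\kappa(T_{3})=(2n,n)$ all differ from $(1,-1)$ because $n$ is square-free and $n>1$; hence $Q$ is an affine point with $x\notin\{-n,0\}$, so $\kappa(Q)=(x+n,\,x)$. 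Then $-x=t^{2}$ for some $t\in\mathbb{Q}^{*}$ and $n-t^{2}=x+n\in\mathbb{Q}^{*2}$, and from $y^{2}=x^{3}-n^{2}x=t^{2}(n^{2}-t^{4})$ together with $n-t^{2}\in\mathbb{Q}^{*2}$ one deduces $n+t^{2}\in\mathbb{Q}^{*2}$; thus $t\in\mathscr{T}_{n}$ and $Q=\varphi(\pm t)$. With the injectivity of $\varphi$ already recorded, this makes $\varphi$ a bijection $\mathscr{T}_{n}\to\kappa^{-1}(1,-1)$.

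Next I would invoke that $\kappa$ is a homomorphism which is injective on $E_{n}(\mathbb{Q})/2E_{n}(\mathbb{Q})$, so the fibre $\kappa^{-1}(1,-1)$ in $E_{n}(\mathbb{Q})$ is either empty or a single coset of $2E_{n}(\mathbb{Q})$. It is nonempty, since $n$ being reflecting congruent means exactly $(1,-1)\in\Ima\kappa$ by Theorem \ref{thm:criterionby2descent}. Moreover $E_{n}$ has positive rank: any point in $\varphi(\mathscr{T}_{n})$ lies outside $E_{n}[2]$, which is the whole torsion subgroup of $E_{n}(\mathbb{Q})$, hence is a point of infinite order (equivalently, $n$ is a congruent number, as shown at the start of this section). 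Therefore $2E_{n}(\mathbb{Q})$ is infinite, so its coset $\kappa^{-1}(1,-1)$ is infinite, so $\mathscr{T}_{n}$ is infinite; and since $t\mapsto-t$ is a fixed-point-free involution of $\mathscr{T}_{n}$ that interchanges $\mathscr{T}_{n}^{+}$ with its complement, $\mathscr{T}_{n}^{+}$ is infinite as well.

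The step I expect to be the real content, as opposed to bookkeeping, is the converse inclusion $\kappa^{-1}(1,-1)\subseteq\varphi(\mathscr{T}_{n})$: one must rule out that a point with $\kappa$-value $(1,-1)$ merely be $2$-equivalent to a point of $\varphi(\mathscr{T}_{n})$ rather than lying in it. This succeeds cleanly here only because the two coordinates of $\kappa$ on an affine point (away from $T_{1},T_{2}$) are $x-e_{1}=x+n$ and $x-e_{2}=x$, so $\kappa(Q)=(1,-1)$ forces $x=-t^{2}$ and $n-t^{2}\in\mathbb{Q}^{*2}$ outright, after which the Weierstrass relation $y^{2}=t^{2}(n^{2}-t^{4})$ hands back $n+t^{2}\in\mathbb{Q}^{*2}$. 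Everything else is formal manipulation of the Mordell--Weil group and the previously established properties of $\varphi$, $\psi$, and $\kappa$.
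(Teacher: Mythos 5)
Your proof is correct and follows essentially the same route as the paper: the paper takes $P=\varphi(t_0)$ and observes that the coset $P+2E_n(\mathbb{Q})$ is infinite (positive rank) and pulls back into $\mathscr{T}_n$ via $\varphi^{-1}$, which is exactly your identification of $\varphi(\mathscr{T}_n)$ with the coset $\kappa^{-1}(1,-1)$ of $2E_n(\mathbb{Q})$. You merely make explicit the step the paper leaves implicit (that every point with $\kappa$-value $(1,-1)$ has a $\varphi$-preimage, via the converse direction of Theorem \ref{thm:criterionby2descent}), and your verification of that step is sound.
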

\begin{proof}
Indeed, $\mathscr{T}_{n}$ contains at least two elements, say $\pm t_{0}$.
Let $P=\varphi(t_{0})=(-t_{0}^{2},t_{0}\sqrt{n^{2}-t_{0}^{4}})\in E_{n}(\mathbb{Q})$.
Then, $\{\varphi^{-1}(Q)\mid Q\in P+2E_{n}(\mathbb{Q})\}$ is an infinite
subset of $\mathscr{T}_{n}$. Hence, $\mathscr{T}_{n}$ and thus $\mathscr{T}_{n}^{+}$
are infinite. 
\end{proof}
\begin{example*}
Although  $n=205=5\cdot41\equiv5\mod8$ is a square-free congruent
number, which only has prime divisors $\equiv1,5\mod8$, it is not
reflecting. Indeed, $E_{205}(\mathbb{Q})$ has rank $1$ and a torsion-free
generator $(x,y)=(245,2100)$, but one has $\kappa(x,y)=(2,5)$, which
is different from $(1,-1),(2,n),(n,1),(2n,-n)$. 
\end{example*}
Given any $(P,Q)\in\mathscr{P}_{n}$, we get a rational right triangle
$(a,b,c)$ in $\ltriangle_{n}$ of the form $(A/R,B/R,C/R)$, where
$(A,B,C)=(P^{2}-Q^{2},2PQ,P^{2}+Q^{2})$ and $R=\sqrt{PQ(P^{2}-Q^{2})/n}$.
Then, by (\ref{eq:from Z_n+ to Pn}), $(a,b,c)$ corresponds to a
rational point $(c^{2}/4,-|b^{2}-a^{2}|c/8)\in2E_{n}(\mathbb{Q})$,
which by the duplication formula  is the double of the rational point
$(x,y)=(\frac{nb}{c-a},\frac{2n^{2}}{c-a})\in E_{n}(\mathbb{Q})$.\footnote{There are $4$ such rational points in total, but their difference
lies in $E_{n}[2]$ and a different choice of such a point leads to
the same conclusion, that is, $(P,Q)\in\mathscr{P}_{n}'$.} Then, we have  
\begin{align*}
\kappa(x,y) & =\left(\frac{nb}{c-a}+n,\frac{nb}{c-a}\right)=\left(n\frac{B+C-A}{C-A},n\frac{B}{C-A}\right)\\
 & =\left(PQ(P^{2}-Q^{2})\frac{2(P+Q)Q}{2Q^{2}},PQ(P^{2}-Q^{2})\frac{2PQ}{2Q^{2}}\right)\\
 & =(P(P-Q),(P-Q)(P+Q)).
\end{align*}
If $n$ is a reflecting congruent number and $\kappa(x,y)$ is one
of $(1,-1)$, $(2,n)$, $(n,1)$, and $(2n,-n)$, then $\kappa(x,y)$
is either $(2,n)$ or $(n,1)$ since $P>Q>0$.

Case 1: $\kappa(x,y)=(2,n)$. Then, the square-free part of $P^{2}-Q^{2}$
is $n$. It follows that $PQ$ is a square. Since $P$ and $Q$ are
coprime, both of them are squares. Then, the square-free part of $P(P-Q)$
and $P-Q$ could never be $2$, since $P$ and $Q$ have different
parity. So this case never happens.

Case 2: $\kappa(x,y)=(n,1)$. Then, $P^{2}-Q^{2}$ is a square and
the square-free part of $(P-Q)P$ is $n$. It follows that the square-free
parts of $PQ$ is $n$ and $(P+Q)Q$ is a square. Since $P$ and $Q$
are coprime, so are $P+Q$ and $Q$. So $P+Q$ and $Q$ are both squares
and thus $P-Q$ is also a square. Moreover, the square-free part of
$P$ is $n$. 

In a word, we must have $(P,Q)\in\mathscr{P}_{n}'$. Hence, only the
subset of $\mathscr{Z}_{n}^{+}$, which corresponds to $\mathscr{P}_{n}'$
as in \ref{eq:from Z_n+ to Pn}, have preimages in $\mathscr{T}_{n}^{+}$,
which explains (\ref{eq:comm.diag.2}) again.  

Now we can give a description of the image of the map $z:\mathscr{T}_{n}\to\mathscr{Z}_{n}$. 
\begin{prop}
\label{prop:preimage of z} Let $n$ be a reflecting congruent number
and define 
\[
\Sigma(z)=\{P\in E_{n}(\mathbb{Q})\mid[2]P=\pm\psi(z)\}=\{P\in E_{n}(\mathbb{Q})\mid x([2]P)=z^{2}\},
\]
for any $z\in\mathscr{Z}_{n}$. Then, the following conditions are
equivalent: 
\begin{enumerate}
\item \label{enu:preimage} $z\in\mathscr{Z}_{n}$ has a preimage in $\mathscr{T}_{n}$;
\item \label{enu:exist} $\exists P\in\Sigma(z)$, $\kappa(P)\in(1,-1)\kappa(E_{n}[2])$;
\item \label{enu:for all} $\forall P\in\Sigma(z)$, $\kappa(P)\in(1,-1)\kappa(E_{n}[2])$.
\end{enumerate}
So the image of $z:\mathscr{T}_{n}\to\mathscr{Z}_{n}$ is the following
subset of $\mathscr{Z}_{n}$:
\[
\{z\in\mathscr{Z}_{n}\mid\kappa(\Sigma(z))\subset(1,-1)\kappa(E_{n}[2])\}.
\]
\end{prop}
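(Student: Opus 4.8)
The plan is to prove the equivalence of the three conditions by a cyclic chain \ref{enu:for all} $\Rightarrow$ \ref{enu:exist} $\Rightarrow$ \ref{enu:preimage} $\Rightarrow$ \ref{enu:for all}. The implication \ref{enu:for all} $\Rightarrow$ \ref{enu:exist} is immediate, since $\Sigma(z)$ is nonempty: by definition of $\mathscr{Z}_n$ there is some $z\in\mathbb{Q}^*$ with $z^2\pm n\in\mathbb{Q}^{*2}$, hence $\psi(z)\in 2E_n(\mathbb{Q})\setminus\{O\}$, and any $P$ with $[2]P=\psi(z)$ lies in $\Sigma(z)$.

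For \ref{enu:exist} $\Rightarrow$ \ref{enu:preimage}: suppose $P\in\Sigma(z)$ with $\kappa(P)\in(1,-1)\kappa(E_n[2])$. Then $P+T_i$ for the appropriate $T_i\in E_n[2]$ has $\kappa(P+T_i)=(1,-1)$, and $P+T_i$ still lies in $\Sigma(z)$ because $[2](P+T_i)=[2]P=\pm\psi(z)$. By Theorem \ref{thm:criterionby2descent} and its proof, a point with $\kappa$-image exactly $(1,-1)$ is of the form $(y_1^2-n,-y_1y_2y_3)$ with $n-y_2^2=y_1^2$ and $n+y_2^2=y_3^2$, so $t:=|y_2|\in\mathscr{T}_n$. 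It remains to check $z(t)=z$, i.e.\ that $t$ is a preimage of $z$ under the map $z:\mathscr{T}_n\to\mathscr{Z}_n$. This follows from the commutative diagram \eqref{eq:comm.diag.1}: $\varphi(t)=(-t^2,\pm t\sqrt{n^2-t^4})$ differs from $P+T_i$ by a point of $E_n[2]$ (both have the same $\kappa$-image $(1,-1)$ modulo the kernel, and in fact $x(\varphi(t))=-y_2^2=-t^2=x(P+T_i)$ forces $\varphi(t)=\pm(P+T_i)$), so $x([2]\varphi(t))=x([2](P+T_i))=z^2$, whence $z(t)=x([2]\varphi(t))^{1/2}$ equals $z$ up to sign; since both are in $\mathscr{Z}_n$ and the map $z\mapsto z$ is the composite through $x\circ[2]\circ\varphi$, we get $z(t)=z$.

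For \ref{enu:preimage} $\Rightarrow$ \ref{enu:for all}: suppose $z=z(t)$ for some $t\in\mathscr{T}_n$. Let $Q\in\Sigma(z)$ be arbitrary, so $x([2]Q)=z^2=x([2]\varphi(t))$. Since the fibre of $x\circ[2]$ over $z^2$ is a single coset of $E_n[2]$ together with its negative, $Q=\pm\varphi(t)+T_j$ for some $j$; then $\kappa(Q)=\kappa(\varphi(t))\cdot\kappa(T_j)=(1,-1)\cdot\kappa(T_j)\in(1,-1)\kappa(E_n[2])$, using that $\kappa$ is a homomorphism and $\kappa(-\varphi(t))=\kappa(\varphi(t))$ in $E_n(\mathbb{Q})/2E_n(\mathbb{Q})$. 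The displayed description of the image of $z$ then follows formally: $z\in\mathscr{Z}_n$ lies in the image iff \ref{enu:preimage} holds iff \ref{enu:for all} holds iff $\kappa(\Sigma(z))\subset(1,-1)\kappa(E_n[2])$.

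I expect the main obstacle to be the careful bookkeeping in \ref{enu:exist} $\Rightarrow$ \ref{enu:preimage}: one must verify that the particular point produced by the $2$-descent solution with $\kappa$-image $(1,-1)$ genuinely lies in the fibre $\Sigma(z)$ over the given $z$, rather than over some other element of $\mathscr{Z}_n$ mapping to the same square $z^2$ only up to the $E_n[2]$-ambiguity. This is where the explicit $x$-coordinate identities for $x(\pm P+T_i)$ recorded in the proof of Proposition \ref{prop:Tn->Zn inj}, together with the fact that $n$ is not a square (so $nu^2v^{-2}$ is never a square), pin down the correct preimage and rule out the spurious ones.
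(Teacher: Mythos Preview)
Your argument is correct and follows essentially the same route as the paper: the paper proves (\ref{enu:preimage})$\Rightarrow$(\ref{enu:exist}), (\ref{enu:exist})$\Leftrightarrow$(\ref{enu:for all}), and (\ref{enu:for all})$\Rightarrow$(\ref{enu:preimage}), while you run the cycle (\ref{enu:for all})$\Rightarrow$(\ref{enu:exist})$\Rightarrow$(\ref{enu:preimage})$\Rightarrow$(\ref{enu:for all}), but the substance is identical---$\Sigma(z)$ is a single $\pm E_n[2]$-coset, so all its elements share a $\kappa$-class, and a point with $\kappa$-image exactly $(1,-1)$ has $x$-coordinate $-t^2$ with $t\in\mathscr{T}_n$. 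One small clean-up: in your (\ref{enu:exist})$\Rightarrow$(\ref{enu:preimage}) step you only obtain $z(t)^2=z^2$, and the sentence ``since both are in $\mathscr{Z}_n$\ldots we get $z(t)=z$'' does not resolve the sign (both $z$ and $-z$ lie in $\mathscr{Z}_n$); the paper handles this by taking the preimage to be $\operatorname{sign}(z)\cdot\sqrt{-x(P)}$, and you should do the same rather than fixing $t=|y_2|>0$.
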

\begin{proof}
(\ref{enu:preimage})$\Rightarrow$(\ref{enu:exist}). If $t\in\mathscr{T}_{n}$
is a preimage of $z\in\mathscr{Z}_{n}$, then $P=(-t^{2},\pm t\sqrt{n^{2}-t^{4}})\in\Sigma(z)$
and $\kappa(P)=(n-t^{2},-t^{2})=(1,-1)$. 

(\ref{enu:exist})$\Leftrightarrow$(\ref{enu:for all}).  For any
$P,Q\in\Sigma(z)$, we have either $P+Q\in E_{n}[2]$ or $P-Q\in E_{n}[2]$,
and thus $\kappa(P)$ and $\kappa(Q)$ lie in the same coset of $\kappa(E_{n}[2])$. 

(\ref{enu:for all})$\Rightarrow$(\ref{enu:preimage}).  Since $\kappa(P)\in(1,-1)\kappa(E_{n}[2])$
for any $P\in\Sigma(z)$, we may replace $P$ by $P+T$ for some $T\in E_{n}[2]$
if necessary so that $\kappa(P)=(1,-1)$. Then, $z$ has a preimage
$\sqrt{-x(P)}$ multiplied by the sign of $z$ in $\mathscr{T}_{n}$. 
\end{proof}

\begin{example*}
Note that $n=41$ is the least prime congruent number such that $E_{n}$
has rank $2$. Let $P=(-9,120)$ be one of the torsion-free generators
of $E_{n}(\mathbb{Q})$. Then, $z=881/120\in\mathscr{Z}_{n}$ is a
square root of $x([2]P)$ and we have 
\[
z^{2}-n=(431/120)^{2},\quad z^{2}+n=(1169/120)^{2}.
\]
However, we have $\kappa(P)=(2,-1)\notin\{(1,-1),(2,41),(41,1),(82,-41)\}$.
So $z$ has no preimage in $\mathscr{T}_{n}$.  Later, we will show
that $n=41$ is also reflecting congruent. 
\end{example*}
Our criterion in Theorem \ref{thm:criterionby2descent} is essentially
based on computing generators for the weak Mordell-Weil group $E_{n}(\mathbb{Q})/2E_{n}(\mathbb{Q})$,
which fits in the short exact sequence
\begin{equation}
0\to E_{n}(\mathbb{Q})/2E_{n}(\mathbb{Q})\to S^{(2)}(E_{n}/\mathbb{Q})\to\sha(E_{n}/\mathbb{Q})[2]\to0,\label{eq:SES}
\end{equation}
where $S^{(2)}(E_{n}/\mathbb{Q})$ is the $2$-Selmer group and $\sha(E_{n}/\mathbb{Q})[2]$
is the $2$-torsion of the Shafarevich-Tate group $\sha(E_{n}/\mathbb{Q})$
of $E_{n}/\mathbb{Q}$. The homogeneous space associated to any pair
$(m_{1},m_{2})\in\mathbb{Q}(S,2)\times\mathbb{Q}(S,2)$ is the curve
in $\mathbb{P}^{3}$ given by the equation 
\[
C_{(m_{1},m_{2})}:\begin{cases}
e_{2}-e_{1} & =\ n\,=m_{1}y_{1}^{2}-m_{2}y_{2}^{2},\\
e_{3}-e_{1} & =2n=m_{1}y_{1}^{2}-m_{1}m_{2}y_{3}^{2},
\end{cases}
\]
and we have the following isomorphism of finite groups: 
\[
S^{(2)}(E_{n}/\mathbb{Q})\cong\{(m_{1},m_{2})\in\mathbb{Q}(S,2)\times\mathbb{Q}(S,2):C_{(m_{1},m_{2})}(\mathbb{Q}_{v})\ne\emptyset,\forall v\in S\},
\]
with which the composition of $E_{n}(\mathbb{Q})/2E_{n}(\mathbb{Q})\to S^{(2)}(E_{n}/\mathbb{Q})$
gives the injective map $\kappa$ in (\ref{eq:kappa}). Since $\kappa(T_{1})=(2,-n)$,
$\kappa(T_{2})=(n,-1)$, and $\kappa(T_{3})=(2n,n)$, we have $E_{n}[2]\cong\{(1,1),(2,-n),(n,-1),(2n,n)\}\subset S^{(2)}(E_{n}/\mathbb{Q})$.
Hence, we have 
\[
\rank E_{n}(\mathbb{Q})=\dim_{\mathbb{F}_{2}}S^{(2)}(E_{n}/\mathbb{Q})-\dim_{\mathbb{F}_{2}}\sha(E_{n}/\mathbb{Q})[2]-2.
\]

If $m_{1}<0$, then $n=m_{1}y_{1}^{2}-m_{2}y_{2}^{2}$ has no solution
in $\mathbb{Q}_{\infty}$ for $m_{2}>0$, and similarly $2n=m_{1}y_{1}^{2}-m_{1}m_{2}y_{3}^{2}$
has no solution in $\mathbb{Q}_{\infty}$ for $m_{2}<0$. Therefore,
we have $(m_{1},m_{2})\notin S^{(2)}(E_{n}/\mathbb{Q})$ whenever
$m_{1}<0$. 

Theorem \ref{thm:criterionby2descent} says that a positive square-free
integer $n$ is reflecting congruent if and only if one and thus all
of $(1,-1),(2,n),(n,1),(2n,-n)$ lie in the image of $E_{n}(\mathbb{Q})/2E_{n}(\mathbb{Q})\to S^{(2)}(E_{n}/\mathbb{Q})$,
or the kernel of $S^{(2)}(E_{n}/\mathbb{Q})\to\sha(E_{n}/\mathbb{Q})[2]$.
This proves the following necessary condition on reflecting congruent
numbers. 
\begin{prop}
\label{prop:necessary condition} For a positive square-free integer
$n$ to be a reflecting congruent number, it is necessary that one
and thus all of $C_{(1,-1)},C_{(2,n)},C_{(n,1)},C_{(2n,-n)}$ have
a point in $\mathbb{Q}_{v}$ for any $v\in S$; It is also sufficient
if $\sha(E_{n}/\mathbb{Q})[2]$ is trivial. 
\end{prop}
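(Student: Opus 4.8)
The plan is to read off both directions from Theorem~\ref{thm:criterionby2descent} together with the standard identification of the $2$-Selmer group $S^{(2)}(E_n/\mathbb{Q})$ with the set of pairs $(m_1,m_2)\in\mathbb{Q}(S,2)\times\mathbb{Q}(S,2)$ for which the homogeneous space $C_{(m_1,m_2)}$ has a point in $\mathbb{Q}_v$ for every $v\in S$. Recall that the composite $E_n(\mathbb{Q})/2E_n(\mathbb{Q})\to S^{(2)}(E_n/\mathbb{Q})\hookrightarrow\mathbb{Q}(S,2)\times\mathbb{Q}(S,2)$ is precisely the map $\kappa$ of~(\ref{eq:kappa}), so the image of $\kappa$ is contained in $S^{(2)}(E_n/\mathbb{Q})$, with equality exactly when $\sha(E_n/\mathbb{Q})[2]$ vanishes, by the exactness of~(\ref{eq:SES}).

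For necessity, suppose $n$ is reflecting congruent. By Theorem~\ref{thm:criterionby2descent}, $(1,-1)$ lies in the image of $\kappa$, hence $(1,-1)\in S^{(2)}(E_n/\mathbb{Q})$, which under the identification above means that $C_{(1,-1)}$ has a $\mathbb{Q}_v$-point for all $v\in S$. The remaining three pairs $(2,n),(n,1),(2n,-n)$ form the coset $(1,-1)\kappa(E_n[2])$ of the subgroup $\kappa(E_n[2])=\{(1,1),(2,-n),(n,-1),(2n,n)\}\subset S^{(2)}(E_n/\mathbb{Q})$; since $S^{(2)}(E_n/\mathbb{Q})$ is a group containing both $(1,-1)$ and $\kappa(E_n[2])$, this entire coset lies in $S^{(2)}(E_n/\mathbb{Q})$, so $C_{(2,n)}$, $C_{(n,1)}$, and $C_{(2n,-n)}$ also have $\mathbb{Q}_v$-points for every $v\in S$.

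For sufficiency under the hypothesis $\sha(E_n/\mathbb{Q})[2]=0$, the sequence~(\ref{eq:SES}) shows that $\kappa$ carries $E_n(\mathbb{Q})/2E_n(\mathbb{Q})$ isomorphically onto $S^{(2)}(E_n/\mathbb{Q})$. If one, hence by the coset argument above all, of $C_{(1,-1)},C_{(2,n)},C_{(n,1)},C_{(2n,-n)}$ has a point in $\mathbb{Q}_v$ for each $v\in S$, then $(1,-1)\in S^{(2)}(E_n/\mathbb{Q})=\operatorname{im}(\kappa)$, and Theorem~\ref{thm:criterionby2descent} yields that $n$ is reflecting congruent.

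I expect no serious obstacle: everything is formal once one has Theorem~\ref{thm:criterionby2descent} and the descent exact sequence~(\ref{eq:SES}). The only point deserving care is that the Selmer condition involves only the places in $S$ — for $(m_1,m_2)$ with support in $S$, the curve $C_{(m_1,m_2)}$ has good reduction away from $S$ and therefore automatically has $\mathbb{Q}_v$-points there by Hensel's lemma — but this is exactly what is packaged into the stated isomorphism $S^{(2)}(E_n/\mathbb{Q})\cong\{(m_1,m_2):C_{(m_1,m_2)}(\mathbb{Q}_v)\ne\emptyset\ \forall v\in S\}$, so we may invoke it directly. One should also record that the ``one and thus all'' phrasing is justified precisely by the coset computation, so it suffices to test local solvability of a single $C_{(m_1,m_2)}$ among the four.
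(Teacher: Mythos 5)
Your proof is correct and follows the same route the paper takes: the paper derives the proposition directly from Theorem~\ref{thm:criterionby2descent}, the identification of $S^{(2)}(E_{n}/\mathbb{Q})$ with the locally solvable homogeneous spaces, and the exact sequence~(\ref{eq:SES}), exactly as you do. Your coset argument justifying ``one and thus all'' and the remark that equality of $\operatorname{Im}(\kappa)$ with the Selmer group holds precisely when $\sha(E_{n}/\mathbb{Q})[2]$ vanishes are the same observations the paper packages into the sentence preceding the proposition.
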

Note that this condition covers all the necessary conditions given
before. Indeed, $C_{(1,-1)}:n=y_{1}^{2}+y_{2}^{2},2n=y_{1}^{2}+y_{3}^{2}$
is the same as $n-t^{2}=u^{2},n+t^{2}=v^{2}$. If $-1$ is a quadratic
nonresidue modulo $n$, i.e., $n$ has a prime divisor $p\equiv3\mod4$,
then Lemma \ref{lem:-1QuadRes} essentially says that $C_{(1,-1)}(\mathbb{Q}_{p})$
is empty. If $n$ is even, then Lemma \ref{lem:EvenNoReflect} essentially
says that $C_{(1,-1)}(\mathbb{Q}_{2})$ is empty. Moreover, 
\begin{cor}
\label{cor:p1mod4:(1,-1)} If a positive square-free integer $n$
only has prime divisors congruent to $1$ modulo $4$, then $(1,-1)E_{n}[2]\subset S^{(2)}(E_{n}/\mathbb{Q})$. 
\end{cor}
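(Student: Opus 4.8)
The plan is to reduce the claim to local solvability of a single genus-one homogeneous space and then quote Lemma~\ref{lem:localsol}. Recall from the excerpt that $S^{(2)}(E_n/\mathbb{Q})$ is identified with the set of pairs $(m_1,m_2)\in\mathbb{Q}(S,2)\times\mathbb{Q}(S,2)$ such that $C_{(m_1,m_2)}(\mathbb{Q}_v)\neq\emptyset$ for all $v\in S$, and that $\kappa(E_n[2])=\{(1,1),(2,-n),(n,-1),(2n,n)\}$ is a subgroup of $S^{(2)}(E_n/\mathbb{Q})$. Since the four pairs $(1,-1),(2,n),(n,1),(2n,-n)$ are exactly the coset $(1,-1)\kappa(E_n[2])$, the assertion ``$(1,-1)E_n[2]\subset S^{(2)}(E_n/\mathbb{Q})$'' is equivalent to $(1,-1)\in S^{(2)}(E_n/\mathbb{Q})$, i.e.\ to $C_{(1,-1)}(\mathbb{Q}_v)\neq\emptyset$ for every $v\in S$; no condition at places outside $S$ is needed because $E_n$ has good reduction there.

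First I would make $C_{(1,-1)}$ explicit: with $m_1=1$, $m_2=-1$ its defining equations read $y_1^2+y_2^2=n$ and $y_1^2+y_3^2=2n$. Eliminating $y_1$, one checks that $C_{(1,-1)}(\mathbb{Q}_v)\neq\emptyset$ if and only if there exists $t\in\mathbb{Q}_v$ with $n-t^2$ and $n+t^2$ both squares in $\mathbb{Q}_v$: given such a $t$, take $y_2=t$ and let $y_1,y_3$ be square roots in $\mathbb{Q}_v$ of $n-t^2$ and $n+t^2$; conversely $t:=y_2$ works for any local point $(y_1,y_2,y_3)$, since then $n-t^2=y_1^2$ and $n+t^2=2n-y_1^2=y_3^2$.

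Now the only genuinely new point --- a bookkeeping observation rather than a real obstacle --- is that the hypothesis on $n$ forces $S$ to avoid the primes Lemma~\ref{lem:localsol} cannot treat. Indeed, $S$ is the set of prime divisors of $n$ together with $2$ and $\infty$; by assumption every prime divisor of $n$ is $\equiv1\bmod4$, so each $v\in S$ is $\infty$, or $2$, or a prime $\equiv1\bmod4$ --- precisely Cases~1, 2, and~3 of Lemma~\ref{lem:localsol}, and in particular the ``bad'' places of that lemma (primes $p\equiv3\bmod4$ at which $n$ is a quadratic non-residue) do not occur. Hence for each $v\in S$ the lemma furnishes $t\in\mathbb{Q}_v^*$ with $n\pm t^2\in\mathbb{Q}_v^{*2}$, so $C_{(1,-1)}(\mathbb{Q}_v)\neq\emptyset$. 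Therefore $(1,-1)\in S^{(2)}(E_n/\mathbb{Q})$, and multiplying by the subgroup $\kappa(E_n[2])$ yields $(1,-1)E_n[2]\subset S^{(2)}(E_n/\mathbb{Q})$.

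Since the analytic content is entirely contained in Lemma~\ref{lem:localsol}, which is already available, the corollary presents no real difficulty; the work is only in recognizing that the restriction ``all prime divisors $\equiv1\bmod4$'' is exactly what makes the set $S$ of places where local conditions must be checked lie within the scope of that lemma. (The degenerate case $n=1$, where $S=\{2,\infty\}$, can be dispatched directly, e.g.\ with $t=4$ over $\mathbb{Q}_2$ and $t=\tfrac12$ over $\mathbb{R}$.)
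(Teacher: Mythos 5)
Your proposal is correct and follows exactly the paper's route: the paper's proof is the single sentence that Lemma \ref{lem:localsol} gives $C_{(1,-1)}(\mathbb{Q}_v)\neq\emptyset$ for all $v\in S$, and you have simply written out the details that this one-liner leaves implicit (the reduction of the coset statement to $(1,-1)\in S^{(2)}(E_n/\mathbb{Q})$, the identification of $C_{(1,-1)}$ with the condition $n\pm t^2\in\mathbb{Q}_v^{*2}$, and the observation that the hypothesis on $n$ keeps every $v\in S$ within Cases 1--3 of the lemma). Your aside on the degenerate case $n=1$ is a small but legitimate extra care not present in the paper.
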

\begin{proof}
Indeed, Lemma \ref{lem:localsol} says that $C_{(1,-1)}(\mathbb{Q}_{v})$
is not empty for any $v\in S$. 
\end{proof}
\begin{example*}
Return to the example $n=205$. We have $(1,-1)\in S^{(2)}(E_{n}/\mathbb{Q})$
by Corollary \ref{cor:p1mod4:(1,-1)}. Moreover, we have the following
isomorphisms of groups: 
\[
E_{n}(\mathbb{Q})/2E_{n}(\mathbb{Q})\cong(\mathbb{Z}/2\mathbb{Z})^{3},\ S^{(2)}(E_{n}/\mathbb{Q})\cong(\mathbb{Z}/2\mathbb{Z})^{5},\ \sha(E_{n}/\mathbb{Q})[2]\cong(\mathbb{Z}/2\mathbb{Z})^{2},
\]
and in particular $\{(1,\pm1),(1,\pm41),(5,\pm1),(5,\pm41)\}$ gives
a complete set of eight representatives for $S^{(2)}(E_{n}/\mathbb{Q})/E_{n}[2]$.
Since $\kappa(245,2100)\in(1,-41)E_{n}[2]$, the map $\kappa$ has
image $E_{205}[2]\cup(1,-41)E_{205}[2]$, which does not contain $(1,-1)E_{205}[2]$.
\end{example*}
Next we calculate more $2$-Selmer groups for our main results. Here,
we run the routine calculations in detail because we need to know
not only the size of the $2$-Selmer groups but also their group elements.
\begin{lem}
\label{lem:2Selmer(p1mod4)} Let $p$ be a prime integer congruent
to $1$ modulo $4$. Then 
\[
S^{(2)}(E_{p}/\mathbb{Q})=\begin{cases}
(1,\pm1)E_{p}[2]\cup(1,\pm p)E_{p}[2]\cong(\mathbb{Z}/2\mathbb{Z})^{4}, & \text{if }p\equiv1\mod8,\\
(1,\pm1)E_{p}[2]\cong(\mathbb{Z}/2\mathbb{Z})^{3}, & \text{if }p\equiv5\mod8.
\end{cases}
\]
\end{lem}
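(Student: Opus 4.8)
The plan is to carry out an explicit complete 2-descent on $E_p: y^2 = x^3 - p^2 x$, following the setup already established in the excerpt. Recall that $S = \{2, p, \infty\}$, so $\mathbb{Q}(S,2)$ has representatives $\{\pm 1, \pm 2, \pm p, \pm 2p\}$ and hence $\mathbb{Q}(S,2)\times\mathbb{Q}(S,2)$ has $64$ elements; the task is to determine precisely which pairs $(m_1,m_2)$ land in the $2$-Selmer group, i.e., for which the homogeneous space $C_{(m_1,m_2)}$ has points over $\mathbb{Q}_v$ for every $v\in S$. First I would record the structural constraints that cut the list down cheaply: the remarks preceding the lemma already note that $m_1 < 0$ forces $C_{(m_1,m_2)}(\mathbb{Q}_\infty)=\emptyset$, eliminating half the pairs; the torsion points contribute $E_p[2] = \{(1,1),(2,-p),(p,-1),(2p,p)\}\subset S^{(2)}$ for free; and by Corollary \ref{cor:p1mod4:(1,-1)}, since $p\equiv 1\bmod 4$, the whole coset $(1,-1)E_p[2] = \{(1,-1),(2,p),(p,1),(2p,-p)\}$ also lies in $S^{(2)}$. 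That already gives the eight elements $(1,\pm1)E_p[2]$ claimed in both cases, so what remains is to decide the fate of the cosets represented by $m_1\in\{1,2,p,2p\}$ paired against the "new" second coordinates — concretely, whether any of $(1,p)$, $(p,-1)$-type new cosets such as $(1,p)E_p[2]=\{(1,p),(2,-1),(p,-p),(2p,1)\}$ survive, and to show all genuinely new cosets die when $p\equiv 5\bmod 8$ but the coset $(1,p)E_p[2]$ (and its partner) survive when $p\equiv 1\bmod 8$.

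The heart of the computation is therefore a $v$-by-$v$ solvability analysis of $C_{(m_1,m_2)}: n = m_1y_1^2 - m_2y_2^2,\ 2n = m_1y_1^2 - m_1m_2y_3^2$ for $n=p$ at $v = 2$ and $v = p$ (the archimedean place only rules out $m_1<0$, already used). At $v=p$: reducing mod $p$ and using that $\left(\tfrac{-1}{p}\right)=1$ while $\left(\tfrac{2}{p}\right) = (-1)^{(p^2-1)/8}$ equals $+1$ iff $p\equiv\pm1\bmod 8$, one computes which $m_i$ are squares or $2\times$squares in $\mathbb{Q}_p^*$ and checks the resulting conics/quadrics for $\mathbb{Q}_p$-points via Hensel's lemma; this is exactly where the congruence class of $p$ mod $8$ enters, because whether $2$ is a square mod $p$ controls solvability of the systems with $2$ appearing in $m_1$ or $m_2$. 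At $v=2$: one works in $\mathbb{Q}_2$, where squares are units $\equiv 1\bmod 8$ times $4^{\mathbb{Z}}$, and the outcome again splits on $p\bmod 8$; I expect that for $p\equiv 5\bmod 8$ the $2$-adic condition kills $(1,p)E_p[2]$ (consistent with the even-number obstruction philosophy of Lemma \ref{lem:EvenNoReflect}), while for $p\equiv 1\bmod 8$ it does not. One should also double-check the remaining cosets with $m_1\in\{2,p,2p\}$ and new second coordinate are all $\equiv$ one of the surviving cosets modulo $E_p[2]$, so no independent analysis is needed for them — this is just bookkeeping in $S^{(2)}/E_p[2]\subset\mathbb{Q}(S,2)^2/\kappa(E_p[2])$, a group of order at most $16$.

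Assembling the pieces: after the archimedean and torsion reductions, only the cosets of $(1,1)$, $(1,-1)$, and $(1,p)$ (together with $(1,-p)$, which equals $(1,p)\cdot(1,-1)$ in the quotient) can possibly be nontrivial-and-new, and the local analysis at $2$ and $p$ shows $(1,\pm p)E_p[2]\subset S^{(2)}$ exactly when $p\equiv 1\bmod 8$ and not when $p\equiv 5\bmod 8$; in the former case $S^{(2)}(E_p/\mathbb{Q}) = (1,\pm1)E_p[2]\cup(1,\pm p)E_p[2]\cong(\mathbb{Z}/2)^4$ and in the latter $S^{(2)}(E_p/\mathbb{Q}) = (1,\pm1)E_p[2]\cong(\mathbb{Z}/2)^3$, as claimed. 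The main obstacle is not conceptual but the sheer care needed in the $\mathbb{Q}_2$-solvability checks: the system $C_{(m_1,m_2)}$ is a pair of quadrics, $2$-adic square classes are delicate, and one must be scrupulous about when Hensel lifting applies versus when a genuine congruence obstruction mod $8$ (or mod $16$) appears; a sign error or a missed square class there would change the final group. I would organize this as a short table of the $\le 8$ surviving candidate cosets against $v\in\{2,p\}$, with a one-line Hensel or congruence argument in each cell, exactly paralleling the proof of Lemma \ref{lem:localsol}.
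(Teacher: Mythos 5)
Your plan follows the paper's proof essentially verbatim: cut down to the three coset representatives $(1,2),(1,p),(1,2p)$ using the archimedean constraint $m_1>0$, the torsion subgroup $E_p[2]$, and Corollary \ref{cor:p1mod4:(1,-1)}, then settle each by local solvability analysis at $v=2$ and $v=p$, with $\left(\frac{2}{p}\right)$ governing the split between $p\equiv1$ and $p\equiv5\bmod 8$. Two small corrections to your expectations, both of which your $v$-by-$v$ table would catch: for $p\equiv5\bmod8$ the coset $(1,p)E_p[2]$ is killed at the place $p$ (via $(2p,p)_p=\left(\frac{2}{p}\right)=-1$), not at $2$; and the cosets $(1,\pm2)E_p[2]$ and $(1,\pm2p)E_p[2]$ are not eliminated by the cheap archimedean/torsion reductions alone, so they too require a genuine local computation (at $v=2$ when $p\equiv1\bmod8$, at $v=p$ when $p\equiv5\bmod8$), exactly as the paper carries out.
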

\begin{proof}
We have $S=\{2,p,\infty\}$ and $\mathbb{Q}(S,2)\cong\{\pm1,\pm2,\pm p,\pm2p\}$.
Since $(m_{1},m_{2})\notin S^{(2)}(E_{p}/\mathbb{Q})$ whenever $m_{1}<0$,
$S^{(2)}(E_{p}/\mathbb{Q})$ is a subgroup of 
\[
S=\{(m_{1},m_{2})\in\mathbb{Q}(S,2)\times\mathbb{Q}(S,2)\mid m_{1}>0\}.
\]
Since $E_{p}[2]\cong\{(1,1),(2,-p),(p,-1),(2p,p)\}$ is a subgroup
of $S^{(2)}(E_{p}/\mathbb{Q})$, 
\[
\{(1,\pm1),(1,\pm2),(1,\pm p),(1,\pm2p)\}
\]
gives a complete set of eight representatives for $S/E_{p}[2]$. 
Since by Corollary \ref{cor:p1mod4:(1,-1)} $(1,-1)\in S^{(2)}(E_{n}/\mathbb{Q})$,
it suffices to check for $m_{2}=2,p,2p$ whether 
\[
C_{(1,m_{2})}:p=y_{1}^{2}-m_{2}y_{2}^{2},\quad2p=y_{1}^{2}-m_{2}y_{3}^{2}
\]
has a point in the local field $\mathbb{Q}_{v}$ for all $v=2,p$,
or equivalently whether 
\[
pZ^{2}+m_{2}Y_{2}^{2}=Y_{1}^{2},\quad2pZ^{2}+m_{2}Y_{3}^{2}=Y_{1}^{2},\quad Z\ne0
\]
has a solution in the ring $\mathbb{Z}_{v}$ of $v$-adic integers
for all $v=2,p$. 

Case 1: $p\equiv1\mod8$. In this case, $-1$ and $2$ are both squares
in $\mathbb{Z}_{p}$. Let $m_{2}=2$ and $Z,Y_{1},Y_{2},Y_{3}\in\mathbb{Z}_{2}$
be such that at least one of them has $2$-adic valuation $0$. Then
$2pZ^{2}+2Y_{3}^{2}=Y_{1}^{2}$ implies that $v_{2}(Y_{1})\ge1$ and
$pZ^{2}+2Y_{2}^{2}=Y_{1}^{2}$ implies that $v_{2}(Z)\ge1$. Then
$2Y_{2}^{2}=Y_{1}^{2}-pZ^{2}$ and $2Y_{3}^{2}=Y_{1}^{2}-2pZ^{2}$
imply that $v_{2}(Y_{2})\ge1$ and $v_{2}(Y_{3})\ge1$, a contradiction.
Let $m_{2}=2p$. Since $p$ is a square in $\mathbb{Z}_{2}$, this
reduces to the previous case. Let $m_{2}=p$. Then, we write $p=a^{2}+b^{2}$,
where $a,b\in\mathbb{Z}$ such that $2$ divides $a$ but not $b$.
Since $-2$ is a square in $\mathbb{Z}_{p}$, we have 
\[
p-2a^{2}\equiv-2a^{2}\not\equiv0\mod p,\quad p-2a^{2}\equiv1-2a^{2}\equiv1\mod8,
\]
which implies that $p-2a^{2}$ is a square in $\mathbb{Z}_{p}$ and
$\mathbb{Z}_{2}$. Let $c$ be any square root of $p-2a^{2}$ in $\mathbb{Z}_{p}$
(resp. $\mathbb{Z}_{2}$). Then, $pZ^{2}+pY_{2}^{2}=Y_{1}^{2}$ and
$2pZ^{2}+pY_{3}^{2}=Y_{1}^{2}$ have a solution $(Z,Y_{1},Y_{2},Y_{3})=(a,p,b,c)$
in $\mathbb{Z}_{p}$ (resp. $\mathbb{Z}_{2}$).  It follows that
$(1,p)\in S^{(2)}(E_{p}/\mathbb{Q})$ and $(1,2),(1,2p)\notin S^{(2)}(E_{p}/\mathbb{Q})$
and hence the assertion. 

Case 2: $p\equiv5\mod8$. In this case, $-1$ is a square but $2$
is not a square in $\mathbb{Q}_{p}$.  Then, the Hilbert symbols
$(p,2)_{p}=(p,2p)_{p}=-1$ imply that $pZ^{2}+2Y_{2}^{2}=Y_{1}^{2}$,
$pZ^{2}+2pY_{2}^{2}=Y_{1}^{2}$, $2pZ^{2}+pY_{3}^{2}=Y_{1}^{2}$ only
have trivial solutions in $\mathbb{Q}_{p}^{3}$. It follows that $(1,2)$,
$(1,2p)$, and $(1,p)$ are not in $S^{(2)}(E_{p}/\mathbb{Q})$, and
hence the assertion. 
\end{proof}
\begin{lem}
\label{lem:2Selmer:Lem5.3:Tian2012} Let $n$ be as in Theorem \ref{thm:TianThm1.3}.
 Then, we have 
\[
S^{(2)}(E_{n}/\mathbb{Q})=(1,\pm1)E_{n}[2]\cong(\mathbb{Z}/2\mathbb{Z})^{3}.
\]
\end{lem}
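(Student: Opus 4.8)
The plan is to prove the two inclusions $(1,\pm1)E_n[2]\subseteq S^{(2)}(E_n/\mathbb{Q})\subseteq(1,\pm1)E_n[2]$ separately. The first is essentially free: $E_n[2]\cong(\mathbb{Z}/2\mathbb{Z})^2$ always lies in the $2$-Selmer group; the class $(1,-1)$ is not in $E_n[2]$, since the only element of $\kappa(E_n[2])$ with trivial first coordinate is $(1,1)$ and $n>1$ is square-free so $-1$ is not a rational square; and by Corollary \ref{cor:p1mod4:(1,-1)} the hypothesis that every prime divisor of $n$ is $\equiv1\bmod4$ forces $(1,-1)\in S^{(2)}(E_n/\mathbb{Q})$. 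Hence $\dim_{\mathbb{F}_2}S^{(2)}(E_n/\mathbb{Q})\ge3$, and the entire content of the lemma is the reverse bound: every $(m_1,m_2)\in S^{(2)}(E_n/\mathbb{Q})$ already lies in $\langle E_n[2],(1,-1)\rangle$.

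For the upper bound I would analyze the local conditions cutting out the Selmer group, in the style of the proof of Lemma \ref{lem:2Selmer(p1mod4)} but with $n=p_0p_1\cdots p_k$ where $p_0\equiv5\bmod8$ and $p_1,\dots,p_k\equiv1\bmod8$, so that $S=\{\infty,2,p_0,\dots,p_k\}$ and $\mathbb{Q}(S,2)=\langle-1,2,p_0,\dots,p_k\rangle$. Given $(m_1,m_2)\in S^{(2)}(E_n/\mathbb{Q})$, first normalize modulo $E_n[2]=\{(1,1),(2,-n),(n,-1),(2n,n)\}$: multiplying by $(2,-n)$ makes $m_1$ odd, hence a square-free product $m_1=\prod_{i\in A}p_i$, and multiplying by $(n,-1)$ (which replaces $A$ by its complement and negates $m_2$) reduces to a canonical representative. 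Then, for each $v\in S$, I would write the condition $C_{(m_1,m_2)}(\mathbb{Q}_v)\ne\emptyset$ — the solvability over $\mathbb{Q}_v$ of the pair of conics $m_1y_1^2-m_2y_2^2=n$ and $m_1y_1^2-m_1m_2y_3^2=2n$ — as a product-of-Hilbert-symbols identity and unwind it into an $\mathbb{F}_2$-linear constraint on the exponent vectors of $m_1,m_2$. At $v=\infty$ this is just $m_1>0$, secured by the normalization; at $p_i$ with $i\ge1$ one uses $\left(\tfrac{-1}{p_i}\right)=\left(\tfrac{2}{p_i}\right)=1$; at $p_0$ one uses $\left(\tfrac{-1}{p_0}\right)=1$ but $\left(\tfrac{2}{p_0}\right)=-1$; and at $v=2$ one uses $n\equiv5\bmod8$ to pin down $m_1$ and $m_2$ modulo $8$. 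The constraints at the primes dividing $n$ are governed by the quadratic residue symbols $\left(\tfrac{p_i}{p_j}\right)$.

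Collecting these, the Selmer condition on the normalized $(m_1,m_2)$ becomes a homogeneous $\mathbb{F}_2$-linear system whose coefficient matrix, after removing the rows and columns accounted for by $E_n[2]$ and by the fixed solution $(1,-1)$, is the R\'edei matrix of the imaginary quadratic field $\mathbb{Q}(\sqrt{-n})$ (note $-n\equiv3\bmod8$, so the relevant discriminant is $-4n$ with prime divisors $2,p_0,\dots,p_k$, which is exactly why the single prime $p_0\equiv5\bmod8$ enters and why the field is $\mathbb{Q}(\sqrt{-n})$ and not $\mathbb{Q}(\sqrt{n})$). By the R\'edei--Reichardt theorem this matrix has maximal $\mathbb{F}_2$-rank precisely when the $4$-rank of $\mathrm{Cl}(\mathbb{Q}(\sqrt{-n}))$ vanishes, i.e.\ precisely when $\mathbb{Q}(\sqrt{-n})$ has no ideal class of exact order $4$ — the hypothesis of Theorem \ref{thm:TianThm1.3}. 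Maximal rank forces the solution space to be as small as possible, namely the span of the classes already known to be present, whence $S^{(2)}(E_n/\mathbb{Q})=\langle E_n[2],(1,-1)\rangle=(1,\pm1)E_n[2]\cong(\mathbb{Z}/2\mathbb{Z})^3$.

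The routine-but-delicate part, and the main obstacle, is the explicit local computation at $2$ and at $p_0$: getting the mod-$8$ residues and the signs right when $2$ is a non-square modulo $p_0$, and then matching the resulting coefficient matrix exactly with the R\'edei matrix of $\mathbb{Q}(\sqrt{-n})$ — including the bookkeeping check that the number of independent local conditions is exactly $\dim_{\mathbb{F}_2}\mathbb{Q}(S,2)^2-3$, so nothing is over- or under-counted (the two global Hilbert reciprocity relations, one per conic, account for the discrepancy between the naive and the actual count). A cross-check is available: one may instead run the two-step $2$-isogeny descent through $(0,0)\in E_n[2]$, compute the $\phi$- and $\widehat\phi$-Selmer groups — the hypothesis on ideal classes of order $4$ forces one of them to be minimal — and recombine; the two routes must give the same dimension $3$.
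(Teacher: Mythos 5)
Your lower bound is exactly the paper's: Corollary \ref{cor:p1mod4:(1,-1)} puts $(1,-1)$ in the Selmer group, and together with $E_n[2]$ this gives a subgroup of order $8$. For the upper bound, however, the paper does essentially nothing: it simply cites Lemma 5.3 of Tian \cite{Tian2012} for the statement $\dim_{\mathbb{F}_2}S^{(2)}(E_n/\mathbb{Q})=3$ and concludes by counting. What you propose instead is to reprove that dimension count from scratch, by normalizing $(m_1,m_2)$ modulo $E_n[2]$, translating the local solvability of $C_{(m_1,m_2)}$ at each $v\in S$ into $\mathbb{F}_2$-linear conditions, and identifying the resulting coefficient matrix with the R\'edei matrix of $\mathbb{Q}(\sqrt{-n})$ so that the hypothesis ``no ideal class of exact order $4$'' forces maximal rank via R\'edei--Reichardt. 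This is the right circle of ideas --- it is in substance how Tian's lemma is actually proved, and it correctly explains why the quadratic field $\mathbb{Q}(\sqrt{-n})$ (discriminant $-4n$, since $-n\equiv 3\bmod 4$) and the single prime $p_0\equiv 5\bmod 8$ enter. So your route is more self-contained than the paper's, which buys transparency at the cost of the computation the paper deliberately outsources.

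That said, as a standalone proof your write-up has a genuine gap exactly where you flag one: the local analysis at $v=2$ and $v=p_0$, the reciprocity bookkeeping showing the number of independent conditions is $\dim_{\mathbb{F}_2}\mathbb{Q}(S,2)^2-3$, and above all the identification of the Selmer matrix with the R\'edei matrix are asserted as a program, not carried out. These are not formalities: the whole content of the upper bound lives there, and small sign or mod-$8$ errors at $2$ and $p_0$ would change which classes survive. Until that matching is done explicitly (or Tian's Lemma 5.3 is invoked, as the paper does), the inclusion $S^{(2)}(E_n/\mathbb{Q})\subseteq(1,\pm1)E_n[2]$ is not established.
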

\begin{proof}
By Corollary \ref{cor:p1mod4:(1,-1)}, we have $(1,-1)\in S^{(2)}(E_{n}/\mathbb{Q})$.
By Lemma 5.3 of \cite{Tian2012}, $S^{(2)}(E_{n}/\mathbb{Q})\cong(\mathbb{Z}/2\mathbb{Z})^{3}$.
Hence, $S^{(2)}(E_{n}/\mathbb{Q})$ must be of the desired form. 
\end{proof}
 Now we present proofs of our main results of this work. 
\begin{proof}[Proof of Theorem \ref{thm:p5mod8}]
 By Proposition \ref{prop:necessary condition} and Corollary \ref{cor:p1mod4:(1,-1)},
it suffices to show that $\sha(E_{p}/\mathbb{Q})[2]$ is trivial.
By Theorem 3.6 of \cite{Monsky1990}, $E_{p}/\mathbb{Q}$ has positive
rank and thus $\dim_{\mathbb{F}_{2}}E_{p}(\mathbb{Q})/2E_{p}(\mathbb{Q})\ge3$.
By Lemma \ref{lem:2Selmer(p1mod4)}, 
\[
\dim_{\mathbb{F}_{2}}S^{(2)}(E_{p}/\mathbb{Q})=3=\dim_{\mathbb{F}_{2}}E_{p}(\mathbb{Q})/2E_{p}(\mathbb{Q})+\dim_{\mathbb{F}_{2}}\sha(E_{p}/\mathbb{Q})[2].
\]
 Hence, $E_{p}(\mathbb{Q})/2E_{p}(\mathbb{Q})\cong S^{(2)}(E_{p}/\mathbb{Q})$
and $\sha(E_{p}/\mathbb{Q})[2]$ is trivial. 
\end{proof}
\begin{example*}
The popular prime congruent number $n=157\equiv5\mod8$ due to Zagier
is reflecting congruent and $E_{157}$ has rank $1$. One can check
that 
\[
t=\frac{407598125202}{53156661805}\mapsto z=\frac{224403517704336969924557513090674863160948472041}{17824664537857719176051070357934327140032961660},
\]
and that $n\pm t^{2}$, $z^{2}\pm n$ are all rational squares. 
\end{example*}
\begin{rem*}
In this example, $z$ seems to be much more complicated than $t$
and thus we have a good reason to study reflecting congruent numbers.
 
\end{rem*}
Return to Conjecture \ref{conj:p1mod8}, by Proposition \ref{prop:necessary condition}
and Corollary \ref{cor:p1mod4:(1,-1)}, it suffices to show that $\sha(E_{p}/\mathbb{Q})[2]$
is trivial. Since $p\equiv1\mod8$ is congruent, by Lemma \ref{lem:2Selmer(p1mod4)},
\[
1\le\rank E_{p}(\mathbb{Q})+\dim_{\mathbb{F}_{2}}\sha(E_{p}/\mathbb{Q})[2]=\dim_{\mathbb{F}_{2}}S^{(2)}(E_{p}/\mathbb{Q})-2=2.
\]
Then, Theorem \ref{thm:p1mod8} is immediate. The parity conjecture
says that $(-1)^{E_{p}(\mathbb{Q})}$ is the global root number of
$E_{p}$ over $\mathbb{Q}$, which is $1$ since $p\equiv1\mod8$.
Thus, we have $\rank E_{p}(\mathbb{Q})\equiv0\mod2$. On the other
hand, if the Shafarevich-Tate group $\sha(E_{p}/\mathbb{Q})$ is finite,
then the order of $\sha(E_{p}/\mathbb{Q})[2]$ is a perfect square;
cf. \cite[Thm. 4.14 of Ch. X]{Silverman2009}. Thus, $\sha(E_{p}/\mathbb{Q})[2]\ne\mathbb{Z}/2\mathbb{Z}$
and $\rank E_{p}(\mathbb{Q})\ne1$. Hence, any one of the parity conjecture
and Shafarevich-Tate conjecture implies that $\rank E_{p}(\mathbb{Q})$
is exactly $2$ and $\sha(E_{p}/\mathbb{Q})[2]$ is trivial. It would
be interesting if one can construct a rational point on $E_{p}$ whose
image under $\kappa$ lies in $(1,-1)E_{p}[2]$ and thus prove Conjecture
\ref{conj:p1mod8} independently of any conjectures.
\begin{example*}
Return to the example $n=41$, the least prime congruent number in
the residue class $1$ modulo $8$. Since $E_{n}(\mathbb{Q})$ has
rank $2$, $n=41$ is reflecting congruent. Indeed, if $t=8/5$, then
$z(t)=1054721/81840\in\mathscr{Z}_{n}$ and 
\begin{align*}
n-t^{2} & =(31/5)^{2}, & z(t)^{2}-n & =(915329/81840)^{2},\\
n+t^{2} & =(33/5)^{2}, & z(t)^{2}+n & =(1177729/81840)^{2}.
\end{align*}
\end{example*}
\begin{rem*}
Therefore, for any prime number $p$, we summarize that 
\begin{enumerate}
\item if $p=2$, then $p$ is not congruent by the infinite descent method; 
\item if $p\equiv1\mod8$ is congruent, then $p$ is conjecturally reflecting
congruent;
\item if $p\equiv3\mod8$, then $p$ is not congruent by the infinite descent
method; 
\item if $p\equiv5\mod8$, then $p$ is reflecting congruent by Theorem
\ref{thm:p5mod8}; 
\item if $p\equiv7\mod8$, then $p$ is not reflecting congruent by Lemma
\ref{lem:-1QuadRes}.
\end{enumerate}
\end{rem*}
\begin{proof}[Proof of Theorems \ref{thm:TianThm1.1}, \ref{thm:TianThm5.2}, and
\ref{thm:TianThm1.3}.]
 We only prove Theorem \ref{thm:TianThm1.3} here. By Proposition
\ref{prop:necessary condition} and Corollary \ref{cor:p1mod4:(1,-1)},
it suffices to show that $\sha(E_{p}/\mathbb{Q})[2]$ is trivial,
which follows from the proof of Theorem 1.3 of \cite{Tian2012}. For
the proof of the other two theorems, see the proof of Theorem 1.1
and Theorem 5.3 of \cite{Tian2012}. 
\end{proof}

\section{\label{sec:gcd(k,m)>=00003D3}$(k,m)$-Reflecting Numbers with $\gcd(k,m)\ge3$}

The goal of this section is to disprove the existence of $(k,m)$-reflecting
numbers with $d=\gcd(k,m)\ge3$. Since the set $\mathscr{R}(k,m)$
of $(k,m)$-reflecting numbers is a subset $\mathscr{R}(d,d)$ of
$(d,d)$-reflecting numbers, it suffices to show that $(d,d)$-reflecting
numbers do not exist for any $d\ge3$. By Theorem \ref{prop:R(k,m)},
$\mathscr{R}(d,d)$ is not empty if and only if the ternary Diophantine
equation 
\begin{equation}
2T^{d}+U^{d}=V^{d}\label{eq:2Td+Ud=00003DVd}
\end{equation}
has an integer solution $(T,U,V)$ such that $-V^{d}<U^{d}<V^{d}$.
Therefore, Theorem \ref{thm:gcd(k,m)>=00003D3} follows immediately
from the following 
\begin{thm}
For any $d\ge3$, the ternary Diophantine equation (\ref{eq:2Td+Ud=00003DVd})
has no integer solution $(T,U,V)$ such that $-V^{d}<U^{d}<V^{d}$. 
\end{thm}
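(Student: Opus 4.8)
\emph{Plan.} I would cut the problem down to two cases --- $d$ an odd prime and $d=4$ --- using the filtration of reflecting numbers, then invoke the now-proven D\'enes conjecture in the first case and a classical descent of Fermat in the second. First note that if $d=e\ell$ and $(T,U,V)$ solves $(\ref{eq:2Td+Ud=00003DVd})$ with $-V^{d}<U^{d}<V^{d}$, then $(T^{\ell},U^{\ell},V^{\ell})$ solves the same equation for the exponent $e$ and satisfies $-(V^{\ell})^{e}<(U^{\ell})^{e}<(V^{\ell})^{e}$; equivalently $\mathscr{R}(d,d)\subset\mathscr{R}(e,e)$ whenever $e\mid d$. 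Since every $d\ge3$ has a divisor that is either an odd prime (if $d$ has an odd prime factor) or $4$ (if $d=2^{a}$ with $a\ge2$), it suffices to treat $d=p$ an odd prime and $d=4$.

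\emph{Odd prime exponent.} Suppose $p$ is an odd prime and $2T^{p}+U^{p}=V^{p}$ with $-V^{p}<U^{p}<V^{p}$. Since $p$ is odd this reads $V^{p}+(-U)^{p}=2T^{p}$, so $(V,-U,T)$ is an integer solution of $x^{p}+y^{p}=2z^{p}$. By the D\'enes conjecture --- a theorem of Euler for $p=3$ and of Ribet \cite{Ribet1997} and Darmon and Merel \cite{DarmonMerel1997} for $p\ge5$ --- one has $VUT=0$ or $|V|=|U|=|T|$. But the hypothesis forces $V^{p}>0$, hence $V\ne0$; if $U=0$ then $2T^{p}=V^{p}\ne0$, so $T\ne0$ and $2=(V/T)^{p}$, contradicting the irrationality of $\sqrt[p]{2}$; if $T=0$ then $U^{p}=V^{p}$, so $U=V$, contradicting $U^{p}<V^{p}$; and if $|V|=|U|=|T|$ then $U^{p}=\pm V^{p}$, again contradicting $-V^{p}<U^{p}<V^{p}$. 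So no such solution exists.

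\emph{The case $d=4$.} The hypothesis $-V^{4}<U^{4}<V^{4}$ reduces to $U^{4}<V^{4}$ with $V\ne0$, hence $T\ne0$; and $U=0$ is impossible since $2T^{4}=V^{4}$ would contradict the irrationality of $\sqrt[4]{2}$, so $U\ne0$. Dividing by $\gcd(U,V)$ I may take $U,V$ coprime, and then a parity check in $2T^{4}+U^{4}=V^{4}$ forces $U,V$ both odd and $T$ even. Now $V^{4}-U^{4}=2T^{4}$ succumbs to Fermat's method of infinite descent: from $(V^{2}-U^{2})(V^{2}+U^{2})=2T^{4}$ with $\gcd(V^{2}-U^{2},V^{2}+U^{2})=2$ and $(V^{2}+U^{2})/2$ odd, one splits off coprime fourth powers, and then the substitution $a=(V+U)/2$, $b=(V-U)/2$ leads to a nonzero integer solution of $X^{4}+Y^{4}=Z^{4}$, contradicting Fermat's theorem for exponent $4$ (cf.\ \cite{Dickson2005history}).

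\emph{The hard part.} The only genuinely deep ingredient is the resolution of the D\'enes conjecture for prime exponents (Ribet; Darmon and Merel); granted that, the rest is routine. The points that repay attention are verifying that neither degenerate alternative allowed by that theorem, $xyz=0$ or $|x|=|y|=|z|$, is compatible with the strict inequality $-V^{d}<U^{d}<V^{d}$ --- with the irrationality of $\sqrt[p]{2}$ disposing of the sub-case $U=0$ --- and carrying out the short classical descent for $d=4$, which is not of the form $x^{d}+y^{d}=2z^{d}$ and so is not covered by the reformulation used for odd exponents.
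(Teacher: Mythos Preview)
Your proof is correct and follows essentially the same strategy as the paper: reduce to the cases $d$ an odd prime and $d=4$, invoke the D\'enes theorem (Euler for $p=3$, Ribet and Darmon--Merel for $p\ge5$) for odd primes, and treat $d=4$ by a classical descent. The only notable difference is that for $d=4$ the paper simply cites Euler's result that $2T^{4}+U^{4}$ is never a nonzero square, whereas you sketch the descent yourself---your substitution $a=(V+U)/2$, $b=(V-U)/2$ yields $4ab(a^{2}+b^{2})=T^{4}$ with $a,b,a^{2}+b^{2}$ pairwise coprime, forcing $a=4\alpha^{4}$, $b=\beta^{4}$, $a^{2}+b^{2}=\gamma^{4}$ and hence $(2\alpha^{2})^{4}+(\beta^{2})^{4}=\gamma^{4}$, which is a valid reduction to Fermat's theorem for exponent~$4$; you are also somewhat more careful than the paper in explicitly ruling out the degenerate alternatives $TUV=0$ and $|T|=|U|=|V|$.
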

\begin{proof}
If $d=3$, then by the Theorem of Euler in Section \ref{sec:(k,m)},
$2T^{d}=V^{d}+(-U)^{d}$ is not soluble in the set of integers unless
$V=\pm U$. If $d=4$, then another theorem of Euler says that $2T^{d}+U^{d}$
is not a square for any integers $T$ and $U$ unless $T=0$; see
\cite[Ch. XIII, Thm. 210, page 411]{Euler1822elements}. Then, it
suffices to prove the assertion for any odd prime number $d\ge5$.
Then, we rewrite the equation (\ref{eq:2Td+Ud=00003DVd}) as $V^{d}+(-U)^{d}=2T^{d}$,
which, by the D\'enes conjecture, now a theorem, only has integer solutions
$(V,U,T)$ such that $VUT=0$ or $|V|=|U|=|T|$. So equation (\ref{eq:2Td+Ud=00003DVd})
has no integer solution $(T,U,V)$ such that $-V^{d}<U^{d}<V^{d}$. 
\end{proof}
\begin{rem*}
This theorem follows from the Lander, Parkin, and Selfridge conjecture
as well. If $d\ge5$ is odd, then $T,V$ are distinct positive integers.
Clearly, $U$ cannot be $0$. Also, $U$ must be negative; otherwise,
$T,V,U$ will be distinct positive integers and the conjecture implies
$d\le4$. We rewrite $2T^{d}+U^{d}=V^{d}$ as $2T^{d}=V^{d}+(-U)^{d}$.
Since $T,V,-U$ are distinct positive integers, the conjecture implies
that $d\le4$. If $d\ge6$ is even, then we may assume $T,U,V$ to
be distinct and positive. Then, the conjecture also implies that $d\le4$. 
\end{rem*}
\bibliographystyle{alpha}

\begin{thebibliography}{99}

\bibitem[D{\'e}n52]{Denes1952}
Peter D{\'e}nes.
\newblock \"{U}ber die {D}iophantische {G}leichung $x^l+y^l=cz^l$.
\newblock {\em Acta Mathematica}, 88:241--251, 1952.

\bibitem[Dic05]{Dickson2005history}
Leonard~E. Dickson.
\newblock {\em History of the Theory of Numbers, Volume II: Diophantine
  Analysis}.
\newblock Dover Books on Mathematics. Dover Publications, 2005.

\bibitem[DM97]{DarmonMerel1997}
Henri Darmon and Lo\"ic Merel.
\newblock Winding quotients and some variants of {F}ermat's {L}ast {T}heorem.
\newblock {\em Journal f\"ur die reine und angewandte Mathematik (Crelles
  Journal)}, 1997(490-491):81--100, 1997.

\bibitem[EHH{\etalchar{+}}22]{Euler1822elements}
L.~Euler, J.~Hewlett, F.~Horner, J.L. Lagrange, and J.~Bernoulli.
\newblock {\em Elements of Algebra}.
\newblock Longman, Orme, 1822.

\bibitem[Hee52]{Heegner1952}
Kurt Heegner.
\newblock Diophantische analysis und modulfunktionen.
\newblock {\em Mathematische zeitschrift}, 56:227--253, 1952.

\bibitem[Kob12]{Koblitz2012}
Neal~I. Koblitz.
\newblock {\em Introduction to Elliptic Curves and Modular Forms}.
\newblock Graduate Texts in Mathematics. Springer, New York, NY, 2nd edition,
  2012.

\bibitem[LPS67]{LanderParkinSelfridge1967}
L.J. Lander, T.R. Parkin, and John~L. Selfridge.
\newblock A survey of equal sums of like powers.
\newblock {\em Mathematics of Computation}, 21:446--459, 1967.

\bibitem[Mon90]{Monsky1990}
Paul Monsky.
\newblock Mock {H}eegner points and congruent numbers.
\newblock {\em Mathematische Zeitschrift}, 204(1):45--67, 1990.

\bibitem[Rib97]{Ribet1997}
Kenneth Ribet.
\newblock On the equation $a^p+2^\alpha b^p+c^p=0$.
\newblock {\em Acta Arithmetica}, 79:7--16, 1997.

\bibitem[Sat87]{Satge1987}
Philippe Satg{\'e}.
\newblock Un analogue du calcul de {H}eegner.
\newblock {\em Inventiones mathematicae}, 87(2):425--439, 1987.

\bibitem[Sil09]{Silverman2009}
J.H. Silverman.
\newblock {\em The Arithmetic of Elliptic Curves}.
\newblock Graduate Texts in Mathematics. Springer New York, 2009.

\bibitem[Ste75]{Stephens1975}
N.M. Stephens.
\newblock {Congruence properties of congruent numbers}.
\newblock {\em Bulletin of the London Mathematical Society}, 7(2):182--184, 07
  1975.

\bibitem[Tia12]{Tian2012}
Ye~Tian.
\newblock Congruent numbers and {H}eegner points.
\newblock {\em Cambridge Journal of Mathematics}, 2(1):117--161, Oct 2012.

\end{thebibliography}

\newcommand{\etalchar}[1]{$^{#1}$}

\end{document}